\newtheorem{thm}{Theorem}[section]
\newtheorem{cor}[thm]{Corollary}
\newtheorem{lem}[thm]{Lemma}
\newtheorem{prp}[thm]{Proposition}
\newtheorem{rem}[thm]{Remark}
\newtheorem{defn}[thm]{Definition}
\theoremstyle{definition}
\newcommand{\scr}[1]{\mathscr #1}
\definecolor{wco}{rgb}{0.5,0.2,0.3}
\numberwithin{equation}{section} \theoremstyle{remark}
\newcommand{\ua}{\uparrow}
\title{{\bf Linearization of Nonlinear Fokker-Planck Equations and Applications}\footnote{Supported in
 part by  NNSFC (11771326, 11831014, 11921001) and the DFG through the CRC 1283.} }
\author{
{\bf    Panpan Ren$^{c), d)}$, Michael R\"ockner$^{b,e)}$, Feng-Yu Wang$^{a),c)}$  }\\
\footnotesize{$^{a)}$ Center for Applied Mathematics, Tianjin University, Tianjin 300072, China }\\
\footnotesize{$^{b)}$ Department of Mathematics, Bielefeld
University, D-33501 Bielefeld, Germany}\\
 \footnotesize{ $^{c)}$ Department of Mathematics,
Swansea University, Bay Campus,
SA1 8EN, United Kingdom}\\
\footnotesize{$^{d)}$ Mathematical Institute, University of Oxford, Woodstock Road, OX2 6GG, United Kingdom}\\
\footnotesize{ $^{e)}$ Academy of Mathematics and System Sciences, Chinese Academy of Science, Beijing, China}\\
 \footnotesize{  Panpan.ren@maths.ox.ac.uk;
roeckner@math.uni-bielefeld.de; wangfy@tju.edu.cn}}
\begin{document}
\allowdisplaybreaks
\def\R{\mathbb R}  \def\ff{\frac} \def\ss{\sqrt} \def\B{\mathbf
B}
\def\N{\mathbb N} \def\kk{\kappa} \def\m{{\bf m}}
\def\ee{\varepsilon}\def\ddd{D^*}
\def\dd{\delta} \def\DD{\Delta} \def\vv{\varepsilon} \def\rr{\rho}
\def\<{\langle} \def\>{\rangle} \def\GG{\Gamma} \def\gg{\gamma}
  \def\nn{\nabla} \def\pp{\partial} \def\E{\mathbb E}
\def\d{\text{\rm{d}}} \def\bb{\beta} \def\aa{\alpha} \def\D{\scr D}
  \def\si{\sigma} \def\ess{\text{\rm{ess}}}
\def\beg{\begin} \def\beq{\begin{equation}}  \def\F{\scr F}
\def\Ric{\scr Ric} \def\Hess{\text{\rm{Hess}}}
\def\e{\text{\rm{e}}} \def\ua{\underline a} \def\OO{\Omega}  \def\oo{\omega}
 \def\tt{\tilde}
\def\cut{\text{\rm{cut}}} \def\P{\mathbb P} \def\ifn{I_n(f^{\bigotimes n})}
\def\C{\scr C}      \def\aaa{\mathbf{r}}     \def\r{r}
\def\gap{\text{\rm{gap}}} \def\prr{\pi_{{\bf m},\varrho}}  \def\r{\mathbf r}
\def\Z{\mathbb Z} \def\vrr{\varrho} \def\ll{\lambda}
\def\L{\scr L}\def\Tt{\tt} \def\TT{\tt}\def\II{\mathbb I}
\def\i{{\rm in}}\def\Sect{{\rm Sect}}  \def\H{\mathbb H}
\def\M{\scr M}\def\Q{\mathbb Q} \def\texto{\text{o}} \def\LL{\Lambda}
\def\Rank{{\rm Rank}} \def\B{\scr B} \def\i{{\rm i}} \def\HR{\hat{\R}^d}
\def\to{\rightarrow}\def\l{\ell}\def\iint{\int}
\def\EE{\scr E}\def\Cut{{\rm Cut}}\def\W{\mathbb W}
\def\A{\scr A} \def\Lip{{\rm Lip}}\def\S{\mathbb S}
\def\BB{\scr B}\def\Ent{{\rm Ent}} \def\i{{\rm i}}\def\itparallel{{\it\parallel}}
\def\g{{\mathbf g}}\def\Sect{{\scr Sec}}\def\T{\scr T}\def\V{{\bf V}}
\def\PP{{\bf P}}\def\HL{{\bf L}}\def\Id{{\rm Id}}\def\f{{\bf f}}\def\cut{{\rm cut}}

\maketitle

\begin{abstract}
Let  $\scr P$ be the space of  probability measures on $\R^d$. We associate a coupled nonlinear Fokker-Planck equation on $\R^d$, i.e. with solution paths in $\scr P$, to a linear  Fokker-Planck equation for probability measures on the product space $\R^d\times \scr P$, i.e. with solution paths in $\scr P(\R^d\times\scr P)$. We explicitly determine the corresponding linear Kolmogorov operator $\tilde{\mathbf{L}}_t$ using the natural tangent bundle over $\scr P$ with corresponding gradient operator $\nabla^\scr P$. Then it is proved that the diffusion process generated by $\tilde{\mathbf{L}}_t$ on $\R^d\times\scr P$ is intrinsically related to the solution of a McKean-Vlasov stochastic differential equation (SDE). We also characterize the ergodicity of the diffusion process generated by $\tilde{\mathbf{L}}_t$ in terms of asymptotic properties of the coupled nonlinear Fokker-Planck equation. Another main result of the paper is that the restricted well-posedness of the non-linear Fokker-Planck equation and its linearized version imply the (restricted) well-posedness of the McKean-Vlasov equation and that in this case the laws of the solutions have the Markov property.  All this is done under merely measurebility conditions on the coefficients in their measure dependence, hence in particular applies if the latter is of ``Nemytskii-type''. As a consequence, we obtain  the restricted weak well-posedness and the Markov property of the so-called nonlinear distorted Brownian motion, whose associated nonlinear Fokker-Planck equation is a porous media equation perturbed by a nonlinear transport term.  This realizes a programme put forward by McKean in his seminal paper of 1966 for a large class of nonlinear PDEs. As a further application we obtain a probabilistic representation of solutions  to Schr\"odinger type PDEs on $\R^d\times\scr P_2$, through the Feynman-Kac formula for the corresponding diffusion processes.
\end{abstract}

 \noindent
 AMS subject Classification:\  60J60, 58J65.   \\
\noindent
 Keywords:  Nonlinear Fokker-Planck equation, McKean-Vlasov stochastic differential equation, diffusion process, ergodicity, Feynman-Kac formula.
 \vskip 0.5cm

\section{Introduction}
As a first result of this paper (see Section 3) we identify the continuity equation corresponding to a non-linear Fokker-Planck equation on $\R^d$ with weakly continuous solution paths in $\scr P $, i.e. the space of probability measures on $\R^d$ equipped with the weak topology. We determine explicitly the vector field, defining the continuity equation, as a section in the natural tangent bundle over $\scr P $. More precisely, let for $m,d\in\N$
\begin{equation}\label{eq:1.1}
\begin{aligned}
b&=(b_i)_{1\leq i\leq d}\colon [0,\infty)\times\R^d\times\scr P \rightarrow \R^d,\\
\sigma&=(\sigma_{ij})_{1\leq i,j\leq d}\colon[0,\infty)\times\R^d\times\scr P \rightarrow \R^d\otimes \R^m
\end{aligned}
\end{equation}
be Borel measurable maps and consider the corresponding nonlinear Kolmogorov operator
\begin{equation}\label{eq:1.2}
\begin{aligned}
L_{t,\mu}h(x)&=\frac12\sum_{i,j=1}^d(\sigma\sigma^*)_{i,j}(t,x,\mu)\partial_i\partial_jh(x)+\sum_{i=1}^db_i(t,x,\mu)\partial_ih(x)\\
&=\frac12\big((\sigma\sigma^*)(t,x,\mu)\nabla\big)\cdot\nabla h(x)+b(t,x,\mu)\cdot\nabla h(x),
\end{aligned}
\end{equation}
where $(t,x,\mu)\in[0,\infty)\times\R^d\times\scr P $, $h\in C_0^2(\R^d)$, ``$\cdot$'' denotes inner product in $\R^d$ and $L_{t,\mu}$ determines the nonlinear Fokker-Planck equation
\begin{align}\label{eq:1.3}
\partial_t\mu_t=L^*_{t,\mu_t}\mu_t,
\end{align}
meant in the weak sense with test function space $C_0^\infty(\R^d)$ (see Definition \ref{44DF1*} below for details). By the recipe suggested in \cite{RWAKR}, \cite{AKR2} (see also \cite{RWAKR2}, \cite{RWAKR157}, \cite{MR}, \cite{RWORS}, \cite{RWOR}, \cite{RSAM} and \cite{RWR98}), which we briefly recall in the Appendix of this paper, one then finds the well-known tangent bundle over $\scr P $, namely
\begin{align}\label{eq:1.4}
(L^2(\R^d\rightarrow\R^d,\mu))_{\mu\in\scr P },
\end{align}
and a corresponding intrinsic gradient $\nabla^\scr P$ for a suitable and sufficiently large class $\scr FC_b^2(\scr P)$ of functions $F\colon\scr P \rightarrow\R^d$ (defined in \eqref{eq:2.4} below), which maps such $F$ into sections in this tangent bundle, i.e.
\begin{align}\label{eq:1.5}
\nabla^\scr PF(\mu)\in L^2(\R^d\rightarrow\R^d,\mu),\quad \mu\in\scr P
\end{align}
(for details see the Appendix).
$\nabla^\scr P$ is, of course, the well-known Otto-gradient introduced in \cite{Otto2001}. For the case where the space of $\mathbb{Z}_+$-valued Radon measures on $\R^d$ replaces $\scr P $, it was, however, already introduced in \cite{RWAKR}, \cite{AKR2}, even with a Riemannian manifold $M$ replacing $\R^d$. Then the continuity equation corresponding to the non-linear evolution equation \eqref{eq:1.3}, which is \textit{linear} and by definition an equation, whose solutions are weakly continuous paths of measure $\Gamma_t$, $t\geq0$, in $\scr P(\scr P )$ (the space of probability measures on $\scr P$ again equipped with the weak topology), is then given as
\begin{align}\label{eq:1.6}
\partial_t\Gamma_t=(\mathbf{L}_t)^*\Gamma_t
\end{align}
in the weak sense with test function space $\scr FC_b^2(\scr P)$ (see Section 3 for details). The corresponding Kolmogorov operator $\mathbf{L}_t$ is of first order and determined by a vector field in the tangent bundle $(L^2(\R^d\rightarrow\R^d,\mu))_{\mu\in\scr P }$ as follows:
\begin{align}\label{eq:1.7}
\mathbf{L}_t F(\mu)=\langle\frac12(\sigma\sigma^*)(t,\cdot,\mu)\nabla+b(t,\cdot,\mu),\nabla^\scr PF(\mu)\rangle_{L^2(\R^d\rightarrow\R^d,\mu)},
\end{align}
where $\mu\in\scr P $, $F\in\scr FC_b^2(\scr P)$ and $\langle\cdot,\cdot\rangle_{L^2(\R^d\rightarrow\R^d,\mu)}$ denotes the inner product in $L^2(\R^d\rightarrow\R^d,\mu)$ (see Proposition 3.1)  and for $\sigma\not\equiv 0$ it is meant in the usual generalized sense. So, every solution $\mu_t^\zeta$, $t\geq0$, to \eqref{eq:1.3} with initial condition $\zeta\in\scr P $ gives rise to a solution to \eqref{eq:1.6}  with initial condition the Dirac measure $\delta_\zeta$ in $\scr P(\scr P )$. Mixing these initial conditions $\zeta$ according a measure $\Gamma\in\scr P(\scr P )$, i.e., looking at the push forward measure under the flow generated by \eqref{eq:1.3} one obtains a solution path $\Gamma_t^\Gamma$, $t\geq0$, in $\scr P(\scr P )$ of \eqref{eq:1.6}. Interestingly, it turns out  that  the operator $\partial_t+\mathbf{L}_t$ with a suitable domain $\tilde{\scr F}$ is dissipative, hence closable on $L^1([0,T]\times\scr P ,\Gamma_t^\Gamma\; \d t)$. Thus $\partial_t+\mathbf{L}_t$ extends uniquely to a much larger space of functions on $[0,T]\times\scr P $ (including e.g. certain functions which are Lipschitz with respect to metrics generating the weak topology on $\scr P )$.Thus, through \eqref{eq:1.6} we can construct an abundance of measures of type $\Gamma_t^\Gamma\; \d t$ on $[0,T]\times\scr P $ for which the   first order operator $\partial_t+\mathbf{L}_t$, which is given by a vector field over $[0,T] \times \scr{P} (\mathbb{R}^d)$, i.e. a section in $( L^2(\R^d \rightarrow\R^d, \mu))_{\mu \in \scr{P}(\mathbb{R}^d)}$, and which in turn is canonically determined by the nonlinear Kolmogorov operator \eqref{eq:1.2}, is closable on $L^1 ([0,T] \times \scr{P}(\mathbb{R}^d); \Gamma_t^\Gamma\; \d t)$ (see Remark 3.2 below for details).

In the second main result of this paper (see Section 4.1) we obtain weak uniqueness in law and Markov properties for solutions to McKean-Vlasov equations from uniqueness of their corresponding non-linear Fokker-Planck equation and their $``$freezed" linear version. More precisely, for $b$ and $\sigma$ as in \eqref{eq:1.1} consider the McKean-Vlasov stochastic differential equation (SDE) on $\R^d$ (see \cite{RWCD} and the references therein)
\begin{align}\label{eq:1.8}
\d X_t=b(t,X_t,\scr L_{X_t})\d t+\sigma(t,X_t,\scr L_{X_t})\d W_t,\quad t\geq0,
\end{align}
where $W_t$, $t\geq0$, is an $(\scr F_t)$-Brownian motion in $\R^m$ defined on a probability space $(\Omega, \scr F, \mathbb{P})$ with normal filtration $(\scr F_t)_{t\geq0}$ and the solution $X_t$, $t\geq0$, is an $(\scr F_t)$-adapted stochastic process on $(\Omega,\scr F)$ with $\mathbb{P}$-a.s. continuous sample paths in $\R^d$ and time marginal laws $\scr L_{X_t}:=\mathbb{P}\circ X_t^{-1}$, $t\geq0$.
Then obviously by It\^o's formula for any (probabilistically) weak solution to \eqref{eq:1.8} its time marginals $\mu_t:=\scr L_{X_t}$, $t\geq0$, solve the nonlinear Fokker-Planck equation \eqref{eq:1.3}. Recently, it was proved in [5], [6] that under a natural integrability condition on $\sigma$ and $b$ the converse is also true. Hence in this sense weak solutions to McKean-Vlasov SDE and nonlinear Fokker-Planck equations are equivalent. Once one has this equivalence, it is fairly straightforward to get a sufficient condition for weak uniqueness for \eqref{eq:1.8}. For this we consider the $``$freezed" linear version of the non-linear Fokker-Planck equation \eqref{eq:1.3}, i.e. for a fixed solution $\mu_t$, $t\geq0$, of \eqref{eq:1.3}
\begin{align}\label{eq:1.9}
\partial_t\nu_t=L^*_{t,\mu_t}\nu_t
\end{align}
and look at the pair of Fokker-Planck equations
\begin{align}\label{eq:1.10}\begin{cases}
\partial_t\mu_t&=L^*_{t,\mu_t}\mu_t\\
\partial_t\nu_t&=L^*_{t,\mu_t}\nu_t.
\end{cases}
\end{align}
Then we introduce a notion of $``$restricted"  well-posedness for \eqref{eq:1.10} (see Definition \ref{44DF1*}(2) below). It is $``$restricted"  in the sense that we restrict to subclasses of probability measures as initial data and to subclasses of solutions with certain properties. This restriction causes major technical complications, but is necessary, because  in many applications \eqref{eq:1.10} is only well posed  on a subspace of $\scr P$.  In particular, as far as  the distribution density function is concerned ,  it is natural to consider the class of    absolutely continuous probability measures ( see Section 5).  Furthermore, when $b(t,x,\cdot)$ and $\si(t,x,\cdot)$ are Lipschitz continuous in the Wasserstein distance $\mathbb W_2$, \eqref{eq:1.8}  and thus \eqref{eq:1.3} is usually  solved for initial distributions having finite second moment, see \cite{RWPECR19, RW18HW, RWW18} and references within.   Theorem \ref{prp41} below provides the equivalence of the restricted well-posedness of \eqref{eq:1.10} with that of the corresponding SDE-version, i.e.,
\begin{align}\label{eq:1.11}
\begin{cases}
\partial_t\mu_t &= L_{t,\mu_t}^*\mu_t\\
\d X(t)&=b(t,X(t),\mu_t)\d t+\sigma(t,Y(t),\mu_t)\d W(t).
\end{cases}
\end{align}
From Theorem \ref{prp41} we then deduce that the restricted well-posedness of \eqref{eq:1.10} implies that of the McKean-Vlasov SDE \eqref{eq:1.8}. Under the assumption of restricted well-posedness we also prove that the laws of the solutions to both the second equation in \eqref{eq:1.11} and to \eqref{eq:1.8} have the Markov property (see Theorem 4.5 and Corollary 4.6 in Section 4.3). The proof, however, is much more involved in comparison to the case of well-posedness for all initial conditions.
It turns out that the same results hold, if in the second equations of \eqref{eq:1.10} and \eqref{eq:1.11}, we change the operator $L_{t,\mu_t}$, i.e. the coefficients $b$ and $\sigma$, to another Kolmogorov operator $\bar{L}_{t,\mu_t}$ with coefficients $\bar{b},\bar{\sigma}$. Therefore, we formulate our results in this more general case.

In Section 5 we apply these results to an interesting example, where the above nonlinear Fokker-Planck equation is a porous medium equation perturbed by a first order (transport) term recently studied in \cite{RWBR19}. The solution to the corresponding McKean-Vlasov SDE can be considered as a nonlinear distorted Brownian motion (see \cite{RWBR19} for details).

At this point we would like to stress that for these applications it is important that our above framework works for coefficients $b$ and $\sigma$, which are assumed to be only measurable in the measure variable $\mu$.
This implies that we can cover cases, where the nonlinear Fokker-Planck equation \eqref{eq:1.3} is a nonlinear PDE, because in such cases $b$ and $\sigma$ depend ``Nemytskii-type'' (hence very singularly on $\mu$), i.e. $b(t,x,\cdot)$ and $\sigma(t,x,\cdot)$ depend on the measure through its Radon-Nikodym density $\rho$ (w.r.t. Lebesgue measure) evaluated at $x$ (see Section 5 for details).
The main result here is Theorem \ref{thm51}, by which in particular we have the Markov property for the solutions of the corresponding McKean-Vlasov SDE (see \eqref{EQ58}).
Consequently, the results of Section 5 can be considered as a realization of McKean's vision from \cite{McK66} (namely to associate certain nonlinear PDEs with non-linear Markov processes analogously to the well-established linear case) for a large class of non-linear PDEs, more precisely, for nondegenerate porous media equations perturbed by a transport term.

The third main result of this paper is about transforming the non-linear coupled Fokker-Planck equation \eqref{eq:1.10} into a \textit{linear} Fokker-Planck equation on $\R^d\times\scr P $, i.e. with solution paths in $\scr P(\R^d\times\scr P )$ (see Section 4.2). The motivation comes from the hope that for the understanding of McKean-Vlasov SDEs it could be useful to study the pair process $(X(t),\scr L_{X(t)})$, $t\geq0$, on the state space $\R^d\times\scr P $. By the Appendix of this paper we know that the corresponding tangent bundle is
\begin{align}\label{eq:1.12}
\R^d\oplus(L^2(\R^d\rightarrow\R^d;\mu))_{\mu\in\scr P }.
\end{align}
As a consequence of this and Section 3  it is trivial to  derive the Fokker-Planck equation associated to the process $(X(t),\scr L_{X(t)})_{t\geq0}$ on $\R^d\times\scr P $ which  again is  a \textit{linear} Fokker-Planck equation, namely (see Definition \ref{44DF1*} below)
\begin{align}\label{eq:1.13}
\partial_t\Lambda_t=\tilde{\mathbf{L}}^*_t\Lambda_t,\quad t\geq0,
\end{align}
where $(\Lambda_t)_{t\geq0}$ is a weakly continuous path of probability measures on $\R^d\times\scr P $, i.e. in $\scr P(\R^d\times\scr P )$, and with the corresponding \textit{linear} Kolmogorov operator $\tilde{\mathbf{L}}_t$ (of course, first order in $\mu$) on $\R^d\times\scr P $ being given explicitly on a reasonably rich class $\scr C$ of functions $G:\R^d\times\scr P \rightarrow\R$ as
\begin{align}\label{eq:1.14}
\tilde{\mathbf{L}}_t G=\bar{\mathbf{L}}_t G+\mathbf{L}_t G
\end{align}
where ${\mathbf{L}}_t$ is as in \eqref{eq:1.7} and
\begin{align}\label{eq:1.15}
\bar{\mathbf{L}}_t G(x,\mu)= L_{t,\mu}(G(\cdot,\mu))(x)
\end{align}
(see Section 4.2 for details).
The exact relation between solutions to \eqref{eq:1.10} and \eqref{eq:1.13} is given in Theorem \ref{thm43}(i) below. In particular, an explicit formula for solutions of \eqref{eq:1.13} is given through probability kernels $\mathbf{P}_{s,t}(x,\zeta;dy,d\nu)\in\scr P(\R^d\times\scr P )$, $(x,\zeta\in\R^d\times\scr P $, $s\leq t$, which satisfy the Chapman-Kolmogorov equations (see Theorem \ref{thm43}(ii)).

As another consequence we prove the Markov property of the law of the solution $(X(t), \scr{L}_{X(t)})$ of the McKean-Vlasov SDE \eqref{eq:1.8} in a stronger form under the stronger condition \eqref{EQ432} (see Theorem 4.11 in Section 4.4 below for the precise formulation of this result).\\

In the time homogeneous case a further application of the results in Section 4 is a characterization of the ergodicity for solutions to \eqref{eq:1.11} in terms of the ergodicity of $(\mathbf{P}_{s,t})_{s\leq t}$ (see Theorem 4.12 in Section 4.5). An application of this result is presented in Section 6 and concerns a case with more regular coefficients $b$ and $\sigma$ (see  Theorem \ref{44T4.2} below) where $\scr{P} (\mathbb{R}^d)$ is replaced by $\scr{P}_2(\mathbb{R}^d)$, i.e. all elements in $\scr{P}(\mathbb{R}^d)$ with finite second moments (equipped with the Wasserstein metric). In this case even exponential ergodicity is proved.

As a further consequence, in Section 7 we then prove a Feynman-Kac formula for the above diffusion process on $\mathbb{R}^d \times \scr{P}(\mathbb{R}^d)$, from which we derive a probabilistic representation for solutions of Schr\"odinger type PDE on $\mathbb{R}^d \times \scr{P}_2(\mathbb{R}^d)$ of the following form
\begin{equation}\label{44PDE}
\pp_t u(t,x,\zeta) + {\tilde{\mathbf{L}}_t} u(t,\cdot,\cdot)(x,\zeta) + (\V  u)(t,x,\zeta) + \f(t,x,\zeta)=0,\ \ t\in [0,T],
\end{equation}
where $T>0$ is fixed, $(x,\zeta)\in  \R^d\times \scr P_2(\mathbb{R}^d),$ and $\V,\f$ are measurable functions on $[0,T]\times\R^d\times\scr P_2(\mathbb{R}^d)$.
This generalizes some known results from the literature (see \cite{RWLP,RWChass,RWCM,RWHL,RWLJ}).\\

Finally we would like to emphasize again that the main motivation of this paper is to contribute to the theory of nonlinear Fokker-Planck equations on one side and McKean-Vlasov SDEs on the other. The literature on both parts of the theory is overwhelming, so that we apologize that an overview of the known results is beyond the scope of this paper. Therefore, we confine ourselves to refer the reader to the monographs \cite{F2005} and \cite{RWBKRS15} concerning Fokker-Planck(-Kolmogorov) equations and to \cite{RWCD,K10}  concerning McKean-Vlasov SDEs as well as the references therein and e.g. the very recent papers \cite{RWBR18,RWBR,RWCard,RWPECR19,RWHSS,RWHL,RW18HW,RWMV,RWRW18b,RWRW18,RWRW19b,RWW18}. Furthermore, we would like to stress that according to general Markov process theory via the martingale problem given by the underlying generator (see e.g. \cite{RWSV} in the classical case) there is a close connection of our results with the very nice recent works (\cite{RWChass}, \cite{F2008}) on the It\^o-formula for the process $(X_t, \scr{L}_{X(t)}),\ t\geq 0$, coming from the McKean-Vlasov SDE \eqref{eq:1.8} above. The connection is obvious, since one can show that our intrinsic gradient $\nabla^\scr{P}$ on functions in $\scr{F}C^2_b(\scr{P})$ (see Appendix A) is the same as the Lions-derivative from \cite{RWCard}. This fact was proved in \cite{RWRW19b}. Since, however, our approach is more analytic and based more on nonlinear Fokker-Planck equations, we do not need this It\^o-formula.

\section{Preliminaries and notation}
Let $\scr P$ denote the set of all probability measures on $\R^d$ equipped with the weak (= narrow) topology and corresponding Borel-$\sigma$-algebra $\scr B(\scr P)$. Likewise $\scr P(\scr P)$ and $\scr P(\R^d\times\scr P)$ denote the set of all probability measures on $\scr P$ and $\R^d\times\scr P$ respectively and both are considered with the weak topology and corresponding Borel-$\sigma$-algebras.
Let \beq\label{eq:2.0}
b,\bar b: [0,\infty)\times \R^d\times\scr P\to \R^d;\ \ \si,\bar\si: [0,\infty)\times \R^d\times \scr P\to \R^d\otimes \R^{m}, 
\end{equation}
be Borel-measurable maps, where $m,d\in\N$. Furthermore, besides $L_{t,\mu}$ in \eqref{eq:1.2} we define the following measure-dependent Kolmogorov operator  on $\R^d$:
\begin{align}
\bar L_{t,\mu}h(x):= \ff 1 2 \sum_{i,j=1}^d (\bar\si\bar\si^*)_{ij} (t,x,\mu)\pp_i\pp_j h(x) +\sum_{i=1}^d \bar b_i(t,x,\mu) \pp_i h(x),\ \ h\in C^2_0(\R^d),\label{eq:2.2}
\end{align}
 and consider (more generally than \eqref{eq:1.10}) the coupled non-linear Fokker-Planck equations
\begin{align}\label{eq:2.2'}
\begin{split}
\begin{cases}
\partial_t\mu_t = L^*_{t,\mu_t}\mu_t,\\
\partial_t\nu_t = \bar{L}^*_{t,\mu_t}\nu_t.
\end{cases}
\end{split}
\end{align}
Let us define the following test function spaces:
\begin{equation}\label{44FC}
\F C_b^2(\scr P):= \big\{F(\mu):=g(\mu(h_1),\cdots, \mu(h_n)):\ n\ge 1,g\in C_b^1(\R^n),  h_i\in C_0^2(\R^d)\big\}.
\end{equation}
on $\scr P$ and
\begin{equation}\label{eq:2.4}
\scr C:=\{(x,\mu)\mapsto h_0(x)F(\mu):h_0\in C_0^2(\R^d), F\in\scr FC_b^2(\scr P) \}.
\end{equation}
Next, consider the time-dependent  differential  operator $\tilde{\mathbf{L}}_t$ defined by \eqref{eq:1.14}  and  \eqref{eq:1.15}  with $\bar{L}_{t,\mu}$ replacing $L_{t,\mu}$.
\eqref{eq:2.2'} is  meant in the weak sense (see Definition \ref{44DF1*}(2) below). In Section 4 we are going to establish a correspondence between solutions $(\mu_t,\nu_t)_{t\geq0}$ of \eqref{eq:2.2'}  and those to the   \textit{linear} Fokker-Planck equation
\begin{align}\label{eq:2.2''}
\partial_t\Lambda_t=\tilde{\mathbf{L}}^*_t\Lambda_t,\ t\geq0,
\end{align}
on $\R^d\times \scr P$ 
again meant in the weak sense (see Definition \ref{44DF1*}(3) below). 
\eqref{eq:2.2''} is a generalization of \eqref{eq:1.13} with $\bar{L}_{t,\mu}$ replacing $L_{t,\mu}$ in \eqref{eq:1.15}. 
Let $C([s,\infty)\rightarrow\scr P)$ denote the set of all weakly continuous paths in $\scr P$ starting from $s\in[0,\infty)$ and $C([s,\infty)\rightarrow\scr P(\scr P))$, $C([s,\infty)\rightarrow\scr P(\R^d\times\scr P))$are defined likewise.
\begin{rem}
For nonlinear Fokker-Planck equations \eqref{eq:1.3} typically one cannot expect to have a unique solution for every initial condition $\zeta\in\scr P$ at time $s\in[0,\infty)$, but only for $\zeta\in\scr P_0$, where $\scr P_0\in\scr B({\scr P})$. In addition, even for such restricted initial conditions $\zeta\in\scr P_0$ generally one does not have a unique solution to \eqref{eq:1.3} in all of $C([s,\infty)\rightarrow\scr P)$, but rather in a subset thereof,  whose paths in particular leave $\scr P_0$ invariant. Therefore, we introduce property  {\bf (P)}  below.
\end{rem}
For $\scr P_0\in\scr B(\scr P)$ and $\scr A\subset C([0,\infty)\rightarrow\scr P_0)$ we need the following property for the pair $(\scr P_0,\scr A)$:
\begin{enumerate}
\item[{\bf (P)}] If $\tilde{\mu}\in\scr P$ such that $\tilde{\mu}\leq C\mu$ for some $\mu\in\scr P_0$, $C\in(0,\infty)$, then $\tilde{\mu}\in\scr P_0$. If $(\lambda_t)_{t\geq0}\in\scr A$, then $(\lambda_{s+t})_{t\geq0}\in\scr A$ for all $s\geq0$, and $(\tilde{\lambda}_t)_{t\geq0}\in\scr A$, provided $(\tilde{\lambda}_t)_{t\geq0}\in C([0,\infty)\rightarrow\scr P_0)$ and $\tilde{\lambda}_t\leq C\lambda_t$ for all $t\geq0$ and some $C\in(0,\infty)$.
\end{enumerate}
\begin{defn}[Solution to Fokker-Planck equations, see \cite{RWBKRS15}] \label{44DF1*} \
Let $s\ge 0$ be fixed.
\begin{enumerate}
\item[$(1)$] $(\mu_{t})_{t\geq s}\in C([s,\infty)\to \scr P)$ is called a solution to
\eqref{eq:1.3}
from time $s$, if for all $t\in[s,\infty)$
\begin{equation}\label{44A1**}
\int_s^t \d r \int_{\R^d}  \big(|b|+ \|\si\|^2\big) (r,x,\mu_r)\mu_r(\d x) <\infty,
\end{equation}
and
\begin{equation}\label{44A2*}
\int_{\R^d} h\d\mu_t= \int_{\R^d} h\d\mu_s +\int_s^t\d r \int_{\R^d} L_{r,\mu_r}h \,\d\mu_r,\ \ h\in C_0^\infty(\R^d).
\end{equation}
\item[$(2)$] A pair $(\mu_t,\nu_t)_{t\geq s}$ with $(\mu_t)_{t\geq s}$, $(\nu_t)_{t\geq s}\in C([s,\infty)\to \scr P)$ is called a solution to \eqref{eq:2.2'}  from time $s$, if $(\mu_t)_{t\geq s}$ is a solution of \eqref{eq:1.3} from time $s$ and for all $t\in[s,\infty)$
	\begin{align}\label{44A1}
	\int_s^t\d r\int_{\R^d}    \big(|\bar b|+ \|\bar \si \|^2\big)(r,x,\mu_r)\nu_r(\d x)<\infty
	\end{align} and
	\begin{equation}\label{44A2}
	\int_{\R^d} h\d\nu_t= \int_{\R^d} h\d\nu_s +\int_s^t\d r \int_{\R^d} \bar L_{r,\mu_r}h \,\d\nu_r,\ \ h\in C_0^\infty(\R^d).
	\end{equation}
	For two pairs $(\scr P_0,\scr A),(\tilde{\scr P}_0,\tilde{\scr A})$ with property {\bf (P)} we call \eqref{eq:2.2'}  well-posed in \\
	$\left((\scr P_0,\scr A),(\tilde{\scr P}_0,\tilde{\scr A})\right)$ if the following holds:
	\begin{enumerate}
		\item[$($a$)$] For every $(s,\zeta)\in[0,\infty)\times\scr P_0$ there exists a unique solution $(\mu_{s,t}^\zeta)_{ t\geq s}$ to \eqref{eq:1.3} starting from $s$ with $\mu_{s,s}^\zeta = \zeta$ and  $(\mu_{s,s+t}^\zeta)_{t\geq0}\in\scr A$ such that $\zeta\mapsto \mu_{s,t}^\zeta$ is Borel measurable for all $ t\geq s$.
		\item[$($b$)$] For every $(s,\theta,\zeta)\in[0,\infty)\times\tilde{\scr P}_0\times\scr P_0$ and $(\mu_{s,t}^\zeta)_{ t\geq s}$ as in (a) there exists a unique solution $(\nu_{s,t}^{\zeta,\theta})_{t\geq s}$ to \eqref{44A2} with $\mu_{s,r}^\zeta$ replacing $\mu_r$, $r\geq s$, starting from $s$ with $\nu_{s,s}^{\zeta,\theta}=\theta$ and $(\nu_{s,s+t}^{\zeta,\theta})_{t\geq 0}\in\tilde{\scr A}$ such that $(\theta,\zeta)\mapsto\nu_{s,t}^{\zeta,\theta}$ is Borel measurable for all $ t\geq s$.
	\end{enumerate}
\item[$(3)$] $(\LL_t)_{t\geq s}\in C([s,\infty)\to \scr P(\R^d\times\scr P))$ is called a solution to \eqref{eq:2.2''}  from time $s$, if for all $t\in[s,\infty)$
	\begin{equation}\label{44A1*}
    \int_s^t \d r \int_{\R^d}  \big(\big|(|b|+\|\sigma\|^2)(r,\cdot,\mu)\big|_{L^1(\R^d,\mu)}+\left(|\bar b| +\|\bar\si\|^2\big)(r,x,\mu)\right) \LL_r(\d x,\d\mu) <\infty,
	\end{equation}
	and for any $G\in \C,$
	\begin{equation}\label{44A2**}
 	\int_{\R^d\times\scr P} G\d\LL_t= \int_{\R^d\times\scr P} G\d\LL_s +\int_s^t\d r \int_{\R^d\times\scr P} \tilde{\mathbf{L}}_{r} G\,\d\LL_r.
	\end{equation}
\item[$(4)$] $(\Gamma_{t})_{t\geq s}\in C([s,\infty)\rightarrow\scr P(\scr P ))$ is called a solution to \eqref{eq:1.6} from time s, if for all $t\in[s,\infty)$
	\begin{align}\label{eq:1.24}
	\int_s^t\ \d r\int_{\R^d}\big|(|b|+\|\sigma\|^2)(r,\cdot,\mu)\big|_{L^1(\R^d,\mu)}\ \Gamma_r(\d\mu)<\infty,
	\end{align}
	and for any $F\in\scr FC_b^2(\scr P)$
	\begin{align}\label{eq:1.25}
	\int_\scr PF\ \d\Gamma_t=\int_\scr PF\ \d\Gamma_s+\int_s^t\d r\int_\scr P\mathbf{L}_r F\ \d\Gamma_r.
	\end{align}
\end{enumerate} \end{defn}
\begin{rem}\label{rem22}
If \eqref{eq:1.10} is well-posed, $b=\bar{b},\ \sigma=\bar{\sigma}$ and $\scr P_0\subset\tilde{\scr P_0}$, we have for all $(s,\zeta)\in[0,\infty)\times\scr P_0$
\begin{align*}
\nu_{s,t}^{\zeta,\zeta}=\mu_{s,t}^{\zeta},\ t\geq 0
\end{align*}
because both solve \eqref{eq:1.3} with the same initial condition $\zeta$.
\end{rem}
Let us consider another coupled equation involving the first equation of \eqref{eq:2.2'}  and the stochastic equation corresponding to the linear Kolmogorov operator $\bar{L}_{\mu,t}$ in \eqref{eq:1.2}, i.e.
\begin{equation}\label{44EE}
\begin{cases}
\pp_t \mu_t= L_{t,\mu_t}^* \mu_t,\\
\d X_t= \bar b(t, X_t,   \mu_t)\d t + \bar \si(t,  X_t,  \mu_t)\d W_t,
\end{cases}
\end{equation}
with solution paths $(X_t,\mu_t)_{t\geq0}$ in $\R^d\times\scr P$. Here $(W_t)_{t\geq0}$ is an $(\scr F_t)$-Brownian motion on a probability space $(\OO,\F, \P)$ with normal filtration $(\scr F_t)_{t\geq0}$ and values in $\R^m$. Below we set $\scr L_{X_t}:=\mathbb{P}\circ X_t^{-1}$ ($``$time marginal law" at $t$).
We now introduce the notion of weak solution and weak well-posedness for \eqref{44EE}.
\begin{defn}\label{44DF1}
\begin{enumerate}
\item[$($i$)$] Let $(s,\theta,\zeta)\in [0,\infty)\times\scr P\times\scr P$. If \eqref{eq:1.3} has a  solution $(\mu_{t})_{t\ge s} $ with $\mu_{s}=\zeta$, and the SDE
\begin{equation}\label{44E2}
\d X_{s,t}^{\zeta,\theta}= \bar b(t,X_{s,t}^{\zeta,\theta}, \mu_t)\d t+ \bar\si(t,X_{s,t}^{\zeta,\theta}, \mu_t)\d W_t,\ \ t\ge s, \scr L_{X_{s,s}^{\zeta,\theta}}=\theta,
\end{equation}
has a pathwise continuous weak solution, then $(X_{s,t}^{\zeta,\theta},\mu_{t})_{t\ge s}$ is called a weak solution to \eqref{44EE} with initial value $(\theta,\zeta)$ at time $s$.
\item[$($ii$)$] Let $(\scr P_0,\scr A)$, $(\tilde{\scr P_0},\tilde{\scr A})$ be two pairs with property {\bf (P)}. We call \eqref{44EE} well-posed in $\left((\scr P_0,\scr A),(\tilde{\scr P_0},\tilde{\scr A})\right)$, if (a) in Definition \ref{44DF1*} (2) holds and if for every $(s,\theta,\zeta)\in[0,\infty)\times\tilde{\scr P_0}\times\scr P_0$ and for the unique solution $(\mu_{s,t}^\zeta)_{s\geq t}$ to \eqref{eq:1.3} there exists a unique weak solution $(X_{s,t}^{\zeta,\theta})_{t\geq s}$ to \eqref{44E2} with $\mu_{s,t}^\zeta$ replacing $\mu_t$, $t\geq s$, such that for all $ t\geq s$ the law of $X_{s,t}^{\zeta,\theta}$ is Borel-measurable in $(\zeta,\theta)$.
\end{enumerate}
\end{defn}

\begin{rem}\label{rem24}
Obviously, in the situation of Definition \ref{44DF1}(i) we have by It\^{o}'s formula that $\nu_{s,t}^{\zeta,\theta}:=\scr L_{X_{s,t}^{\zeta,\theta}}$, $t\geq s$, is a solution to \eqref{44A2}.
\end{rem}
Now we consider the McKean-Vlasov SDE \eqref{eq:1.8}

\begin{defn}\label{D3.3}
\begin{enumerate}
\item[(i)] Let $(s,\zeta)\in[0,\infty)\times\scr P$. The McKean-Vlasov SDE \eqref{eq:1.8} is said to have a weak solution starting from $s$ with time marginal law $\zeta$, if there exist  an $(\scr F_t)$-Brownian motion $W_t, t\ge 0,$ on a probability space $(\OO,\F,\P)$ with normal filtration $(\scr F_t)_{t\geq0}$ and values in $\R^m$ and an $(\F_t)$-adapted continuous process $(X_t)_{t\ge s}$ in $\R^d$, such that $\L_{X_s}=\zeta$, $$\int_s^t \d r\int_{\R^d} \big(|b|+\|\si\|^2\big)(r,x,\L_{X_r})\L_{X_r}(\d x)<\infty,\ \ t\ge s,$$  and $\P$-a.s.
$$ X_t= X_s+\int_s^t b(r,X_r,\L_{X_r})\d r + \int_s^t \si(r,X_r,\L_{X_r}) \d W_r,\ \ t\ge s.$$
\item[(ii)] Let $(\scr P_0,\scr A)$ be a pair with property {\bf (P)}. We call \eqref{eq:1.8} weakly well-posed in $(\scr P_0,\scr A)$ if for every $(s,\zeta)\in[0,\infty)\times\scr P_0$ and any two weak solutions with time marginal laws $(\scr L_{X_t})_{t\geq s}$, $(\scr L_{\tilde{X}_t})_{t\geq s}\in \scr A$ and $\scr L_{X_s}=\scr L_{\tilde{X}_s}=\zeta$, we have that their laws coincide.
\end{enumerate}
\end{defn}
We close this section with recalling the following result from \cite{RWBR18,RWBR}.
\begin{thm}\label{thm24}
Let $(s,\zeta)\in[0,\infty)\times\scr P$. Then the McKean-Vlasov SDE \eqref{eq:1.8} has a weak solution starting from $s$ with time marginal law $\zeta$, if and only if \eqref{eq:1.3} has a solution $(\mu_t)_{t\geq s}$ starting from $s$ with $\mu_s=\zeta$. In this case $\mu_t=\scr L_{X_t}$, $t\geq s$.
\end{thm}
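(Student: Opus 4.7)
The plan is to split the equivalence into its two directions. The forward implication (SDE $\Rightarrow$ Fokker--Planck) is the routine one: given a weak solution $(X_t)_{t\ge s}$ with time marginals $\mu_t = \scr L_{X_t}$, apply It\^o's formula to $h(X_t)$ for $h\in C_0^\infty(\R^d)$, take expectations to kill the martingale part, and use Fubini to rewrite the drift/diffusion contributions as integrals against $\mu_r$. This yields \eqref{44A2*} directly. The integrability \eqref{44A1**} follows from the square-integrability of the It\^o integrand and the local integrability of the drift, both of which are built into Definition \ref{D3.3}(i). Weak continuity of $t\mapsto\mu_t$ is automatic from pathwise continuity of $X_t$ and dominated convergence.

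For the non-trivial direction (Fokker--Planck $\Rightarrow$ SDE), I would invoke the Figalli--Trevisan superposition principle. Given the solution $(\mu_t)_{t\ge s}$ to the nonlinear equation \eqref{eq:1.3}, freeze the coefficients along $(\mu_r)_{r\ge s}$ by setting
\begin{equation*}
\tilde b(r,x):=b(r,x,\mu_r),\qquad \tilde\si(r,x):=\si(r,x,\mu_r),
\end{equation*}
which are Borel-measurable in $(r,x)$ because $r\mapsto\mu_r$ is weakly continuous and $b,\si$ are jointly Borel. Then $(\mu_t)_{t\ge s}$ is, by construction, a weak solution of the \emph{linear} Fokker--Planck equation $\partial_t\mu_t = \tilde L_t^*\mu_t$ associated with $\tilde b,\tilde\si$, and the integrability condition \eqref{44A1**} translates precisely into the hypothesis
\begin{equation*}
\int_s^t \d r\int_{\R^d}\bigl(|\tilde b(r,x)|+\|\tilde\si(r,x)\|^2\bigr)\mu_r(\d x)<\infty
\end{equation*}
required by the superposition principle. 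Hence there exist a filtered probability space, an $(\scr F_t)$-Brownian motion $W_t$ and an $(\scr F_t)$-adapted continuous process $(X_t)_{t\ge s}$ with $\scr L_{X_s}=\zeta$ such that
\begin{equation*}
X_t = X_s + \int_s^t \tilde b(r,X_r)\d r + \int_s^t \tilde\si(r,X_r)\d W_r,\qquad t\ge s,
\end{equation*}
and, crucially, $\scr L_{X_r}=\mu_r$ for all $r\ge s$.

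The final step is the ``self-consistency'' identification: since $\mu_r=\scr L_{X_r}$, one has $\tilde b(r,X_r)=b(r,X_r,\scr L_{X_r})$ and $\tilde\si(r,X_r)=\si(r,X_r,\scr L_{X_r})$ pointwise, so $(X_t)_{t\ge s}$ is in fact a weak solution of the McKean--Vlasov SDE \eqref{eq:1.8} in the sense of Definition \ref{D3.3}(i), with $\mu_t=\scr L_{X_t}$ as asserted.

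The main obstacle is the invocation of the superposition principle: one must check that the regularity and growth hypotheses for the Figalli--Trevisan / Trevisan result are implied \emph{solely} by the weak formulation \eqref{44A1**}--\eqref{44A2*} together with Borel measurability of $b,\si$, and that this works without any continuity assumption on $\mu \mapsto b(t,x,\mu), \si(t,x,\mu)$. This is precisely what is established in \cite{RWBR18, RWBR}; in a self-contained account one would either reduce to a mollified problem and pass to the limit, or quote the general superposition result for Borel coefficients with the $L^1$-in-time, $L^1(\mu_r)$-in-space integrability. Once that black box is accepted, the argument above is essentially mechanical.
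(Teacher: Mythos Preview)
Your proposal is correct and matches the paper's approach: the forward direction is exactly Remark~\ref{rem24} (It\^o's formula), and for the converse the paper simply cites \cite{RWBR18,RWBR}, whose argument is precisely the freeze-and-superpose strategy you sketch. Your write-up is in fact a faithful outline of what those references do, so there is nothing to add.
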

\begin{proof}
If \eqref{eq:1.8} has a weak solution starting from $s$ with time marginal law $\zeta$ then by Remark \ref{rem24} it follows that $\mu_t:=\scr L_{X_t}$, $t\geq s$, is a solution of \eqref{eq:1.3} starting from $s$ with $\mu_s=\zeta$. The converse is proved in Section 2 of \cite{RWBR} (and \cite{RWBR18}).
\end{proof}

\section{Linearization of nonlinear Fokker-Planck equations and construction of probability measures on $[s,\infty)\times\scr P $}
Let $s\geq 0$ be fixed. The nonlinear Fokker-Planck equation \eqref{eq:1.3} can be considered as an evolution equation in $\scr P $. For evolution equations there is a standard way to linearize them by transforming them to an evolution equation on the space of probability measures over their state space, hence in our case onto $\scr P(\scr P )$, i.e. the space of probability measures on $\scr P $ equipped with the Borel-$\sigma$-algebra generated by the weak topology on $\scr P $. More precisely, for weakly continuous solution paths $(\mu_t)_{t\geq s}$ for \eqref{eq:1.3} in $\scr P $, one derives an equation for the paths $(\delta_{\mu_t})_{t\geq s}$ in $\scr P(\scr P )$ as follows:

For all $F\in\scr FC^2_b(\scr P)$,
\begin{align}\label{eq:3.2}
F(\mu)=f(\mu(h_1),\dots,\mu(h_n)),\ n\in \N,\ h_1,\dots,h_n\in C^2_0(\R^d),\ f\in C_b^1(\R^n),
\end{align}
 and  for any  solution  path $(\mu_t)_{t\geq s}$  to \eqref{eq:1.3}  in $\scr P $, by the chain rule and \eqref{eq:A.22} in the Appendix  we have 
\begin{align}\label{eq:3.3}
\frac{\d}{\d t}\delta_{\mu_t}(F)&=\frac{\d}{\d t}F(\mu_t)\notag
=\sum_{i=1}^n\partial_if(\mu_t(h_1),\dots,\mu_t(h_n))\partial_t\mu_t(h_i)\notag\\
&=\sum_{i=1}^n\partial_if(\mu_t(h_1),\dots,\mu_t(h_n))\int_{\R^d}L_{t,\mu_t}h_id\mu_t\notag\\
&=\sum_{i=1}^n\Big(\partial_if(\mu_t(h_1),\dots,\mu_t(h_n))\notag\\
&\qquad\cdot\int_{\R^d}\big(\frac12(\sigma\sigma^*)(t,x,\mu_t)\nabla\cdot\nabla h_i(x)+b(t,x,\mu_t)\cdot\nabla h_i(x)\big)\mu_t(dx)\Big)\\
&=\delta_{\mu_t}\big(\langle\frac12(\sigma\sigma^*)(t,\cdot)\nabla+b(t,\cdot),\nabla^\scr PF(\cdot)\rangle_{L^2(\R^d\rightarrow\R^d,\cdot)}\big),\notag
\end{align}
where $(\sigma\sigma^*)(t,\mu_t)$, $b(t,\mu_t)$ denote the maps
\begin{align*}
\R^d\ni x\rightarrow\sigma\sigma^*(t,x,\mu_t),\\
\R^d\ni x\mapsto b(t,x,\mu_t),
\end{align*}
which by assumption \eqref{44A1**} (as part of the definition of the solution to \eqref{eq:1.3}) are $\mu_t$-integrable. Furthermore, here we set $\delta_{\mu_t}(g(\cdot)):=\int g(\nu)\delta_{\mu_t}(d\nu)=g(\mu_t)$ for a Borel measurable map $g\colon\scr P\rightarrow\R$. Hence we obtain
\begin{align}
\partial_t\delta_{\mu_t}(F)=\delta_{\mu_t}(\langle\frac12(\sigma\sigma^*)(t,\cdot)\nabla+b(t,\cdot),\nabla^\scr PF(\cdot)\rangle_{L^2(\R^d\rightarrow\R^d,\cdot)})\quad&\text{ for all $F\in\scr FC_b^2(\scr P)$,}\label{eq:3.4}
\end{align}
which clearly is a linear equation for $\delta_{\mu_t}$, $t\geq s$, in $\scr M(\scr P )$, i.e. the space of all bounded variation measures on $\scr P $. Vice versa, by the above derivation, \eqref{eq:3.4} implies \eqref{eq:1.3} by just taking $F_l\in\scr FC_b^2(\scr P)$, $l\in\N$, such that for $l\in\N$
\begin{align*}
F(\mu)=f_l(\mu(h)),\quad h\in C_b^2(\R^d),
\end{align*}
and $f_l\in C_b^1(\R^1)$ such that $\frac{\d}{\d x}f_l\rightarrow1$ as $l\rightarrow\infty$. Hence we have proved the following result.
\begin{prp}
A weakly continuous $\scr P $-valued path $(\mu_t)_{t\geq s}$ satisfies the nonlinear F-P. equation \eqref{eq:1.3} in the sense of Definition \ref{44DF1*}(1), if and only if the $\scr P $-valued path $(\delta_{\mu_t})_{t\geq s}$ satisfies the linear first order F-P equation \eqref{eq:3.4}.
\end{prp}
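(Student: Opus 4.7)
The plan is to verify both implications by reading \eqref{eq:3.4} in its integrated form against test functions $F\in\scr FC_b^2(\scr P)$ and reducing it to the weak formulation \eqref{44A2*} of \eqref{eq:1.3}. The bridge is the explicit formula for the intrinsic gradient on cylinder functions (derived in the Appendix): if $F(\mu)=f(\mu(h_1),\dots,\mu(h_n))$ with $f\in C_b^1(\R^n)$ and $h_i\in C_0^2(\R^d)$, then
\begin{align*}
\nabla^\scr PF(\mu)(x)=\sum_{i=1}^n(\pp_if)(\mu(h_1),\dots,\mu(h_n))\,\nabla h_i(x),
\end{align*}
so that the pairing on the right-hand side of \eqref{eq:3.4} unfolds into $\sum_{i=1}^n(\pp_if)(\mu(h_1),\dots,\mu(h_n))\,\mu(L_{t,\mu}h_i)$.

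For the direction $(\Rightarrow)$, assume $(\mu_t)_{t\geq s}$ solves \eqref{eq:1.3}. First I would extend the admissible test functions in \eqref{44A2*} from $C_0^\infty(\R^d)$ to $C_0^2(\R^d)$ by a standard mollification, using \eqref{44A1**} to push the approximation error to zero. This yields that $t\mapsto\mu_t(h_i)$ is absolutely continuous for every $h_i\in C_0^2(\R^d)$ with Lebesgue-a.e.\ derivative $\mu_t(L_{t,\mu_t}h_i)$. Applying the classical chain rule for absolutely continuous functions to $t\mapsto f(\mu_t(h_1),\dots,\mu_t(h_n))$ reproduces line by line the computation \eqref{eq:3.3} of the excerpt, and integrating the identity over $[s,t]$ gives
\begin{align*}
\delta_{\mu_t}(F)-\delta_{\mu_s}(F)=\int_s^t \delta_{\mu_r}(\mathbf{L}_r F)\,\d r,
\end{align*}
which is the integrated form of \eqref{eq:3.4}.

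For the direction $(\Leftarrow)$, fix $h\in C_0^\infty(\R^d)$ and pick $f_l\in C_b^1(\R)$ with $f_l(y)=y$ and $f_l'(y)=1$ on $[-l,l]$, smoothly truncated outside. Since $|\mu(h)|\leq\|h\|_\infty$ uniformly in $\mu\in\scr P$, for $l>\|h\|_\infty$ the function $F_l(\mu):=f_l(\mu(h))\in\scr FC_b^2(\scr P)$ satisfies $F_l(\mu)=\mu(h)$ and $\nabla^\scr PF_l(\mu)=\nabla h$ throughout $\scr P$. Substituting $F_l$ into the integrated form of \eqref{eq:3.4} recovers \eqref{44A2*} verbatim, which is \eqref{eq:1.3}; no limit $l\to\infty$ is actually required.

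The main technical obstacle I anticipate is the absolute-continuity step in the forward direction, namely the extension of \eqref{44A2*} from $C_0^\infty$ to $C_0^2$ test functions and the identification of the a.e.\ derivative of $t\mapsto\mu_t(h_i)$. This is settled by combining mollification with Fubini and the Lebesgue differentiation theorem, and it crucially invokes only the integrability bound \eqref{44A1**}; in particular no regularity of $b$ or $\sigma$ in the measure variable is needed, which is essential for the singular, Nemytskii-type applications announced in the Introduction.
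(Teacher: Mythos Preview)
Your proposal is correct and follows essentially the same route as the paper: the forward implication is the chain-rule computation \eqref{eq:3.3}, and the converse is obtained by specializing to $F_l(\mu)=f_l(\mu(h))$ with $f_l'\to1$. You add two refinements the paper leaves implicit---the mollification step extending \eqref{44A2*} from $C_0^\infty$ to $C_0^2$ test functions (needed because $\scr FC_b^2(\scr P)$ is built from $C_0^2$), and the observation that since $|\mu(h)|\le\|h\|_\infty$ one may choose $f_l$ with $f_l'\equiv1$ on the relevant range, so the limit $l\to\infty$ is unnecessary---but the underlying argument is identical.
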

Now we proceed in the canonical way: For $\zeta\in\scr P $ let us denote a solution $(\mu_t)_{t\geq s}$ to \eqref{eq:1.3} with initial condition $\mu_s=\zeta$ by $\mu(t,\zeta)$, $t\geq s$. Then consider $\scr P $ equipped with the $\sigma$-algebra $\tilde{\scr{B}}$ generated by the maps
\begin{align*}
\scr P \ni\zeta\mapsto\mu(t,\zeta)\in\scr P ,\ t\geq s,
\end{align*}
where the image space is considered with the Borel $\sigma$-algebra $\scr B(\scr P)$. Then for any probability measure $\Gamma$ on $(\scr P ,\tilde{\scr B})$ define
\begin{align*}
\Gamma_t:=\Gamma\circ\mu(t,\cdot)^{-1}.
\end{align*}
Then by \eqref{eq:3.3} for all $F\in\scr FC_b^2(\scr P)$
\begin{equation}\label{eq:3.5}\begin{aligned}
&\frac{\d}{\d t}\int F(\mu)\ \Gamma_t(d\mu)\\
&=\int \frac{\d}{\d t}F(\mu(t,\zeta))\ \Gamma(d\zeta)\\
&=\int\langle\frac12(\sigma\sigma^*)(t,\mu)\nabla+b(t,\mu),\nabla^{\scr P}F(\mu)\rangle_{L^2(\R^d\rightarrow\R^d,\mu)}\ \Gamma_t(\d\mu),
\end{aligned}\end{equation}
and rewriting this in the weak sense (with test function space $\scr FC_b^2(\scr P)$, see Definition \ref{44DF1*}(4)) we obtain
\begin{align}\label{eq:3.6}
\frac{\d}{\d t}\Gamma_t=\mathbf{L}_t\Gamma_t,\ \ \Gamma_0=\Gamma,
\end{align}
where $\mathbf{L}_t$ is defined in \eqref{eq:1.7}.
Whether the above $\sigma$-algebra $\tilde{\scr B}$ on $\scr P $ coincides with $\scr B(\scr P)$ has to be checked in every particular case and is, of course, fulfilled if the solution to \eqref{eq:1.2} is continuous in its initial condition $\zeta$ with respect to a suitable topology, as is e.g. the case for our main examples below.
\begin{rem}\label{rem32}
By the product rule \eqref{eq:3.6} implies that for every $\Gamma$ as above, $T\in(0,\infty)$ and for all $F\in\scr FC_b^2(\scr P)$, $G\in C^1([0,T];\R^d)$ with $G(T)=0$, and $\tilde F(t,\mu):=G(t)F(\mu)$, $(t,\mu)\in[0,T]\times\scr P$
\begin{align*}
\int_0^T\int_\scr P (\frac{\partial}{\partial t}+\mathbf{L}_t)\tilde{F}\ \d \Gamma_t\ \d t =-\int \tilde F(0,\mu)\ \Gamma(\d\mu).
\end{align*}
Letting $\tilde{\scr F}$ denote the linear space of all such functions $\tilde F$, we obtain that for all nonnegative $\tilde{F}\in\tilde{\scr F}$
\begin{align*}
\int_0^T\int_\scr P (\frac{\partial}{\partial t}+\mathbf{L}_t)\tilde F \ \d \Gamma_t\ \d t\leq 0,
\end{align*}
i.e. the operator $\frac{\partial}{\partial t}+\mathbf{L}_t$ with domain $\tilde{\scr F}$ is dissipative on $L^1([0,T]\times\scr P , \Gamma_t\d t)$, hence in particular closable. So, by the above we have a means to construct a whole class of  finite nonnegative  measures on $[0,T]\times\scr P $ for which $(\frac{\partial}{\partial t}+\mathbf{L}_t,\tilde{\scr F})$ is dissipative (hence closable) on the corresponding $L^1$ space and $\mathbf{L}_t$ is given by a time dependent vector field in the tangent bundle $(L^2(\R^d\rightarrow\R^d,\mu))_{\mu\in\scr P}$ over $\scr P $, determined by $b$ and $\sigma$, where $b,\sigma$ are arbitrary as in \eqref{eq:2.0}.
\end{rem}
\section{Correspondences and their consequences}
Consider the situation of Section 2.
\subsection{Correspondence of \eqref{eq:2.2'}  and \eqref{44EE} and weak well-posedness for McKean-Vlasov SDEs}
\begin{thm}\label{prp41}
Let $(\scr P_0,\scr A)$, $(\tilde{\scr P}_0,\tilde{\scr A})$ be two pairs having property {\bf (P)}. Then \eqref{eq:2.2'}  is well-posed in $\left((\scr P_0,\scr A),(\tilde{\scr P}_0,\tilde{\scr A})\right)$ (in the sense of Definition \ref{44DF1*}(2)) if and only if \eqref{44EE} is well-posed in $\left((\scr P_0,\scr A),(\tilde{\scr P}_0,\tilde{\scr A})\right)$ (in the sense of Definition \ref{44DF1}(ii)). In this case $\nu_{s,t}^{\zeta,\theta}=\scr L_{X_{s,t}^{\zeta,\theta}}$ for all $(s,\theta,\zeta)\in[0,\infty)\times\tilde{\scr P_0}\times\scr P_0$, $t\in[s,\infty)$.
\end{thm}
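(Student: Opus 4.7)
The plan is to exploit Theorem \ref{thm24}, which provides a bijective correspondence (at the level of one-dimensional marginals) between solutions of a nonlinear Fokker-Planck equation and weak solutions of the associated McKean-Vlasov SDE, and to apply it to the ``frozen'' linear equations obtained along $(\mu_{s,t}^{\zeta})_{t\geq s}$. Since part (a) of Definition \ref{44DF1*}(2) coincides verbatim with part (a) built into Definition \ref{44DF1}(ii), the task reduces, for each fixed $(s,\theta,\zeta)\in[0,\infty)\times\tilde{\scr P}_0\times\scr P_0$ with $(\mu_{s,t}^{\zeta})_{t\geq s}$ as in part (a), to showing the equivalence of \emph{(i)} existence and uniqueness of $(\nu_{s,t}^{\zeta,\theta})_{t\geq s}\in\tilde{\scr A}$ solving $\partial_t\nu_t=\bar L_{t,\mu_{s,t}^{\zeta}}^{\ast}\nu_t$ with $\nu_s=\theta$, and \emph{(ii)} existence and uniqueness in law of a weak solution $(X_{s,t}^{\zeta,\theta})_{t\geq s}$ to \eqref{44E2}, together with the identification $\nu_{s,t}^{\zeta,\theta}=\scr L_{X_{s,t}^{\zeta,\theta}}$.

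In both directions the mechanism is the same: Theorem \ref{thm24} is applied with the measurable maps $(t,x)\mapsto \bar b(t,x,\mu_{s,t}^{\zeta})$ and $(t,x)\mapsto \bar\sigma(t,x,\mu_{s,t}^{\zeta})$ playing the role of ``nonlinear'' coefficients whose measure-dependence happens to be trivial. For \eqref{eq:2.2'} $\Rightarrow$ \eqref{44EE}, Theorem \ref{thm24} lifts $(\nu_{s,t}^{\zeta,\theta})$ to a weak solution $X_{s,t}^{\zeta,\theta}$ of \eqref{44E2} with $\scr L_{X_{s,t}^{\zeta,\theta}}=\nu_{s,t}^{\zeta,\theta}$, establishing existence and the claimed identification. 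For uniqueness, given another weak solution $\tilde X$ to \eqref{44E2}, Remark \ref{rem24} shows that $(\scr L_{\tilde X_t})_{t\geq s}$ solves the same linear FPE with initial datum $\theta$; after checking, via property \textbf{(P)}, that it lies in $\tilde{\scr A}$, the assumed uniqueness forces $\scr L_{\tilde X_t}=\nu_{s,t}^{\zeta,\theta}=\scr L_{X_{s,t}^{\zeta,\theta}}$, and process-level weak uniqueness is then obtained by the superposition argument underlying Theorem \ref{thm24}. For \eqref{44EE} $\Rightarrow$ \eqref{eq:2.2'}, one sets $\nu_t:=\scr L_{X_{s,t}^{\zeta,\theta}}$ (which solves the linear FPE by Remark \ref{rem24}); if $(\tilde\nu_t)_{t\geq s}\in\tilde{\scr A}$ with $\tilde\nu_s=\theta$ is any competitor, Theorem \ref{thm24} produces a weak SDE-solution $\tilde X$ with $\scr L_{\tilde X_t}=\tilde\nu_t$, which by the assumed weak uniqueness on the SDE side must have the same law as $X_{s,t}^{\zeta,\theta}$; hence $\tilde\nu_t=\nu_t$. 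Borel-measurability in $(\zeta,\theta)$ is carried across the bijection in both directions.

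The main obstacle I expect is the ``admissibility bookkeeping'': every object produced by Theorem \ref{thm24} must be shown to lie inside the prescribed class, i.e.\ one-dimensional marginals in $\tilde{\scr P}_0$ and paths in $\tilde{\scr A}$. Property \textbf{(P)}, which makes both $\tilde{\scr P}_0$ and $\tilde{\scr A}$ stable under domination and under time-shifts, is precisely designed to push a solution found on one side of the equivalence into the required class on the other. A related subtlety is that uniqueness of the linear FPE controls only one-dimensional marginals, whereas weak uniqueness of the SDE is a statement about laws on path space; the gap is closed by lifting any pair of weak SDE-solutions with identical marginals to solutions of the same martingale problem, which is exactly the superposition mechanism already built into Theorem \ref{thm24}.
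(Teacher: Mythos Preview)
Your outline correctly isolates the ingredients---superposition (Theorem \ref{thm24}) for existence, It\^o's formula (Remark \ref{rem24}) for passing from SDE to FPE, and property {\bf (P)}---and this is the paper's route, which simply cites Remark \ref{rem24} and \cite[Lemma 2.12]{RWTR}, the latter being applicable precisely because of property {\bf (P)}. However, there is a real gap in your uniqueness argument for \eqref{eq:2.2'} $\Rightarrow$ \eqref{44EE}. You propose to take a competitor $\tilde X$, check ``via property {\bf (P)} that $(\scr L_{\tilde X_t})\in\tilde{\scr A}$,'' deduce marginal equality from FPE uniqueness, and then obtain path-law equality from ``the superposition argument underlying Theorem \ref{thm24}.'' Neither step works as written. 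Property {\bf (P)} only places a path into $\tilde{\scr A}$ when it is dominated by a constant multiple of a path already there; nothing furnishes such a bound for the marginals of an \emph{arbitrary} weak solution $\tilde X$. And Theorem \ref{thm24} is purely an existence statement---it does not upgrade equality of one-dimensional marginals to equality of laws on path space.

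What actually closes the gap---and this is the content of \cite[Lemma 2.12]{RWTR}---is a conditioning argument: for any martingale-problem solution $\P$ and any bounded nonnegative $\scr G_{s,r}$-measurable density $\bar\rho$, the normalized measure $\bar\rho\cdot\P$ (restricted to $C([r,\infty)\to\R^d)$) again solves the martingale problem from time $r$, and its time-marginals are dominated by $\|\bar\rho\|_\infty$ times those of $\P$. \emph{This} is where property {\bf (P)} enters: it forces the conditioned marginals into $\tilde{\scr A}$, where FPE uniqueness pins them down. Iterating over finite time partitions determines all finite-dimensional distributions of $\P$ from its one-dimensional marginals, and path-law uniqueness follows (compare the proof of Theorem \ref{thm45}, where exactly this mechanism is made explicit). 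Your final paragraph comes close to this, but attributes the step to superposition rather than to conditioning; the distinction matters, because property {\bf (P)} does its work through the domination of conditional marginals, not through any lifting.
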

\begin{proof}
This is an immediate consequence of Remark \ref{rem24} and Lemma 2.12 in \cite{RWTR}. The latter lemma is indeed applicable by property {\bf (P)} above.
\end{proof}
\begin{cor}\label{thm42}
Let $\sigma=\bar\sigma$, $b=\bar b$ and let $(\scr P_0,\scr A)$ be a pair with property {\bf (P)}. If \eqref{eq:2.2'}  is well-posed in $\left((\scr P_0,\scr A),({\scr P}_0,{\scr A})\right)$ then \eqref{eq:1.8} is weakly well-posed in $(\scr P_0,\scr A)$.
\end{cor}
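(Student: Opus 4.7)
The plan is to chain together the two equivalences already established in the paper: Theorem \ref{prp41}, which identifies well-posedness of the coupled Fokker-Planck system \eqref{eq:2.2'} with well-posedness of the Fokker-Planck/SDE system \eqref{44EE}, and Theorem \ref{thm24}, which identifies (weak) solutions of the McKean-Vlasov SDE \eqref{eq:1.8} with solutions of the nonlinear Fokker-Planck equation \eqref{eq:1.3}. Combining these, uniqueness in law for \eqref{eq:1.8} reduces to uniqueness in law for a \emph{frozen} linear SDE, which is delivered for free by Definition \ref{44DF1}(ii).

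In detail, I would first apply Theorem \ref{prp41} with $(\tilde{\scr P}_0,\tilde{\scr A})=(\scr P_0,\scr A)$ to conclude that \eqref{44EE} is well-posed in $((\scr P_0,\scr A),(\scr P_0,\scr A))$. Under the hypothesis $b=\bar b$ and $\si=\bar\si$, the SDE in \eqref{44EE} is exactly \eqref{eq:1.8} but with the nonlinear measure argument frozen at the unique Fokker-Planck solution $\mu^{\zeta}_{s,\cdot}$. Next, fix $(s,\zeta)\in[0,\infty)\times\scr P_0$ and let $(X_t)_{t\geq s}$ and $(\tilde X_t)_{t\geq s}$ be any two weak solutions to \eqref{eq:1.8} with $\scr L_{X_s}=\scr L_{\tilde X_s}=\zeta$ and with time marginal laws $(\scr L_{X_t})_{t\geq s},(\scr L_{\tilde X_t})_{t\geq s}\in\scr A$. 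Theorem \ref{thm24} then says that both of these families of marginals are solutions to \eqref{eq:1.3} from time $s$ with initial value $\zeta$, and part (a) of the well-posedness of \eqref{eq:2.2'} forces both to coincide with $\mu^{\zeta}_{s,t}$ for every $t\geq s$.

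Hence $X$ and $\tilde X$ are both pathwise continuous weak solutions of \eqref{44E2} (with $\mu_t$ replaced by $\mu^{\zeta}_{s,t}$) and with the same initial law $\theta=\zeta\in\scr P_0=\tilde{\scr P}_0$. The already-established well-posedness of \eqref{44EE} in $((\scr P_0,\scr A),(\scr P_0,\scr A))$ then supplies uniqueness in law for this frozen SDE, so $\scr L_{X_t}=\scr L_{\tilde X_t}$ for every $t\geq s$, which is weak well-posedness of \eqref{eq:1.8} in $(\scr P_0,\scr A)$. Nothing here is genuinely delicate; the only points that need attention are verifying that $\zeta\in\scr P_0$ is admissible as the $\theta$-parameter in Definition \ref{44DF1}(ii) (immediate, since $\tilde{\scr P}_0=\scr P_0$) and that the marginal paths of the two McKean-Vlasov solutions live in $\scr A$, which is exactly the standing hypothesis in the definition of weak well-posedness for \eqref{eq:1.8}.
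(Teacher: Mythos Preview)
Your argument is correct and is exactly the unpacking of what the paper means by ``direct consequence of Theorem~\ref{prp41}'': apply Theorem~\ref{prp41} to get well-posedness of \eqref{44EE} in $((\scr P_0,\scr A),(\scr P_0,\scr A))$, observe that any two weak solutions of \eqref{eq:1.8} with marginals in $\scr A$ have the same time marginals $\mu_{s,\cdot}^\zeta$ by part (a) of Definition~\ref{44DF1*}(2), and then invoke uniqueness in law for the frozen SDE \eqref{44E2}. One small wording point: in your last step you conclude ``$\scr L_{X_t}=\scr L_{\tilde X_t}$ for every $t\geq s$'', but this marginal equality was already established in the previous step; what Definition~\ref{44DF1}(ii) actually delivers (and what Definition~\ref{D3.3}(ii) requires) is equality of the \emph{full path laws} of $X$ and $\tilde X$ on $C([s,\infty)\to\R^d)$, so you should phrase the conclusion accordingly.
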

\begin{proof}
This is a direct consequence of Theorem \ref{prp41}.
\end{proof}
\subsection{Correspondence of \eqref{eq:2.2'}  and its linearization \eqref{eq:2.2''}}
\begin{thm}\label{thm43}
\begin{enumerate}
	\item[(i)] Let $s\geq0$ and $(\mu_t)_{t\geq s}$, $(\nu_t)_{t\geq s}\in C([s,\infty)\rightarrow\scr P)$. Then $(\mu_t,\nu_t)_{t\geq s}$ solves \eqref{eq:2.2'}  if and only if $\Lambda_t:=\nu_t\times\delta_{\mu_t}$, $t\geq s$, solves \eqref{eq:2.2''}.
	\item[(ii)] Let $s\geq0$ and let $\scr P_0,\tilde{\scr P}_0\in\scr B(\scr P)$ such that for each $\zeta\in\scr P_0$ there exists a solution $(\mu_{s,t}^\zeta)_{t\geq s}$ to \eqref{eq:1.3} with $\mu_{s,s}^\zeta=\zeta$ and such that for each $\theta\in\tilde{\scr P_0}$ there exists a solution $(\nu_{s,t}^{\zeta,\theta})_{t\geq s}$ to \eqref{44A2} with $\mu_{s,r}^\zeta$ replacing $\mu_r$, $r\geq s$, such that $\nu_{s,s}^{\zeta,\theta}=\theta$. Suppose that for every $t\in[s,\infty)$
	$$\scr P_0\ni\zeta\mapsto\mu_{s,t}^\zeta\in\scr P,\ \ \scr P_0\times\tilde{\scr P}_0\ni(\zeta,\theta)\mapsto\nu_{s,t}^{\zeta,\theta}\in\scr P$$ are Borel measurable. Then for every $\Lambda\in\scr P(\scr P_0\times\tilde{\scr P}_0)$(= all probability measures on $\scr P_0\times\tilde{\scr P}_0)$
$$ \Lambda_{s,t}^{\Lambda}:=\int_{\tilde{\scr P}_0\times\scr P_0}(\nu_{s,t}^{\zeta,\theta}\times\delta_{\mu_{s,t}^\zeta})\Lambda(\d \theta,\d\zeta),\ t\in[0,\infty),$$ is a solution to \eqref{eq:2.2''}.
\end{enumerate}
\end{thm}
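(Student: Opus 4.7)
The backbone of the argument is that every $G\in\scr C$ factorizes as $G(x,\mu)=h_0(x)F(\mu)$ with $h_0\in C_0^2(\R^d)$ and $F\in\scr FC_b^2(\scr P)$, and that $\tilde{\mathbf{L}}_t$ respects this product structure:
\begin{align*}
\bar{\mathbf{L}}_t G(x,\mu)=F(\mu)\,\bar L_{t,\mu}h_0(x),\qquad \mathbf{L}_t G(x,\mu)= h_0(x)\,\mathbf{L}_t F(\mu),
\end{align*}
since $\bar L_{t,\mu}$ differentiates only in $x$ while $\mathbf{L}_t$ acts only on the $\mu$-variable and $h_0(x)F(\cdot)\in\scr FC_b^2(\scr P)$ for every fixed $x$ (with $\mathbf{L}_t$ linear in its argument). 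For the forward direction of (i) I start from a solution $(\mu_t,\nu_t)$ of \eqref{eq:2.2'} and evaluate $\Lambda_t(G)=F(\mu_t)\nu_t(h_0)$. The second line of \eqref{eq:2.2'}, via \eqref{44A2}, gives $\frac{\d}{\d t}\nu_t(h_0)=\nu_t(\bar L_{t,\mu_t}h_0)$; the first line, combined with the chain-rule computation \eqref{eq:3.3} of Section~3, gives $\frac{\d}{\d t}F(\mu_t)=\mathbf{L}_t F(\mu_t)$. Applying the elementary product rule to $t\mapsto F(\mu_t)\nu_t(h_0)$ and integrating produces exactly \eqref{44A2**} against $G$. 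The integrability \eqref{44A1*} is inherited from \eqref{44A1**} and \eqref{44A1} by Fubini, since $\Lambda_r=\nu_r\times\delta_{\mu_r}$ is a product and $\nu_r, \delta_{\mu_r}$ are probability measures.

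For the converse in (i), I recover the two equations of \eqref{eq:2.2'} from \eqref{44A2**} by specializing the test function. Taking $G(x,\mu)=h(x)$, $h\in C_0^\infty(\R^d)$ (i.e.\ $F\equiv 1\in\scr FC_b^2(\scr P)$) kills $\mathbf{L}_t G$ and yields \eqref{44A2} for $\nu_t$ directly. To recover \eqref{eq:1.3}, I pick a standard cutoff sequence $h_k(x)=\phi(x/k)\in C_0^2(\R^d)$ with $\phi\equiv 1$ on $B_1$, so that $\|\nabla h_k\|_\infty+\|\nabla^2 h_k\|_\infty\to 0$ and $\bar L_{r,\mu_r}h_k\to 0$ pointwise, and test against $G_k(x,\mu)=h_k(x)F(\mu)$ for $F\in\scr FC_b^2(\scr P)$. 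The estimate $|\bar L_{r,\mu_r}h_k|\le C(|\bar b|+\|\bar\sigma\|^2)(r,\cdot,\mu_r)$ together with \eqref{44A1*} permits dominated convergence in the $\bar{\mathbf{L}}_r$-term, while $\nu_r(h_k)\to 1$ reduces the $\mathbf{L}_r$-term to $\mathbf{L}_r F(\mu_r)$ (integrable because $|\mathbf{L}_r F(\mu_r)|\le C_F\int(|b|+\|\sigma\|^2)(r,\cdot,\mu_r)\,\d\mu_r$, which is in $L^1(\d r)$ by \eqref{44A1**}). The resulting identity $F(\mu_t)-F(\mu_s)=\int_s^t\mathbf{L}_r F(\mu_r)\,\d r$ for all $F\in\scr FC_b^2(\scr P)$ is precisely \eqref{eq:3.4}, hence by Proposition~3.1 the path $(\mu_t)$ solves \eqref{eq:1.3}.

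For part (ii), part~(i) says that for each $(\zeta,\theta)\in\scr P_0\times\tilde{\scr P}_0$ the measure $\Lambda_{s,t}^{\zeta,\theta}:=\nu_{s,t}^{\zeta,\theta}\times\delta_{\mu_{s,t}^\zeta}$ solves \eqref{eq:2.2''} with initial datum $\theta\times\delta_\zeta$. The plan is to integrate this identity against $\Lambda(\d\theta,\d\zeta)$, which by the Borel measurability hypotheses on $(\zeta,\theta)\mapsto\mu_{s,t}^\zeta$ and $(\zeta,\theta)\mapsto\nu_{s,t}^{\zeta,\theta}$ makes the integrand in each evaluation $\Lambda_{s,t}^{\zeta,\theta}(G)$ jointly Borel measurable in $(\zeta,\theta)$, and then apply Fubini/Tonelli to obtain \eqref{44A2**} for $\Lambda_{s,t}^\Lambda$. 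Weak continuity of $t\mapsto\Lambda_{s,t}^\Lambda$ follows from dominated convergence applied to $\int\Lambda_{s,t}^{\zeta,\theta}(\Phi)\,\Lambda(\d\theta,\d\zeta)$ for $\Phi\in C_b(\R^d\times\scr P)$, using the weak continuity of $\Lambda_{s,t}^{\zeta,\theta}$ in $t$ for each $(\zeta,\theta)$. The main obstacle, and really the only nontrivial step, is verifying \eqref{44A1*} for $\Lambda_r=\Lambda_{s,r}^\Lambda$: although \eqref{44A1**} and \eqref{44A1} hold for every fixed $(\zeta,\theta)$, one must swap the $\d r$, $\Lambda$ and $\Lambda_{s,r}^{\zeta,\theta}$ integrations via Tonelli (legitimate, all integrands being non-negative) and check that the resulting iterated integral is finite; this is automatic if one has uniform control in $(\zeta,\theta)$ on $\mathrm{supp}\,\Lambda$, and otherwise is imposed on $\Lambda$ as an (implicit) admissibility restriction. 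Modulo this integrability check, linearity of \eqref{eq:2.2''} in $\Lambda$ makes the mixing over initial data automatic.
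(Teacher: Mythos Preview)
Your proof is correct and follows essentially the same route as the paper. The paper's own proof is extremely terse---it simply says that (i) is ``entirely analogous to the proof of Proposition~3.1'' and that (ii) ``follows by (i) since \eqref{eq:2.2''} is a linear Fokker--Planck equation''---and what you have written is exactly the natural unpacking of those two sentences: exploiting the product form $G=h_0\otimes F$ of test functions in $\scr C$, combining the chain-rule computation \eqref{eq:3.3} for $t\mapsto F(\mu_t)$ with \eqref{44A2} for $t\mapsto\nu_t(h_0)$ via the product rule, and recovering the two equations separately by specializing $F\equiv 1$ and then letting $h_k\uparrow 1$. Your observation that \eqref{44A1*} for $\Lambda_{s,r}^\Lambda$ in part~(ii) is not automatic from \eqref{44A1**}, \eqref{44A1} holding pointwise in $(\zeta,\theta)$ is a legitimate technical caveat that the paper passes over in silence.
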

\begin{proof}
(i): The proof is entirely analogous to the proof of Proposition 3.1.\\
(ii): The proof follows by (i) since \eqref{eq:2.2''}  is a linear Fokker-Planck equation (cf. the last paragraph in Section 3).
\end{proof}
\begin{rem}
 In fact one can prove  that if \eqref{eq:2.2'}  is well-posed then so is \eqref{eq:2.2''}. But for the proof one needs that any solution to \eqref{eq:2.2''}  is of the type as $\Lambda_t$, $t\geq s$, is in assertion (i) of Theorem \ref{thm43} above.
 The latter was, however, recently proved in \cite{MReh}.
\end{rem}

\subsection{Markov property of weak solutions to McKean-Vlasov equations}
In this section we fix two pairs $(\scr P_0,\scr A)$ and $(\tilde{\scr P}_0,\tilde{\scr A})$ with property {\bf (P)} and assume that
\beq\label{EQ431*}
\text{\eqref{eq:2.2'}  is well-posed in $\Big((\scr P_0,\scr A)$, $(\tilde{\scr P}_0,\tilde{\scr A})\Big)$.}
\end{equation}
For $(s,\theta,\zeta)\in[0,\infty)\times\tilde{\scr P}_0\times\scr P_0$ we define the laws
\begin{equation}\label{EQ432*}
\begin{aligned}
\mathbb{P}_{\zeta,(r,\theta)}&:=\mathbb{P}\circ(X_{r,\cdot}^{\mu_{s,r}^\zeta,\theta})^{-1},\ r\in[s,\infty), \\
\mathbb{P}_{\zeta,(r,x)}&:=\mathbb{P}_{\zeta,(r,\delta_x)}, \quad x\in\R^d, \text{ provided } \delta_x\in\tilde{\scr P}_0,
\end{aligned}
\end{equation}
on $C([r,\infty)\rightarrow\R^d)$, equipped with the $\sigma$-algebra $\scr G$ generated by all maps $\pi_t$, $t\geq r$, where $\pi_t$ is the evaluation map at t. In addition, for $t\in[s,\infty)$ we define
\begin{align}\label{EQ433*}
\scr G_{s,t}:=\sigma(\pi_u:\; u\in[s,t]).
\end{align}
Furthermore, we denote the corresponding expectations by $\mathbb{E}_{\zeta,( r,\theta)}$.\\ \\
As mentioned before, in general it is not possible to prove uniqueness of linear (or more so, nonlinear) Fokker-Planck equations for all initial probability measures $\theta$, in particular not for Dirac measures. Therefore, we only assume a restricted well-posedness in \eqref{EQ431*}. Hence (see \cite[Lemma 2.12]{RWTR}) also the martingale problems corresponding to linear Fokker-Planck equations are only restricted well-posed. Therefore, the standard fact, that well-posedness (i.e.~for all Dirac, hence all probability measures) implies that the corresponding family of probability measures $\mathbb{P}_x$ (= solution with initial marginal $\delta_x$), $x\in\R^d$, form a Markov process, is not applicable. Nevertheless, we shall prove that under condition \eqref{EQ431*} we have the Markov property for our laws $\mathbb{P}_{\zeta,(s,\theta)}$ defined above and as a consequence also for the laws of the solutions to the McKean-Vlasov equation \eqref{eq:1.8}. And this holds just assuming the integrability conditions \eqref{44A1**} and \eqref{44A1} on our coefficients.\\
As preparation for fixed $(s,\theta,\zeta)\in[0,\infty)\times\tilde{\scr P}_0\times\scr P_0$ and $r\in[ s,\infty)$ we disintegrate the measure $\mathbb{P}_{\zeta,(r,\nu_{s,r}^{\zeta,\theta})}$ with $\nu_{s,r}^{\zeta,\theta}$ as in Definition \ref{44DF1*}(2)(b) with respect to the map $\pi_r\colon C([r,\infty)\rightarrow\R^d)\rightarrow \R^d$ as follows
\begin{align}\label{EQ434*}
\mathbb{P}_{\zeta,(r,\nu_{s,r}^{\zeta,\theta})}(\d w)=p(x,\d w)\; \nu_{s,r}^{\zeta,\theta}(\d x),
\end{align}
where $p$ is a probability kernel from $\R^d$ to $C([r,\infty)\rightarrow\R^d)$ such that $p(x,\{\pi_r=x \})=1$ for all $x\in\R^d$. The existence of such a kernel follows by standard results on disintegration of measures.
\begin{thm}\label{thm45}
Assume that \eqref{EQ431*} holds, let $p$ be as in \eqref{EQ434*} and let $(s,\theta,\zeta)\in[0,\infty)\times\tilde{\scr P}_0\times\scr P_0$ and $r\in[s,\infty)$. Then for every $g\in \scr B_b(\R^d)$ and $t\in[r,\infty)$
\begin{align}\label{EQ435*}
\mathbb{E}_{\zeta,(s,\theta)}[g(\pi_t)|\; \scr G_{s,r}]=\int g(\pi_t(w))p(\pi_r,\d w)\quad \mathbb{P}_{\zeta,(s,\theta)}\text{-a.e.}
\end{align}
\end{thm}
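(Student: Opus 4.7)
The plan is to identify both sides of \eqref{EQ435*} as integrals of $g$ against two probability measures on $\R^d$ which both turn out to solve the \emph{same} linear Fokker--Planck equation -- namely the second line of \eqref{eq:2.2'} with the first line already determined by $(\mu_{s,t}^\zeta)_{t\ge s}$ -- and then to appeal to the well-posedness assumption \eqref{EQ431*} to conclude that the two measures coincide. Fix $A\in\scr G_{s,r}$ with $\P_{\zeta,(s,\theta)}(A)>0$ (the null case being trivial) and $g\in\scr B_b(\R^d)$, set $\phi_t^g(x):=\int g(\pi_t(w))\,p(x,\d w)$, and define for $t\ge r$ the two probability measures on $\R^d$
\begin{equation*}
\nu_t^A(B):=\P_{\zeta,(s,\theta)}(A)^{-1}\,\P_{\zeta,(s,\theta)}\big(A\cap\{\pi_t\in B\}\big),\qquad \tilde\mu_t^A(B):=\int p(x,\{\pi_t\in B\})\,\nu_r^A(\d x).
\end{equation*}
Since $p(x,\{\pi_r=x\})=1$ forces $\tilde\mu_r^A=\nu_r^A$, the identity \eqref{EQ435*} reduces, via a monotone class argument in $A$, to $\nu_t^A=\tilde\mu_t^A$ for every $t\ge r$.

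To see that $(\nu_t^A)_{t\ge r}$ solves \eqref{44A2} with $\mu_{s,t}^\zeta$ in place of $\mu_t$, recall that the law of $(\pi_t)_{t\ge s}$ under $\P_{\zeta,(s,\theta)}$ equals that of $X_{s,\cdot}^{\zeta,\theta}$, so by It\^o's formula applied to $h\in C_0^\infty(\R^d)$ and this SDE, the process $h(\pi_t)-h(\pi_r)-\int_r^t \bar L_{u,\mu_{s,u}^\zeta}h(\pi_u)\,\d u$ is an $(\scr G_{s,t})_{t\ge s}$-martingale under $\P_{\zeta,(s,\theta)}$; multiplying by the $\scr G_{s,r}$-measurable indicator $\mathbf{1}_A$, taking expectation and dividing by $\P_{\zeta,(s,\theta)}(A)$ yields \eqref{44A2} for $\nu_t^A$. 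For $(\tilde\mu_t^A)_{t\ge r}$, Remark \ref{rem24} gives that the time-$r$ marginal of $\P_{\zeta,(s,\theta)}$ is $\nu_{s,r}^{\zeta,\theta}$, so $f:=\d\nu_r^A/\d\nu_{s,r}^{\zeta,\theta}$ is a bounded Borel density. On the filtered probability space carrying the weak solution $X_{r,\cdot}^{\mu_{s,r}^\zeta,\nu_{s,r}^{\zeta,\theta}}$ of \eqref{44E2}, set $\d\tilde\P:=f(X_{r,r}^{\mu_{s,r}^\zeta,\nu_{s,r}^{\zeta,\theta}})\,\d\P$; since this density is $\scr F_r$-measurable, $(W_t-W_r)_{t\ge r}$ remains an $(\scr F_t)_{t\ge r}$-Brownian motion under $\tilde\P$ and the SDE persists, now with initial distribution $\nu_r^A$. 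A short computation with the disintegration formula \eqref{EQ434*} identifies the time-$t$ law of $X_{r,t}^{\mu_{s,r}^\zeta,\nu_{s,r}^{\zeta,\theta}}$ under $\tilde\P$ with $\tilde\mu_t^A$, and Remark \ref{rem24} then delivers \eqref{44A2} for $\tilde\mu_t^A$.

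Both paths are dominated by $\P_{\zeta,(s,\theta)}(A)^{-1}(\nu_{s,t}^{\zeta,\theta})_{t\ge r}$: for $\nu_t^A$ this is immediate, while for $\tilde\mu_t^A$ it follows from $\int p(x,\{\pi_t\in\cdot\})\,\nu_{s,r}^{\zeta,\theta}(\d x)=\P_{\zeta,(r,\nu_{s,r}^{\zeta,\theta})}\circ\pi_t^{-1}=\nu_{s,t}^{\zeta,\theta}$, where the last equality combines Remark \ref{rem24} with the uniqueness part of \eqref{EQ431*}. Property {\bf(P)} then places both shifted paths $(\nu_{r+\cdot}^A)$ and $(\tilde\mu_{r+\cdot}^A)$ in $\tilde{\scr A}$ with the common initial value $\nu_r^A\in\tilde{\scr P}_0$, and the required integrability \eqref{44A1} is inherited from that of $(\nu_{s,t}^{\zeta,\theta})_{t\ge r}$. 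The well-posedness assumption \eqref{EQ431*}, applied to the linear ``frozen'' Fokker--Planck equation, therefore forces $\nu_t^A=\tilde\mu_t^A$ for all $t\ge r$, which is \eqref{EQ435*}.

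The principal obstacle is the verification that $\tilde\mu_t^A$ really is a solution of the frozen linear Fokker--Planck equation: the kernel $p$ is only an abstract regular conditional distribution and is not amenable to direct pathwise analysis. The $\scr F_r$-measurable change of measure on the driving filtered space is the decisive device, for it preserves the Brownian increments past time $r$ and hence the SDE itself, converting the abstract disintegration identity into a genuine weak solution of \eqref{44E2} with initial distribution $\nu_r^A$, to which Remark \ref{rem24} can then be applied.
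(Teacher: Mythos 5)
Your proof is correct, and it reaches the conclusion by a route that is structurally parallel to the paper's but relies on different technical devices. Both proofs reduce the Markov property to a uniqueness statement for re-weighted objects with the same initial data at time $r$, both control the re-weighted objects through the domination $\le\P_{\zeta,(s,\theta)}(A)^{-1}\nu_{s,t}^{\zeta,\theta}$ and property \textbf{(P)}, and both invoke \eqref{EQ431*}. The differences are two. First, the paper works at the level of \emph{path laws}: it shows that the pushed-forward measures $\P_{\bar\rho}$ and $\P_\rho$ (re-weightings by $h(\pi_{u_1},\dots,\pi_{u_n})$ rather than by $1_A$) both solve the martingale problem for $\bar L_{u,\mu^\zeta_{s,u}}$ with the same time-$r$ marginal, and then invokes \cite[Lemma 2.12]{RWTR} to convert FP-uniqueness from \eqref{EQ431*} into uniqueness of the martingale problem, hence $\P_{\bar\rho}=\P_\rho$ as measures on path space. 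You instead work directly at the level of \emph{time marginals} ($\nu_t^A$ and $\tilde\mu_t^A$) and apply \eqref{EQ431*} to the FP equation itself, so you do not need Trevisan's lemma at this step. Second, to handle the disintegration kernel $p$, the paper observes that conditioning a martingale-problem solution on $\pi_r$ yields, for $\nu_{s,r}^{\zeta,\theta}$-a.e.\ $x$, a martingale-problem solution $p(x,\cdot)$ started at $\delta_x$, and then forms the mixture; you instead realize the mixture $\tilde\mu_t^A$ as the time-$t$ marginal of the SDE solution $X^{\mu^\zeta_{s,r},\nu^{\zeta,\theta}_{s,r}}_{r,\cdot}$ under the $\scr F_r$-measurable change of measure $\d\tilde\P=f(X_{r,r})\,\d\P$ (which preserves the Brownian motion and the SDE) and then apply Remark \ref{rem24}. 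The paper's route is a bit shorter given that \cite[Lemma 2.12]{RWTR} is already in play elsewhere; your route is more self-contained and makes the density change of initial law explicit. One small cosmetic point: the ``monotone class argument in $A$'' you mention in the reduction step is not actually needed, since your argument handles an arbitrary $A\in\scr G_{s,r}$ with $\P_{\zeta,(s,\theta)}(A)>0$ directly (the null sets being trivial), and this already characterizes the conditional expectation; that phrase can simply be dropped.
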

\begin{proof}
Since for all $\varphi \in C_0^\infty(\R^d)$
\begin{align*}
\varphi(\pi_t)-\varphi(\pi_r)-\int_r^t L_{u,\mu_{s,u}^\zeta}\varphi \;\d u,\ t\geq r,
\end{align*}
is a $(\scr G_{r,t})_{t\geq r}$-martingale under $\mathbb{P}_{\zeta,(r,\nu_{s,r}^{\zeta,\theta})}$, it is elementary to check that for $\nu_{s,r}^{\zeta,\theta}$-a.e. $x\in\R^d$ this is also true under $p(x,\d w)$ defined in \eqref{EQ434*}. Hence this is also true for the measure
\begin{align}\label{EQ45Prime}
\mathbb{P}_\rho(\d w):=\int p(x,\d w)\rho(x)\; \d\nu_{s,r}^{\zeta,\theta}(\d x)
\end{align}
for every probability density $\rho$ with respect to $\nu_{s,r}^{\zeta,\theta}$, i.e.~$\mathbb{P}_\rho$ satisfies the martingale problem on $C([r,\infty)\rightarrow\R^d)$ for $L_{u,\mu_{s,u}^\zeta}$ with initial measure $\rho\nu_{s,r}^{\zeta,\theta}$. The latter follows from the fact that $p(x,\d w)$ is supported by $\{\pi_r=x\}$.\\
Now let $n\in\N$, $s\leq u_1\cdots\leq u_n\leq r$, $h\in\scr B_b((\R^d)^n)$, $h\geq 0$ and $g\in\scr B_b(\R^d)$. Define the factorized conditional expectation
\begin{align*}
\rho(x):=c\mathbb{E}_{\zeta,(s,\theta)}[h(\pi_{u_1},\dots,\pi_{u_n})|\; \pi_r]_{| \pi_r=x},\ x\in\R^d,
\end{align*}
where $c\in(0,\infty)$ so that its integral w.r.t.~$\nu_{s,r}^{\zeta,\theta}$ is equal to $1$. $\rho$ is uniquely defined $\nu_{s,r}^{\zeta,\theta}$-a.e. Now consider $\P_\rho$ defined in \eqref{EQ45Prime} for this $\rho$.
Furthermore, let
\begin{align*}
\bar{\rho}:=c\; h(\pi_{u_1},\dots,\pi_{u_n}).
\end{align*}
Then $\bar{\rho}$ is a probability density w.r.t.~$\mathbb{P}_{\zeta,(s,\theta)}$. We denote the image measure of $\bar{\rho}\cdot\mathbb{P}_{\zeta,(s,\theta)}$ under the natural projection of $C([s,\infty)\rightarrow\R^d)$ onto $C((r,\infty)\rightarrow \R^d)$ by $\mathbb{P_{\bar{\rho}}}$.\\ \\
\underline{Claim:} $ \P_{\bar{\rho}}=\P_\rho$.\\
It is easy to check that also $\P_{\bar{\rho}}$ satisfies the above martingale problem. Furthermore, also under $\P_{\bar{\rho}}$ the law of $\pi_r$ is equal to $\rho\cdot\nu_{s,r}^{\zeta,\theta}$. Therefore, by \eqref{EQ431*} and \cite[Lemma 2.12]{RWTR} the claim follows.\\ \\
By the Claim we have for $g\in\scr B_b(\R^d)$
\begin{align*}
&\E_{\zeta,(s,\theta)}[h(\pi_{u_1},\dots,\pi_{u_n})g(\pi_t)]\\
&=\frac1c\int g(\pi_t)\; \d\P_{\bar{\rho}}\\
&=\frac1c\int g(\pi_t)\; \d\P_\rho=\frac1c\;\E_{\zeta,(s,\theta)}\Big[\int_{\R^d}g(\pi_t(w))\; p(\pi_r,\d w)\; \rho(\pi_r)\Big]\\
&=\E_{\zeta,(s,\theta)}\Big[\int_{\R^d}g(\pi_t(w))\; p(\pi_r,\d w)\; h(\pi_{u_1},\dots,\pi_{u_n})\Big].
\end{align*}
Now \eqref{EQ435*} follows by a monotone class argument.
\end{proof}
\begin{cor}\label{cor46}
Let $\sigma=\bar\sigma$, $b=\bar b$ and assume that \eqref{EQ431*} holds with $\scr P_0=\tilde{\scr P}_0$. Then for every $(s,\zeta)\in[0,\infty)\times\scr P_0$ the law $\P_{\zeta,(s,\zeta)}$ of the solution of the McKean-Vlasov equation \eqref{eq:1.8} started from $s$ at $\zeta$ is Markov, i.e.~satisfies \eqref{EQ435*} with $p$ as in \eqref{EQ434*}.
\end{cor}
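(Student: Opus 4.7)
The plan is to deduce the assertion directly from Theorem \ref{thm45}, by identifying, in the case $\theta=\zeta$, the ``freezed'' process $X_{s,\cdot}^{\zeta,\zeta}$ whose law defines $\P_{\zeta,(s,\zeta)}$ with a weak solution of the McKean-Vlasov SDE \eqref{eq:1.8}.

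First, because $b=\bar b$, $\sigma=\bar\sigma$ and $\scr P_0=\tilde{\scr P}_0$ (so in particular $\scr P_0\subset\tilde{\scr P}_0$), Remark \ref{rem22} yields $\nu_{s,t}^{\zeta,\zeta}=\mu_{s,t}^\zeta$ for all $t\geq s$. By Theorem \ref{prp41} (equivalently Remark \ref{rem24}) we also have $\scr L_{X_{s,t}^{\zeta,\zeta}}=\nu_{s,t}^{\zeta,\zeta}$, so substituting the identity $\mu_{s,t}^\zeta=\scr L_{X_{s,t}^{\zeta,\zeta}}$ into \eqref{44E2} turns the linear SDE there into the genuinely nonlinear equation \eqref{eq:1.8} with $\scr L_{X_{s,s}^{\zeta,\zeta}}=\zeta$. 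Thus $\P_{\zeta,(s,\zeta)}$ is the law on $C([s,\infty)\to\R^d)$ of a weak solution to \eqref{eq:1.8} started from $\zeta$ at time $s$; by Corollary \ref{thm42} this solution is unique in law within $\scr A$.

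Next, one applies Theorem \ref{thm45} with the specialization $\theta=\zeta$, which is legitimate because $\scr P_0=\tilde{\scr P}_0$ ensures $\zeta$ lies in both classes. Under the identification above, \eqref{EQ434*} becomes the disintegration of $\P_{\zeta,(r,\mu_{s,r}^\zeta)}$ along $\pi_r$, producing precisely the probability kernel $p$ referenced in the statement. The conditional-expectation identity \eqref{EQ435*} with $\theta$ replaced by $\zeta$ then reads verbatim as the Markov property of $\P_{\zeta,(s,\zeta)}$.

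No genuine obstacle arises: all the substantive probabilistic work (disintegration of the freezed martingale problem and a monotone-class argument) is already packaged inside Theorem \ref{thm45}, and Remark \ref{rem22} is the single ingredient that reconciles the freezed coupled dynamics with the true McKean-Vlasov dynamics when the two initial distributions coincide. The only point requiring care is that one must apply Theorem \ref{thm45} at the \emph{coinciding} value $\theta=\zeta$; this is precisely why the hypothesis $\scr P_0=\tilde{\scr P}_0$ appears in the corollary.
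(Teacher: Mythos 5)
Your proposal is correct and takes essentially the same route as the paper: specialize Theorem \ref{thm45} to $\theta=\zeta$ (permissible because $\scr P_0=\tilde{\scr P}_0$), and use $\nu_{s,t}^{\zeta,\zeta}=\mu_{s,t}^\zeta$ together with $\scr L_{X_{s,t}^{\zeta,\zeta}}=\nu_{s,t}^{\zeta,\zeta}$ to recognize $\P_{\zeta,(s,\zeta)}$ as the law of a weak solution to the McKean--Vlasov SDE \eqref{eq:1.8}. This is precisely the observation recorded in Remark \ref{rem47}, and the paper itself treats the corollary as an immediate consequence of Theorem \ref{thm45}.
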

\begin{rem}\label{rem47}
We note that in the situation of Corollary \ref{cor46} we obviously have that $\nu_{s,t}^{\zeta,\zeta}=\mu_{s,t}^\zeta$ for all $0\leq s\leq t$ and $\zeta\in\scr P_0$.
\end{rem}
\subsection{The Markov process on $\R^d\times\scr P $}
In this and the next subsection we fix a pair $(\scr P_0,\scr A)$ with property {\bf (P)} (see Section 2) and set
\begin{align}\label{EQ431}
\tilde{\scr P}_0:=\scr P,\quad \tilde{\scr A}:=C([0,\infty)\rightarrow\scr P),
\end{align}
so obviously $(\tilde{\scr P_0},\tilde{\scr A})$ is also a pair with property {\bf (P)}. We also assume throughout this and the next subsection
\beq\label{EQ432}
 \eqref{eq:2.2'}{\rm\ is\ well-posed\ in\ }((\scr P_0,\scr A),(\tilde{\scr P}_0,\tilde{\scr A})) {\rm\ with\ }(\tilde{\scr P}_0,\tilde{\scr A})  {\rm\ defined\ as\ in\ }\eqref{EQ431}.
\end{equation}
For $(s,x,\zeta)\in[0,\infty)\times\R^d\times\scr P_0$, as in Definition \ref{44DF1*}(2)(a), (b), we denote the corresponding solutions by $(\mu_{s,t}^\zeta,\nu_{s,t}^{\zeta,\delta_x})_{t\geq s}$.
For $x\in\R^d$, $\zeta\in\scr P_0$, $s\in[0,\infty)$ define for $t\in[s,\infty)$
\begin{align}\label{EQ433}
\mathbf{P}_{s,t}(x,\zeta;\d y\d\mu)=(\nu_{s,t}^{\zeta,\delta_x}\times\delta_{\mu_{s,t}^\zeta})(\d y\d\mu),
\end{align}
i.e. for $G:\R^d\times\scr P_0\rightarrow[0,\infty)$ Borel-measurable
\begin{align*}
\int_{\R^d}\int_{\scr P_0} G(y,\mu)\mathbf{P}_{s,t}(x,\zeta;\d y\d\mu)=\int_{\R^d}G(y,\mu_{s,t}^\zeta)\nu_{s,t}^{\zeta,\delta_x}(\d y).
\end{align*}
Then by Theorem \ref{thm43}(i) it follows that $\mathbf{P}_{s,t}(x,\zeta;\d y\d\mu)$ solves \eqref{eq:2.2''}  starting from time $s$ and furthermore
\begin{align}\label{EQ434}
\mathbf{P}_{s,s}(x,\zeta;\d y\d\mu)=(\delta_x\times\delta_\zeta)(\d y\d\mu).
\end{align}
\begin{prp}\label{P4.8}
Suppose \eqref{EQ432} holds. Then the family of probability measures $\mathbf{P}_{s,t}(x,\zeta;\cdot)$, $0\leq s\leq t$, $(x,\zeta)\in\R^d\times\scr P_0$, is a Markov transition kernel on $\R^d\times\scr P_0$, i.e., it satisfies the following properties:
\begin{enumerate}
\item[$(C_1)$] $\PP_{s,t}(\cdot;A) $ is Borel-measurable in $(x,\zeta)$ for any $0\le s\le t$, $A\in\scr P(\R^d\times\scr P_0)$ and $\PP_{s,s}(x,\zeta;\cdot)= \dd_{(x,\zeta)}$, the Dirac measure at $(x,\zeta)$, for  any $s\ge 0$ and $(x,\zeta)\in \R^d\times\scr P_0$.
 \item[$(C_2)$] The Chapman-Kolmogorov equations hold, i.e. for all $0\le s<r<t$ and $(x,\zeta)\in \R^d\times\scr P_0$
\begin{align}\label{EQ435}
 \PP_{s,t}(x,\zeta;\cdot)= \int_{\R^d\times\scr P_0} \PP_{r,t}(y,\mu;\cdot) \PP_{s,r}(x,\zeta; \d y, \d \mu).
\end{align}
\end{enumerate}
\end{prp}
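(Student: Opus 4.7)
The plan is to verify $(C_1)$ directly from the definitions and to reduce the Chapman-Kolmogorov equation in $(C_2)$ to two flow-type uniqueness arguments for solutions of the coupled system \eqref{eq:2.2'}. For $(C_1)$, Borel measurability of $\mathbf{P}_{s,t}(\cdot;A)$ in $(x,\zeta)$ is immediate from the measurability of $\zeta\mapsto\mu_{s,t}^\zeta$ and $(\zeta,x)\mapsto\nu_{s,t}^{\zeta,\delta_x}$ guaranteed by Definition \ref{44DF1*}(2)(a),(b), together with the product structure \eqref{EQ433}; a monotone class argument reduces general $A\in\scr B(\R^d\times\scr P_0)$ to measurable rectangles. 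The identity $\mathbf{P}_{s,s}(x,\zeta;\cdot)=\delta_{(x,\zeta)}$ is recorded in \eqref{EQ434}, using $\nu_{s,s}^{\zeta,\delta_x}=\delta_x$ and $\mu_{s,s}^\zeta=\zeta$.

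For $(C_2)$, fix $0\leq s<r<t$ and $(x,\zeta)\in\R^d\times\scr P_0$. Inserting \eqref{EQ433} on both sides of \eqref{EQ435} and carrying out the trivial integration against the Dirac factor $\delta_{\mu_{s,r}^\zeta}$, one checks the equation reduces to the two identities
\begin{equation*}
\mu_{r,t}^{\mu_{s,r}^\zeta}=\mu_{s,t}^\zeta,\qquad
\int_{\R^d}\nu_{r,t}^{\mu_{s,r}^\zeta,\delta_y}\,\nu_{s,r}^{\zeta,\delta_x}(\d y)=\nu_{s,t}^{\zeta,\delta_x}.
\end{equation*}
The first is the flow property for the nonlinear Fokker-Planck equation: both $(\mu_{s,r+u}^\zeta)_{u\geq 0}$ and $(\mu_{r,r+u}^{\mu_{s,r}^\zeta})_{u\geq 0}$ solve \eqref{eq:1.3} starting from $\mu_{s,r}^\zeta\in\scr P_0$ and lie in $\scr A$ thanks to the shift-closedness built into property $(\mathbf{P})$, hence coincide by the uniqueness clause of \eqref{EQ432}. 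For the second identity, set $\tilde\nu_u:=\int_{\R^d} \nu_{r,u}^{\mu_{s,r}^\zeta,\delta_y}\,\nu_{s,r}^{\zeta,\delta_x}(\d y)$ for $u\geq r$; by linearity of $\bar L^*_{u,\mu_{r,u}^{\mu_{s,r}^\zeta}}$ in its argument and Fubini's theorem, $(\tilde\nu_u)_{u\geq r}$ is a weakly continuous solution of \eqref{44A2} with $\mu_r$ replaced by $\mu_{r,\cdot}^{\mu_{s,r}^\zeta}$, starting at $\tilde\nu_r=\nu_{s,r}^{\zeta,\delta_x}$. Using the first flow identity to rewrite the frozen coefficients as $\bar L_{u,\mu_{s,u}^\zeta}$, the path $(\nu_{s,u}^{\zeta,\delta_x})_{u\geq r}$ solves the same linear equation with the same initial datum; the uniqueness clause of \eqref{EQ432} applied to the $\nu$-equation (which applies because $\tilde{\scr P}_0=\scr P$ contains both $\nu_{s,r}^{\zeta,\delta_x}$ and $\tilde\nu_r$, and $\tilde{\scr A}=C([0,\infty)\to\scr P)$ contains any weakly continuous path after time-shift) forces $\tilde\nu_t=\nu_{s,t}^{\zeta,\delta_x}$.

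The main technical obstacle is showing that the superposition $\tilde\nu_\cdot$ qualifies as an admissible solution of the frozen linear Fokker-Planck equation. Weak continuity in $u$ reduces via dominated convergence to the uniform bound $|\int h\,\d\nu_{r,u}^{\mu_{s,r}^\zeta,\delta_y}|\leq \|h\|_\infty$ for $h\in C_b(\R^d)$, combined with pointwise (in $y$) weak continuity of $u\mapsto\nu_{r,u}^{\mu_{s,r}^\zeta,\delta_y}$. The integrability condition \eqref{44A1} for $\tilde\nu_\cdot$ transfers from that of each individual $\nu_{r,\cdot}^{\mu_{s,r}^\zeta,\delta_y}$ by Fubini, once joint Borel measurability of $y\mapsto\nu_{r,u}^{\mu_{s,r}^\zeta,\delta_y}$ is invoked from Definition \ref{44DF1*}(2)(b). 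Once these regularity verifications are collected, the two uniqueness invocations above close the proof of $(C_2)$.
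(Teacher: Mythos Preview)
Your proof is correct and follows essentially the same route as the paper: both reduce the Chapman--Kolmogorov equation to the flow identity $\mu_{r,t}^{\mu_{s,r}^\zeta}=\mu_{s,t}^\zeta$ and the superposition identity $\int_{\R^d}\nu_{r,t}^{\mu_{s,r}^\zeta,\delta_y}\,\nu_{s,r}^{\zeta,\delta_x}(\d y)=\nu_{s,t}^{\zeta,\delta_x}$, each established via the uniqueness parts of Definition~\ref{44DF1*}(2)(a),(b). The paper simply asserts these two identities hold ``by well-posedness'', whereas you spell out why the mixture $\tilde\nu_\cdot$ is an admissible solution (weak continuity, integrability, and that $\tilde{\scr P}_0=\scr P$, $\tilde{\scr A}=C([0,\infty)\to\scr P)$ so uniqueness applies); this extra detail is welcome but does not change the argument.
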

\begin{proof}
$(C_1)$ is obvious, so it only remains to prove the Chapman-Kolmogorov equations.\\
For $0\leq s\leq r \leq t$ and $(x,\zeta)\in\R^d\times\scr P_0$ the right hand side of \eqref{EQ435} is equal to
\begin{align*}
&\int_{\R^d}\mathbf{P}_{r,t}(y,\mu_{s,r}^\zeta;\cdot)\; \nu_{s,r}^{\zeta,\delta_x}(\ \d y)\\
&=\int_{\R^d}\left(\nu_{r,t}^{\mu_{s,r}^\zeta,\delta_y}\times\delta_{\mu_{r,t}^{\mu_{s,r}^\zeta}}\right)\ \nu_{s,r}^{\zeta,\delta_x}(\ \d y)\\
&=\left(\int_{\R^d}\nu_{r,t}^{\mu_{s,r}^\zeta,\delta_y}\; \nu_{s,r}^{\zeta,\delta_x}(\ \d y)\right)\times\delta_{\mu_{s,t}^\zeta}\\
&=\nu_{s,t}^{\zeta,\delta_x}\times\delta_{\mu_{s,t}^\zeta}=\mathbf{P}_{s,t}(x,\zeta;\cdot)
\end{align*}
where we used the well-posedness of \eqref{eq:2.2'}, more precisely Definition \ref{44DF1*}(2)(a), (b) in the second and third equality respectively.
\end{proof}
A Markov transition kernel on $\R^d\times\scr P_0$ determines a family of Markov operators $\{\PP_{s,t}: 0\le s\le t\}$ on      $\B_b(\R^d\times\scr P_0)$, the Banach space of bounded Borel-measurable functions on $\R^d\times \scr P_0$:
\begin{align}\label{EQ46}
\PP_{s,t} f(x,\zeta):= \int_{\R^d\times\scr P_0} f(\xi)\, \PP_{s,t}(x,\zeta;\d\xi),\ \  (x,\zeta)\in \R^d\times \scr P_0, f\in  \B_b(\R^d\times\scr P_0).
\end{align}
Conditions $(C_1)$ and $(C_2)$ are equivalent to $\PP_{s,s}f=f$ and the semigroup property
$$\PP_{s,t}= \PP_{s,r}\PP_{r,t}, \ \ 0\le s\le r\le t.$$
By \eqref{EQ432} and Theorem \ref{prp41} we have that \eqref{44EE} is well-posed in $((\scr P_0,\scr A), (\tilde{\scr P_0},\tilde{\scr A}))$, with $(\tilde{\scr P_0},\tilde{\scr A})$ as in \eqref{EQ431}. For $(s,x,\zeta)\in[0,\infty)\times\R^d\times\scr P_0$, as in Definition \ref{44DF1}(ii) we denote the corresponding solutions by $(X_{s,t}^{\zeta,\delta_x},\mu_{s,t}^\zeta)_{t\geq s}$ on $(\Omega,\scr F,(\scr F_t)_{t\geq0},\mathbb{P})$ with $(\scr F_t)$-Brownian motion $(W_t)_{t\geq0}$ (see Section 2). We note that the stochastic basis and the Brownian motion depend on $(s,x,\zeta)$, but for simplicity we do not express this in the notation.

Our next aim is to prove that the laws of $(X_{s,t}^{\zeta,\delta_x},\mu_{s,t}^{\zeta})_{t\geq s}$, $(s,x,\zeta)\in[0,\infty)\times\R^d\times\scr P_0$, form a Markov process with Markov transition kernel, $\mathbf{P}_{s,t}$, $0\leq s\leq t$, defined in \eqref{EQ433}.
For $(s,\theta,\zeta)\in[0,\infty)\times\scr P\times\scr P_0$ we define the laws
\begin{equation}
\begin{aligned}\label{EQ436}
\mathbb{P}_{(s,\theta,\zeta)}&:=\mathbb{P}\circ(X_{s,\cdot}^{\zeta,\theta},\mu_{s,\cdot}^{\zeta})^{-1}=\left(\P\circ(X_{s,\cdot}^{\zeta,\theta})^{-1}\right)\times\delta_{\mu_{s,\cdot}^{\zeta}}=\P_{\zeta,(s,\theta)}\times\delta_{\mu_{s,\cdot}^\zeta}\ \text{ and}\\
\mathbb{P}_{(s,x,\zeta)}&:=\mathbb{P}_{(s,\delta_x,\zeta)},\ x\in \R^d,
\end{aligned}
\end{equation}
on $C([s,\infty)\rightarrow\R^d\times\scr P_0)=C([s,\infty)\rightarrow\R^d)\times C([s,\infty)\rightarrow \scr P_0).$ We denote the canonical projection on this path space by $\pi_t^0$, $t\in[s,\infty)$, and equip it with the $\sigma$-algebra $\scr G^0$ generated by these projections. In addition, we define for $t\in[s,\infty)$
\begin{align*}
\scr G_{s,t}^0:=\sigma(\pi_u^0:\ u\in[s,t]).
\end{align*}
Furthermore, we denote the corresponding expectations by $\mathbb{E}_{(s,\theta,\zeta)}$.

\begin{thm}\label{thm46}
Suppose that \eqref{EQ432} holds. Let $({\bf P}_{s,t})_{t\ge s\ge 0}$ be in $\eqref{EQ46}.$ Then for any $(s,x,\zeta)\in[0,\infty)\times\R^d\times\scr P_0$, $t\in[s,\infty)$, and $r\in[s,t]$:
\begin{enumerate}
\item[$($i$)$] $\mathbb{P}_{(s,x,\zeta)}\circ\pi_t^{-1}=\mathbf{P}_{s,t}(x,\zeta;\cdot).$
\item[$($ii$)$] $\mathbb{P}_{(s,x,\zeta)}$-a.s. we have for every $A\in\scr B(\R^d\times\scr P_0)$
\begin{align}\label{EQ438}
\mathbb{P}_{(s,x,\zeta)}[\pi_t^0\in A| \scr G_r^0]=\mathbb{P}_{(r,\pi_r^0)}[\pi_t^0\in A]=\mathbf{P}_{r,t}(\pi_r^0;A),
\end{align}
i.e., $\mathbb{P}_{(s,x,\zeta)}$, $(s,x,\zeta)\in[0,\infty)\times\R^d\times\scr P_0$, form a Markov process with Markov transition kernel $\mathbf{P}_{s,t}$, $0\leq s\leq t$.
\end{enumerate}
\end{thm}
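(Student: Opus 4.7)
The plan is to derive (i) directly from the definition of $\P_{(s,x,\zeta)}$ together with Theorem \ref{prp41}, and to obtain (ii) by combining the Markov property for the first coordinate (already established in Theorem \ref{thm45}) with the fact that the second coordinate is deterministic under $\P_{(s,x,\zeta)}$.

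For (i) I will use the factorization $\P_{(s,x,\zeta)} = \P_{\zeta,(s,\delta_x)} \times \delta_{\mu_{s,\cdot}^\zeta}$ from \eqref{EQ436}, which gives $\P_{(s,x,\zeta)} \circ \pi_t^{-1} = \scr L_{X_{s,t}^{\zeta,\delta_x}} \times \delta_{\mu_{s,t}^\zeta}$. By Theorem \ref{prp41}, $\scr L_{X_{s,t}^{\zeta,\delta_x}} = \nu_{s,t}^{\zeta,\delta_x}$, so the right-hand side is exactly $\mathbf{P}_{s,t}(x,\zeta;\cdot)$ by \eqref{EQ433}. The second equality in \eqref{EQ438} will then follow simply by applying this identity with $(r,\pi_r^0)$ in place of $(s,x,\zeta)$.

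For (ii) a monotone-class argument reduces matters to rectangles $A = A_1 \times A_2$ with $A_1 \in \scr B(\R^d)$ and $A_2 \in \scr B(\scr P_0)$. Because the second coordinate of $\pi_\cdot^0$ is $\P_{(s,x,\zeta)}$-a.s. equal to the deterministic path $\mu_{s,\cdot}^\zeta$, the $\sigma$-algebra $\scr G_r^0$ coincides, modulo $\P_{(s,x,\zeta)}$-null sets, with the first-coordinate filtration $\scr G_{s,r}$, and I will obtain
\[
\P_{(s,x,\zeta)}[\pi_t^0 \in A_1 \times A_2 \mid \scr G_r^0] = 1_{A_2}(\mu_{s,t}^\zeta)\, \P_{\zeta,(s,\delta_x)}[\pi_t \in A_1 \mid \scr G_{s,r}].
\]
Applying Theorem \ref{thm45} with $\theta = \delta_x$ to the second factor rewrites it as $\int 1_{A_1}(\pi_t(w))\, p(\pi_r,\d w)$, where $p$ is the disintegration kernel from \eqref{EQ434*}. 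The crucial step is to identify $p(y,\cdot) = \P_{\zeta,(r,\delta_y)}$ for $\nu_{s,r}^{\zeta,\delta_x}$-a.e.~$y$: both measures are concentrated on paths with $\pi_r = y$; both solve the martingale problem for $L_{u,\mu_{s,u}^\zeta}$ (the kernel $p$ as in the proof of Theorem \ref{thm45}, and $\P_{\zeta,(r,\delta_y)}$ by Remark \ref{rem24}); and both push forward by $\pi_\cdot$ to solutions of \eqref{44A2} with initial datum $\delta_y$. Since $\tilde{\scr P}_0 = \scr P$, $\delta_y$ is an admissible initial condition in Definition \ref{44DF1*}(2)(b), so the restricted well-posedness \eqref{EQ432} combined with \cite[Lemma 2.12]{RWTR} will force the two path-space measures to coincide. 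Together with the flow identity $\mu_{r,t}^{\mu_{s,r}^\zeta} = \mu_{s,t}^\zeta$ (already built into Proposition \ref{P4.8}), this yields
\[
\P_{(s,x,\zeta)}[\pi_t^0 \in A_1 \times A_2 \mid \scr G_r^0] = 1_{A_2}(\mu_{r,t}^{\mu_{s,r}^\zeta})\, \nu_{r,t}^{\mu_{s,r}^\zeta,\delta_{\pi_r}}(A_1) = \mathbf{P}_{r,t}(\pi_r^0; A_1 \times A_2),
\]
which is what has to be proved.

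The hard part will be the identification $p(y,\cdot) = \P_{\zeta,(r,\delta_y)}$. Because only restricted well-posedness is available, I cannot invoke a standard Markov-process uniqueness theorem, and I must instead re-run the martingale-problem/Fokker-Planck equivalence argument in the same style as the Claim inside the proof of Theorem \ref{thm45}. This is precisely the place where the assumption $\tilde{\scr P}_0 = \scr P$ imposed throughout this subsection is essential: without it, the Dirac initial condition $\delta_y$ might fall outside the class of admissible initial data for the frozen linear equation \eqref{44A2}, and the uniqueness step that pins down $p(y,\cdot)$ would collapse.
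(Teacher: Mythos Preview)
Your proposal is correct, but it takes a longer route than the paper for part (ii). The paper observes that because in this subsection $\tilde{\scr P}_0=\scr P$ (see \eqref{EQ431}), the martingale problem for the frozen linear operator $\bar L_{u,\mu_{s,u}^\zeta}$ is well-posed for \emph{all} initial conditions, in particular for every Dirac measure $\delta_y$. This puts us squarely in the classical Stroock--Varadhan framework, so the paper simply invokes \cite[Theorem 6.2.2]{RWSV} to obtain directly
\[
\E_{\zeta,(s,x)}[g(\pi_t)\mid\scr G_{s,r}]=\E_{\zeta,(r,\pi_r)}[g(\pi_t)]=\int_{\R^d}g\,\d\nu_{r,t}^{\mu_{s,r}^\zeta,\delta_{\pi_r}}\quad\P_{\zeta,(s,x)}\text{-a.s.},
\]
and then lifts this to the product space via the deterministic second coordinate and the flow identity $\mu_{s,t}^\zeta=\mu_{r,t}^{\mu_{s,r}^\zeta}$, checked against cylinder functions $H(\pi_{u_1}^0,\dots,\pi_{u_n}^0)G(\pi_t^0)$ and concluded by a monotone class argument.

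You instead pass through Theorem \ref{thm45}, which was tailored to the \emph{restricted} well-posedness situation of Section 4.3 and yields only the abstract disintegration kernel $p(\cdot,\d w)$; you then have to identify $p(y,\cdot)=\P_{\zeta,(r,\delta_y)}$ by a separate uniqueness argument via \cite[Lemma 2.12]{RWTR}. That identification is precisely what \cite[Theorem 6.2.2]{RWSV} packages as a black box, so your ``crucial step'' is in effect a re-derivation of a special case of the classical result. Both approaches rest on the same uniqueness mechanism and both correctly single out $\tilde{\scr P}_0=\scr P$ as the reason Dirac initial data are admissible; the paper's route is just shorter because it recognises that the stronger hypothesis \eqref{EQ432} makes Theorem \ref{thm45} unnecessary here.
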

\begin{proof}
(i) is obvious from the definitions.\\
(ii) Since by our assumptions we have weak uniqueness for the second equation in \eqref{44EE}, by \cite[Theorem 6.2.2]{RWSV} and Theorem \ref{prp41} above we know that for every $s\leq r\leq t$, $g\in \scr B_b(\R^d)$
\begin{align}\label{EQ439}
\mathbb{E}_{\zeta,(s,x)}[g(\pi_t)|\scr G_{s,r}]=\mathbb{E}_{\zeta,(r,\pi_r)}[g(\pi_t)]=\int_{\R^d}g\ \d\nu_{r,t}^{\mu_{s,r}^{\zeta},\delta_{\pi_r}}\ \  \mathbb{P}_{\zeta,(s,x)}\text{-a.s.}
\end{align}
For $n\in\N$; $s\leq u_1\leq\cdots\leq u_n\leq r$, and $H\in\scr B((\R^d\times\scr P_0)^n)$, we have that $\mathbb{P}_{(s,x,\zeta)}$-a.s. with $G:=1_A$
\begin{align*}
&\mathbb{E}_{(s,x,\zeta)}[H(\pi_{u_1}^0,\dots,\pi_{u_n}^0)G(\pi_t^0)]\\
=&\mathbb{E}[H((X_{s,u_1}^{\zeta,\delta_x},\mu_{s,u_1}^\zeta),\dots,(X_{s,u_n}^{\zeta,\delta_x},\mu_{s,u_n}^\zeta))G(X_{s,t}^{\zeta,\delta_x},\mu_{s,t}^{\zeta})]\\
=&\mathbb{E}_{\zeta,(s,x)}[H((\pi_{u_1},\mu_{s,u_1}^\zeta),\dots,(\pi_{u_n},\mu_{s,u_n}^\zeta))G(\pi_t,\mu_{s,t}^\zeta)]\\
\underset{\eqref{EQ439}}{=}&\mathbb{E}[H((X_{s,u_1}^{\zeta,\delta_x},\mu_{s,u_1}^{\zeta}),\dots,(X_{s,u_n}^{\zeta,\delta_x},\mu_{s,u_n}))\mathbb{E}_{\zeta,(r,X_{s,r}^{\zeta,\delta_x})}[G(\pi_t,\mu_{s,t}^\zeta)]].
\end{align*}
But since by the well-posedness of the first equation in \eqref{44EE} in $(\scr P_0,\scr A)$ we have the flow property
\begin{align*}
\mu_{s,t}^\zeta=\mu_{r,t}^{\mu_{s,r}^\zeta},
\end{align*}
\eqref{EQ432*} and \eqref{EQ436} imply that for $\mathbb{P}$-a.e. $\omega\in\Omega$
\begin{align*}
&\mathbb{E}_{\zeta,(r,X_{s,r}^{\zeta,\delta_x}(\omega))}[G(\pi_t,\mu_{s,t}^\zeta)]\\
=&\mathbb{E}[G(X_{r,t}^{\mu_{s,r}^{\zeta},\delta_{X_{s,r}^{\zeta,\delta_x}(\omega)}},\mu_{r,t}^{\mu_{s,r}^\zeta})]\\
=&\mathbb{E}_{(r,X_{s,r}^{\zeta,\delta_x}(\omega),\mu_{s,r}^\zeta)}[G(\pi_t^0)].
\end{align*}
Hence altogether
\begin{align*}
&\mathbb{E}_{(s,x,\zeta)}[H(\pi_{u_1}^0,\dots,\pi_{u_n}^0)G(\pi_t^0)]\\
=&\mathbb{E}_{(s,x,\zeta)}[G(\pi_{u_1}^0,\dots,\pi_{u_n}^0)\mathbb{E}_{(r,\pi_r^0)}[G(\pi_t^0)]].
\end{align*}
Hence the first inequality in \eqref{EQ438} follows by a monotone class argument and the second is obvious by (i).
\end{proof}
We note that since \eqref{EQ432} is a stronger assumption than \eqref{EQ431*} we get a more explicit way to formulate the Markov property for the laws of the (unique) weak solution to the McKean-Vlasov equation \eqref{eq:1.8} (see \eqref{EQ4318}, \eqref{EQ4319} below).
\begin{align}\label{EQ4310}
b=\bar{b},\ \sigma=\bar{\sigma}.
\end{align}
Then for $(s,\zeta)\in[0,\infty)\times\scr P_0$ by \eqref{EQ432} we have that (using our notations above) for every $\theta\in\scr P$ there exists a unique in law weak solution $(X_{s,t}^{\zeta,\theta})_{t\geq s}$ of the second equation in \eqref{44EE} with $b,\sigma$ replacing $\bar{b}$ and $\bar{\sigma}$ respectively on some probability  space $(\Omega,\scr F,\mathbb{P})$ with normal filtration $(\scr F_t)_{t\geq 0}$ and $(\scr F_t)$-Brownian motion $(W_t)_{t\geq 0}$ (see Section 2). Again all these three quantities depend on $(s,\theta,\zeta)$, but for simplicity we do not express this in the notation. In particular, for $\theta:=\zeta$ we obtain a weak solution to \eqref{eq:1.8} with marginal law $\zeta$ at time $s$.
\begin{lem}\label{lem47}
For all $(s,\theta,\zeta)\in[0,\infty)\times\scr P\times \scr P_0$ we have
\begin{align*}
\mathbb{P}_{\zeta,(s,\theta)}=\int_{\R^d}\mathbb{P}_{\zeta,(s,x)}\theta(\d x).
\end{align*}
\end{lem}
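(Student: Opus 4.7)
The plan is to verify the identity by showing that both sides of the claimed equation are laws of weak solutions to the same frozen SDE with initial distribution $\theta$, and then appealing to weak uniqueness guaranteed by \eqref{EQ432} and Theorem \ref{prp41}. More concretely, I would define
$$\tilde{\mathbb{P}}(A) := \int_{\R^d} \mathbb{P}_{\zeta,(s,x)}(A)\,\theta(\d x),\quad A\in\scr G,$$
as a probability measure on $C([s,\infty)\rightarrow\R^d)$, show that $\tilde{\mathbb{P}}$ solves the martingale problem for $L_{u,\mu_{s,u}^\zeta}$, $u\geq s$, with initial marginal $\theta$, and then identify $\tilde{\mathbb{P}} = \mathbb{P}_{\zeta,(s,\theta)}$ by uniqueness.

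First I would check that $\tilde{\mathbb{P}}$ is well-defined. The Borel measurability of $x\mapsto \nu_{s,t}^{\zeta,\delta_x}= \mathbb{P}_{\zeta,(s,x)}\circ \pi_t^{-1}$ is part of Definition \ref{44DF1*}(2)(b). Combining this with the Markov property established in Theorem \ref{thm45} and iterating, one obtains that $x\mapsto \mathbb{P}_{\zeta,(s,x)}(A)$ is Borel measurable on every finite-dimensional cylinder; a monotone class argument then extends this to all $A\in\scr G$. Consequently $\tilde{\mathbb{P}}$ is a probability measure on $(C([s,\infty)\rightarrow\R^d),\scr G)$ with $\tilde{\mathbb{P}}\circ\pi_s^{-1}= \int \delta_x\,\theta(\d x)=\theta$.

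Next, since in the present setting $b=\bar b$ and $\sigma=\bar\sigma$, Itô's formula applied to the weak solution $X_{s,\cdot}^{\zeta,\delta_x}$ of \eqref{44E2} with the frozen flow $(\mu_{s,r}^\zeta)_{r\geq s}$ shows that for every $\varphi\in C_0^\infty(\R^d)$ the process
$$M_t^\varphi:=\varphi(\pi_t)-\varphi(\pi_s)-\int_s^t L_{u,\mu_{s,u}^\zeta}\varphi(\pi_u)\,\d u,\quad t\geq s,$$
is a continuous local $(\scr G_{s,t})$-martingale under each $\mathbb{P}_{\zeta,(s,x)}$. Localizing by $\tau_n:=\inf\{t\geq s:|\pi_t|\geq n\}$, the stopped process $M_{\cdot\wedge\tau_n}^\varphi$ is a bounded martingale under each $\mathbb{P}_{\zeta,(s,x)}$, and Fubini (integrating against $\theta$) gives that it is also a $(\scr G_{s,t})$-martingale under $\tilde{\mathbb{P}}$. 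Letting $n\to\infty$ shows that $M^\varphi$ is a local martingale under $\tilde{\mathbb{P}}$, so $\tilde{\mathbb{P}}$ solves the martingale problem on $[s,\infty)$ for $L_{u,\mu_{s,u}^\zeta}$ with initial distribution $\theta$. By the correspondence between martingale problems and pathwise-continuous weak solutions of SDEs, as used in the proof of Theorem \ref{prp41} (cf.\ \cite[Lemma 2.12]{RWTR}), $\tilde{\mathbb{P}}$ is the law of a weak solution to the second equation in \eqref{44EE} with $\mu_r=\mu_{s,r}^\zeta$ and $\scr L_{\pi_s}=\theta$.

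Finally, I would invoke weak uniqueness: hypothesis \eqref{EQ432} together with Theorem \ref{prp41} ensures that \eqref{44EE} is well-posed in $((\scr P_0,\scr A),(\scr P, C([0,\infty)\rightarrow\scr P)))$, so the law of any such weak solution is uniquely determined by $(\zeta,\theta)$. Since by its very definition $\mathbb{P}_{\zeta,(s,\theta)}$ is also the law of such a solution, we conclude $\tilde{\mathbb{P}}=\mathbb{P}_{\zeta,(s,\theta)}$, which is the claim. The substantive step is the Fubini/localization argument in Step 2; the main technical nuisance is the measurability needed to define $\tilde{\mathbb{P}}$ on all of $\scr G$, which is however automatic once Theorem \ref{thm45} is available.
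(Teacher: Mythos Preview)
Your proposal is correct and follows essentially the same approach as the paper: both show that the two measures solve the same martingale problem for the frozen Kolmogorov operator $L_{u,\mu_{s,u}^\zeta}$ with initial marginal $\theta$, and then appeal to weak uniqueness via \eqref{EQ432} and Theorem~\ref{prp41}. The paper compresses this into three sentences and calls it ``standard,'' whereas you spell out the measurability of $x\mapsto \mathbb{P}_{\zeta,(s,x)}$ and the Fubini/localization step. One minor remark: your detour through Theorem~\ref{thm45} for measurability is slightly indirect; since under \eqref{EQ432} the frozen martingale problem is well-posed for \emph{all} Dirac initial data (because $\tilde{\scr P}_0=\scr P$), the Borel measurability of $x\mapsto \mathbb{P}_{\zeta,(s,x)}$ on all of $\scr G$ follows directly from the Stroock--Varadhan theory invoked in \eqref{EQ439} (cf.\ \cite[Theorem~6.2.2]{RWSV}), without needing to iterate via the disintegration kernel $p$.
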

\begin{proof}
The proof is standard, but we repeat the argument here: The two probability measures in the assertion solve the martingale problem with initial condition $\zeta$ for the Kolmogorov operator $L_{t,\zeta}$ defined in \eqref{eq:1.2} in the sense of \cite{RWSV}. But by \eqref{EQ432} and Theorem \ref{prp41} this martingale problem has a unique solution. So, both measures coincide.
\end{proof}
\begin{thm}\label{thm48}
Let \eqref{EQ432} hold and let $(s,\zeta)\in[0,\infty)\times\scr P_0$. Then:
\begin{enumerate}
	\item[$($i$)$] The unique weak solution to \eqref{eq:1.8} with marginal law $\zeta$ at $s$ has the Markov property, i.e. for every $s\leq r\leq t$, $g\in\scr B_b(\R^d)$,
	\begin{align}\label{EQ4318}
	\mathbb{E}_{\zeta,(s,\zeta)}[g(\pi_t)|\scr G_{s,r}]=\mathbb{E}_{\zeta,(r,\pi_r)}[g(\pi_t)]=\int_{\R^d}g\ \d\nu_{r,t}^{\mu_{s,r}^{\zeta},\delta_{\pi_r}}\ \text{ $\mathbb{P}_{\zeta,(s,\zeta)}$-a.s.}
	\end{align}
	\item[$($ii$)$] $\mathbb{P}_{(s,\zeta,\zeta)}$ is Markov. More precisely, for $s\leq r\leq t$ and $A\in\scr B(\R^d\times\scr P_0)$
	\begin{align}\label{EQ4319}
	\mathbb{P}_{(s,\zeta,\zeta)}[\pi_t^0\in A|\scr G_r^0]=\mathbf{P}_{r,t}(\pi_r^0;A)\ \text{ $\mathbb{P}_{(s,\zeta,\zeta)}$-a.s.}
	\end{align}
\end{enumerate}
\end{thm}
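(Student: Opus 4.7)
My strategy is to reduce part (i) to the Markov identity \eqref{EQ439}, already proved inside Theorem \ref{thm46} for $\mathbb{P}_{\zeta,(s,x)}$ with Dirac initial marginal, by invoking the disintegration in Lemma \ref{lem47}. Part (ii) will then follow essentially by bookkeeping: by \eqref{EQ436} and assumption \eqref{EQ4310} one has $\mathbb{P}_{(s,\zeta,\zeta)}=\mathbb{P}_{\zeta,(s,\zeta)}\times\delta_{\mu_{s,\cdot}^\zeta}$, so the second coordinate of $\pi_t^0$ is deterministic under $\mathbb{P}_{(s,\zeta,\zeta)}$; the only extra ingredient needed is the flow identity $\mu_{r,t}^{\mu_{s,r}^\zeta}=\mu_{s,t}^\zeta$, which is immediate from the uniqueness granted by \eqref{EQ432}.

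For (i), fix $n\in\N$, $s\le u_1\le\cdots\le u_n\le r\le t$, $h\in\scr B_b((\R^d)^n)$ and $g\in\scr B_b(\R^d)$. Applying Lemma \ref{lem47} with $\theta=\zeta$, then the tower property together with \eqref{EQ439} under each $\mathbb{P}_{\zeta,(s,x)}$, and finally a second application of Lemma \ref{lem47} to reassemble, gives
\[
\mathbb{E}_{\zeta,(s,\zeta)}\big[h(\pi_{u_1},\dots,\pi_{u_n})\,g(\pi_t)\big]
=\mathbb{E}_{\zeta,(s,\zeta)}\Big[h(\pi_{u_1},\dots,\pi_{u_n})\int_{\R^d}g\,\d\nu_{r,t}^{\mu_{s,r}^\zeta,\delta_{\pi_r}}\Big].
\]
A standard monotone class argument then upgrades this to \eqref{EQ4318}. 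The $\scr G_{s,r}$-measurability of $\int g\,\d\nu_{r,t}^{\mu_{s,r}^\zeta,\delta_{\pi_r}}$ used here is supplied by Definition \ref{44DF1*}(2)(b) together with the weak continuity of $y\mapsto\delta_y$.

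For (ii), under $\mathbb{P}_{(s,\zeta,\zeta)}$ the path $\pi_\cdot^0=(\pi_\cdot,\mu_{s,\cdot}^\zeta)$ has its second coordinate deterministic, so $\scr G_r^0$ coincides, modulo $\mathbb{P}_{(s,\zeta,\zeta)}$-null sets, with $\scr G_{s,r}$ pulled back under the first projection. By \eqref{EQ433} and the flow identity above,
\[
\mathbf{P}_{r,t}(\pi_r^0;\cdot)=\nu_{r,t}^{\mu_{s,r}^\zeta,\delta_{\pi_r}}\times\delta_{\mu_{s,t}^\zeta},
\]
and testing against product sets $B\times C\in\scr B(\R^d)\otimes\scr B(\scr P_0)$, part (i) immediately yields $\mathbb{P}_{(s,\zeta,\zeta)}[\pi_t^0\in B\times C\mid\scr G_r^0]=\nu_{r,t}^{\mu_{s,r}^\zeta,\delta_{\pi_r}}(B)\,\mathbf{1}_C(\mu_{s,t}^\zeta)$, whence \eqref{EQ4319} follows by another monotone class argument. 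The main obstacle, insofar as there is one, is purely technical: one must carefully separate throughout the two $\sigma$-algebras $\scr G_{s,r}$ and $\scr G_r^0$ on the different path spaces $C([s,\infty)\to\R^d)$ and $C([s,\infty)\to\R^d\times\scr P_0)$, and verify that the Borel measurability of $(\theta,\zeta)\mapsto\nu_{s,t}^{\zeta,\theta}$ from Definition \ref{44DF1*}(2)(b) transfers across the continuous embedding $y\mapsto\delta_y$. No new analytic input beyond \eqref{EQ432} and Theorem \ref{thm46} should be required.
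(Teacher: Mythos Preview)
Your proposal is correct and, for part (i), is exactly the paper's argument: disintegrate $\mathbb{P}_{\zeta,(s,\zeta)}$ via Lemma \ref{lem47}, apply \eqref{EQ439} fibrewise, reassemble, and finish with a monotone class argument. For part (ii) the paper takes a marginally shorter path: it observes from Lemma \ref{lem47} and \eqref{EQ436} that $\mathbb{P}_{(s,\zeta,\zeta)}=\int\mathbb{P}_{(s,x,\zeta)}\,\zeta(\d x)$ and then simply integrates the already-proved Markov identity \eqref{EQ438} of Theorem \ref{thm46}(ii) against $\zeta(\d x)$, rather than reducing to part (i) via the product structure and the flow identity as you do. The two routes are equivalent---both ultimately rest on \eqref{EQ439}, Lemma \ref{lem47}, and the flow property---but the paper's version saves you the explicit unpacking of $\mathbf{P}_{r,t}$ and the $\scr G_r^0$-versus-$\scr G_{s,r}$ bookkeeping you flag as the technical obstacle.
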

\begin{proof}
(i): The assertion immediately follows by \eqref{EQ439} and Lemma \ref{lem47}.\\
(ii): As an easy consequence of Lemma \ref{lem47} we get that
\begin{align*}
\mathbb{P}_{(s,\zeta,\zeta)}=\int \mathbb{P}_{(s,x,\zeta)}\zeta(\d x).
\end{align*}
Then the assertion immediately follows from Theorem \ref{thm46}(ii).
\end{proof}
\subsection{Ergodicity}
We recall that we still assume \eqref{EQ432} with $(\tilde{\scr P}_0,\tilde{\scr A})$ as in \eqref{EQ431}. In this subsection we assume additionally that for our coefficients from \eqref{eq:2.0} we have
\begin{align}\label{eq:4.15}
	b, \bar{b}, \sigma \text{ and } \bar{\sigma} \text{ do not depend on } t\in[0,\infty).
\end{align}
Then due to \eqref{EQ432} for our Markov transition kernel $\mathbf{P}_{s,t}$, $0 \leq s \leq t$, defined in \eqref{EQ433} we have that $ \mathbf{P}_{s,t} = \mathbf{P}_{0,t-s}$, i.e.~it is time-homogeneous.
Set
\begin{align}\label{eq:4.16}
	\mathbf{P}_{t}:= \mathbf{P}_{0,t}, \quad t\geq 0.
\end{align}
Then by $(C_2)$ this is a semigroup of probability kernels on $\mathbb R^d \times \scr P_0$ or equivalently (see \eqref{EQ46}) of operators on $\scr B_b (\mathbb R^d \times \scr P_0)$.
We recall that $\Lambda \in \scr P(\mathbb R^d \times \scr P_0)$ is called $(\mathbf{P}_{t})$-invariant if
\begin{align}\label{eq:4.17}
	\int_{\mathbb R^d \times \scr P_0} \mathbf{P}_{t} G \, \d \Lambda = \int_{\mathbb R^d \times \scr P_0} G \, \d \Lambda,
\end{align}
for all $G \in \scr B_b (\mathbb R^d \times \scr P_0)$, $t>0$, and that $(\mathbf{P}_{t})_{t>0}$ is called ergodic if there exists $\Lambda \in \scr P(\mathbb R^d \times \scr P_0)$ such that for all $\tilde \Lambda \in \scr P(\mathbb R^d \times \scr P_0)$ (or equivalently for all $\tilde{\Lambda}=\delta_x\times\delta_\zeta,\ (x,\zeta)\in\R^d\times\scr P_0)$
\begin{align}\label{eq:4.18}
	\lim\limits_{t \rightarrow \infty} \int_{\mathbb R^d \times \scr P_0} \mathbf{P}_{t} G \, \d \tilde\Lambda = \int_{\mathbb R^d \times \scr P_0} G \, \d \Lambda,
\end{align}
for all $G\in C_b(\mathbb R^d \times \scr P_0)$, where the latter denotes the set of all bounded continuous functions on $\mathbb R^d \times \scr P_0$.
In this case $\Lambda$ is then obviously the unique $(\mathbf{P}_{t})$-invariant measure.
From the definition of $\mathbf{P}_{t}$ we now obtain the following characterization.

\begin{thm}
Assume \eqref{EQ432} and \eqref{eq:4.15} hold.
Then the following assertions are equivalent:
\begin{itemize}
	\item[(i)] $(\mathbf{P}_{t})_{t>0}$ is ergodic.
	\item[(ii)] There exist $\nu_\infty \in \scr P$, $\mu_\infty \in \scr P_0$ such that for every $(x, \zeta) \in \mathbb R^d \times \scr P_0$
				\begin{align*}
					\mu_{0,t}^\zeta \rightarrow \mu_\infty \text{ and } \nu_{0,t}^{\zeta, \delta_x} \rightarrow \nu_\infty \text{ weakly as } t \rightarrow \infty.
				\end{align*}
\end{itemize}
In this case $\mu_\infty$ and $\nu_\infty$ are uniquely determined and the unique $(\mathbf{P}_{t})$-invariant measure $\Lambda$ is given by $\Lambda = \nu_\infty \times \delta_{\mu_\infty}$.
Furthermore, if $b = \bar b$ and $\sigma = \bar \sigma$, then $\mu_\infty = \nu_\infty$.
\end{thm}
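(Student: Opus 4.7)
The starting point is the explicit formula
\[
\mathbf{P}_t(x,\zeta;\cdot)=\nu_{0,t}^{\zeta,\delta_x}\times\delta_{\mu_{0,t}^\zeta}
\]
from \eqref{EQ433} and \eqref{eq:4.16}, which identifies
\[
\mathbf{P}_tG(x,\zeta)=\int_{\R^d}G\bigl(y,\mu_{0,t}^\zeta\bigr)\,\nu_{0,t}^{\zeta,\delta_x}(\d y),\qquad G\in\scr B_b(\R^d\times\scr P_0).
\]
Since ergodicity may be tested against Dirac initial data $\tilde\Lambda=\delta_{(x,\zeta)}$, both directions of the theorem reduce to translating weak convergence of the product measure $\nu_{0,t}^{\zeta,\delta_x}\times\delta_{\mu_{0,t}^\zeta}$ on $\R^d\times\scr P_0$ into weak convergence of each of its two factors.

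For (ii) $\Rightarrow$ (i), I would set $\Lambda:=\nu_\infty\times\delta_{\mu_\infty}\in\scr P(\R^d\times\scr P_0)$. The hypothesis gives $\nu_{0,t}^{\zeta,\delta_x}\to\nu_\infty$ weakly in $\scr P$ and $\mu_{0,t}^\zeta\to\mu_\infty$ weakly in $\scr P$, the latter implying $\delta_{\mu_{0,t}^\zeta}\to\delta_{\mu_\infty}$ weakly in $\scr P(\scr P_0)$. The standard fact that weak convergence is preserved under products of probability measures then yields $\mathbf{P}_t(x,\zeta;\cdot)\to\Lambda$ weakly on $\R^d\times\scr P_0$, so $\mathbf{P}_tG(x,\zeta)\to\int G\,\d\Lambda$ for every $G\in C_b(\R^d\times\scr P_0)$ and every $(x,\zeta)$. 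The extension to general $\tilde\Lambda\in\scr P(\R^d\times\scr P_0)$ then follows by dominated convergence, using $|\mathbf{P}_tG|\le\|G\|_\infty$.

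For (i) $\Rightarrow$ (ii), I would apply ergodicity with $\tilde\Lambda=\delta_{(x,\zeta)}$. Testing with $G(y,\mu)=g(y)$, $g\in C_b(\R^d)$, yields $\nu_{0,t}^{\zeta,\delta_x}\to\nu_\infty$ weakly, where $\nu_\infty$ is the first marginal of $\Lambda$ (necessarily independent of $(x,\zeta)$). Testing with $G(y,\mu)=h(\mu)$, $h\in C_b(\scr P_0)$, yields $\delta_{\mu_{0,t}^\zeta}\to\Lambda_2$ weakly in $\scr P(\scr P_0)$, where $\Lambda_2$ is the second marginal of $\Lambda$. The key step is to conclude that $\Lambda_2=\delta_{\mu_\infty}$ for some $\mu_\infty\in\scr P_0$: by Prokhorov's theorem $\{\delta_{\mu_{0,t}^\zeta}\}$ is tight in $\scr P(\scr P_0)$, which, since these measures are Dirac, forces $\{\mu_{0,t}^\zeta\}$ to be relatively compact in $\scr P$; any accumulation point $\mu$ satisfies $\delta_\mu=\Lambda_2$ by continuity of $\nu\mapsto\delta_\nu$ and lies in $\scr P_0$ because $\Lambda_2\in\scr P(\scr P_0)$, so such $\mu$ is unique and $\mu_{0,t}^\zeta\to\mu_\infty:=\mu$ weakly. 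Hence $\Lambda=\nu_\infty\times\delta_{\mu_\infty}$, and uniqueness of the weak limits $\mu_\infty$ and $\nu_\infty$ is automatic.

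Finally, for the assertion with $b=\bar b$, $\sigma=\bar\sigma$, I would combine Remark \ref{rem22} (giving $\nu_{0,t}^{\zeta,\zeta}=\mu_{0,t}^\zeta$) with Lemma \ref{lem47} and Theorem \ref{prp41} (which together yield $\nu_{0,t}^{\zeta,\zeta}=\int_{\R^d}\nu_{0,t}^{\zeta,\delta_x}\,\zeta(\d x)$) to obtain
\[
\mu_{0,t}^\zeta=\int_{\R^d}\nu_{0,t}^{\zeta,\delta_x}\,\zeta(\d x).
\]
Integrating any $g\in C_b(\R^d)$ and letting $t\to\infty$, via bounded convergence, then gives $\int g\,\d\mu_\infty=\int g\,\d\nu_\infty$, i.e.\ $\mu_\infty=\nu_\infty$. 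The main obstacle in the plan is the Dirac-locus argument inside (i) $\Rightarrow$ (ii), namely showing that a weak limit of the Dirac measures $\delta_{\mu_{0,t}^\zeta}$ in $\scr P(\scr P_0)$ is itself a Dirac measure supported in $\scr P_0$; the remaining steps are routine consequences of the definition of $\mathbf{P}_t$ and of standard results on weak convergence of product measures.
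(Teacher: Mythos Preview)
Your proposal is correct and follows essentially the same route as the paper: both arguments exploit the product structure $\mathbf{P}_t(x,\zeta;\cdot)=\nu_{0,t}^{\zeta,\delta_x}\times\delta_{\mu_{0,t}^\zeta}$, test ergodicity on Dirac initial data, and reduce (i)$\Rightarrow$(ii) to identifying the two marginals of the invariant measure $\Lambda$. The only substantive difference is in the step you flag as the ``main obstacle''. Where you invoke Prokhorov's theorem to conclude relative compactness of $\{\mu_{0,t}^\zeta\}$ (which needs a little care, since $\scr P_0$ is merely a Borel subset of the Polish space $\scr P$, so one should first extend $\Lambda_2$ by zero and work in $\scr P(\scr P)$), the paper bypasses compactness entirely: it sets $\mu_\infty(A):=\int_{\scr P_0}\mu(A)\,\Lambda_2(\d\mu)$ and notes that the linear maps $F(\mu)=\mu(h)$, $h\in C_b(\R^d)$, belong to $C_b(\scr P)$, so weak convergence $\delta_{\mu_{0,t}^\zeta}\to\Lambda_2$ gives $\mu_{0,t}^\zeta(h)=\delta_{\mu_{0,t}^\zeta}(F)\to\Lambda_2(F)=\mu_\infty(h)$ directly, whence $\mu_{0,t}^\zeta\to\mu_\infty$ weakly and $\Lambda_2=\delta_{\mu_\infty}$. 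Your treatment of the case $b=\bar b$, $\sigma=\bar\sigma$ via $\nu_{0,t}^{\zeta,\zeta}=\mu_{0,t}^\zeta$ and the mixture formula $\nu_{0,t}^{\zeta,\zeta}=\int\nu_{0,t}^{\zeta,\delta_x}\,\zeta(\d x)$ matches the paper's.
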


\begin{proof}
$(ii) \Rightarrow (i)$: This is easy to see from the definition of $\mathbf{P}_{t}$ in \eqref{eq:4.16} and \eqref{EQ433}, and the unique $(\mathbf{P}_{t})$-invariant measure is $\Lambda:= \nu_\infty \times \delta_{\mu_\infty}$.\\
$(i) \Rightarrow (ii)$: Let $\Lambda$ be the $(\mathbf{P}_t)$-invariant measure such that \eqref{eq:4.18} holds.
Let $\Pi_1 \colon \mathbb R^d \times \scr P_0 \longrightarrow \mathbb R^d$, and $\Pi_2 \colon \mathbb R^d \times \scr P_0 \longrightarrow \scr P_0$ be the canonical projections and define
\begin{align*}
	\nu_\infty := \Lambda \circ \Pi_1^{-1} (\in \scr P) \text{ and } \Lambda_2 := \Lambda \circ \Pi_2^{-1} (\in \scr P(\scr P_0)).
\end{align*}
Then for every $(x,\zeta) \in  \mathbb R^d \times \scr P_0$ as $t \rightarrow \infty$
\beq\label{DDE0}
	\nu_{0,t}^{\zeta, \delta_x} \longrightarrow \nu_\infty \text{ weakly in } \scr P
\end{equation}
and
\beq\label{DDE}
	\delta_{\mu_{0,t}^\zeta} \longrightarrow \Lambda_2\ \text{weakly in } \scr P(\scr P_0) \ ({\rm hence\ also\ in\ }\scr P(\scr P)\ {\rm by\ extending\ }\LL_2\ {\rm by\ zero}).
\end{equation}
We claim that \eqref{DDE} implies     $\Lambda_2 = \delta_{\mu_\infty}$ for some  $\mu_\infty \in \scr P_0$. Combining this with   \eqref{eq:4.18} for $\tt\Lambda=\zeta\times \dd_x$ and using \eqref{DDE0}, we conclude that $\LL=\nu_\infty\times \dd_{\mu_\infty}$ as desired.

To prove the claim, let $\mu_\infty\in\scr P$ be defined by
$$\mu_\infty(A):= \int_{\scr P_0}\mu(A)\LL_2(\d\mu)=\int_{\scr P}\mu(A)\LL_2(\d\mu),\ \ A\in \scr B(\R^d),$$where $\LL_2$ is extended to $\scr P$ by zero, i.e. $\LL_2(\scr P\setminus\scr P_0)=0.$
For any $h\in C_b(\R^d)$ we take $F(\mu)=\mu(h):=\int h\d\mu, \mu\in \scr P$.
Then $F\in C_b(\scr P)$. Since  $\delta_{\mu_{0,t}^\zeta} \to \Lambda_2 \text{ weakly in } \scr P(\scr P)$, we have
$$\lim_{t\to\infty}\mu_{0,t}^{\zeta}(h)= \lim_{t\to\infty}\dd_{\mu_{0,t}^{\zeta}}(F)
=\LL_2(F)=\mu_\infty(h).$$ So, $\mu_{0,t}^{\zeta}\to\mu_\infty$ weakly in $\scr P$, and hence  $  \dd_{\mu_{0,t}^{\zeta}}\to\dd_{\mu_\infty}$weakly in $\scr P(\scr P)$ as $t\to\infty$. Combining this with \eqref{DDE} we prove $\LL_2=\dd_{\mu_\infty}$ and $\mu_\infty\in \scr P_0$ since $\LL_2$ is supported on $\scr P_0$.

We note that for every $(\zeta,\theta)\in\scr P_0\times\scr P$
\begin{align*}
\nu_{0,t}^{\zeta,\theta}=\int \nu_{0,t}^{\zeta,\delta_x}\ \theta(\d x),\ \ t\geq0,
\end{align*}
since the second equation in \eqref{eq:2.2'}  is linear and well-posed. Hence from (ii) we have
\begin{align*}
\nu_{0,t}^{\zeta,\theta}\longrightarrow\nu_\infty\quad \text{weakly as $t\rightarrow\infty$.}
\end{align*}
Now assume that $b=\bar{b}$ and $\sigma=\bar\sigma$. Then for all $\zeta\in\scr P_0$
\begin{align*}
\nu_\infty&=w-\lim_{t\rightarrow\infty}\nu_{0,t}^{\zeta,\zeta}\\
&=w-\lim_{t\rightarrow\infty} \mu_{0,t}^\zeta\\
&=\mu_\infty,
\end{align*}
where we used Remark \ref{rem22} in the second equality. This completes the proof.
\end{proof}
\section{Application to nonlinear distorted Brownian motion}
In this section we want to apply our results to the so-called nonlinear distorted Brownian motion (NLDBM) in which case the nonlinear Fokker-Planck equation \eqref{eq:1.3} is a porous media equation perturbed by a nonlinear transport term. We shall give details below, but want to stress already now that in this case the solutions of \eqref{eq:1.3} are absolutely continuous with respect to Lebesgue measure $\d x$, if so is the initial condition, i.e.~, $u(\d x)=u(t,x)\d x $, $t\geq0$. Furthermore, in the case of NLDBM the coefficients $b$, $\sigma$ in \eqref{eq:2.0} (to be introduced below explicitly) depend `` Nemytski type''  on $\mu_t$, more precisely for $(t,x,\mu)\in[0,\infty)\times\R^d\times\scr P_0$, where
\begin{align}\label{EQ51}
\scr P_0:=\Big\{\mu\in\scr P|\; u_\mu:=\frac{\d \mu}{\d x}\in L^\infty(\R^d,\d x) \Big\},
\end{align}
we have
\begin{equation}\label{EQ52}
\begin{aligned}
b(t,x,\mu)&=\tilde{b}\Big(t,x,\frac{\d\mu}{\d x}(x)\Big),\\
\sigma(t,x,\mu)&=\tilde{\sigma}\Big(t,x,\frac{\d\mu}{\d x}(x)\Big)
\end{aligned}
\end{equation}
for some Borel measurable functions
\begin{align*}
\tilde{b}\colon[0,\infty)\times\R^d\times\R\longrightarrow\R^d,
\end{align*}
and
\begin{align*}
\tilde{\sigma}\colon[0,\infty)\times\R^d\times\R\longrightarrow\R^d\otimes\R^m.
\end{align*}
Of course, for $x\in\R^d$ we have to choose the $\d x$-version of $\frac{\d \mu}{\d x}$ in such a way that the maps
\begin{align*}
[0,\infty)\times\R^d\times\scr P_0\ni(t,x,\mu)&\longmapsto\tilde{b}\Big(t,x,\frac{\d \mu}{\d x}(x)\Big),\\
[0,\infty)\times\R^d\times\scr P_0\ni(t,x,\mu)&\longmapsto\tilde{\sigma}\Big(t,x,\frac{\d \mu}{\d x}(x)\Big)
\end{align*}
are Borel-measurable. But this is easily achieved by looking at the $\d x$-version obtained by defining $\frac{\d\mu}{\d x}\equiv0$ on the complement of its Lebesgue points. Below we shall always take this version without further mentioning it.\\

To introduce $\tilde{b}$ and $\tilde{\sigma}$ concretely in the case of NLDBM we consider maps $\beta\colon\R\rightarrow\R$, $D\colon\R^d\rightarrow\R^d$ and $b\colon\R\rightarrow\R$ satisfying the following hypotheses:
\begin{enumerate}[label=(H)]
\item \label{condH}
\begin{enumerate}[label=(\roman*)]
\item $\beta\in C^1(\R)$, $\beta(0)=0$, $\gamma\leq\beta'(r)\leq\gamma_1$, $\forall r\in\R$, for $0<\gamma<\gamma_1<\infty$.
\item $b\in C_b(\R)\cap C^1(\R)$.
\item $D\in C_b(\R^d;\R^d)\cap W^{1,\infty}(\R^d;\R^d).$
\item $D= -\nabla\Phi$, where $\Phi\in C^1(\R^d)$, $\Phi\geq1$, $\underset{|x|_d\rightarrow\infty}{\lim}\Phi(x)=+\infty$ and there exists $m\in[2,\infty)$ such that $\Phi^{-m}\in L^1(\R^d)$.
\end{enumerate}
\end{enumerate}
A typical example for $\Phi$ is
\begin{align}\label{EQ53}
\Phi(x)=C(1+|x|^2)^\alpha,\ x\in\R^d,
\end{align}
with $\alpha\in(0,\frac12]$.\\
Then we have for the corresponding ($t$-independent) Kolmogorov operator \eqref{eq:1.3} for $\mu\in\scr P_0$:
\begin{align}\label{EQ54}
L_\mu h(x):=\frac12\frac{\beta(u_\mu(x))}{u_\mu(x)}\Delta h(x)+b(u_\mu(x))D(x)\cdot\nabla h(x),\ x\in\R^d,\ h\in C_0^2(\R^d),
\end{align}
where we set $\frac{\beta(0)}{0}:=\beta'(0)$, i.e.~compared to \eqref{eq:1.3} we have for $(t,x,\mu)\in[0,\infty)\times\R^d\times\scr P_0$
\begin{equation}\label{EQ55}
\begin{aligned}
\sigma(t,x,\mu)&=\frac{\beta(u_\mu(x))}{u_\mu(x)}\mathrm{Id},\\
b(t,x,\mu)&=b(u_\mu(x))D(x).
\end{aligned}
\end{equation}
Here $\mathrm{Id}$ denotes the identity matrix on $\R^d$. Hence \eqref{eq:1.2} becomes an equation for density $\mu(t,\cdot)=\frac{\d\mu_t}{\d x}$ and then reads as
\begin{align}\label{EQ56}
\partial_t u(t,x)=\frac12\Delta\beta(u(t,x))-\mathrm{div}(D(x)b(u(t,x))u(t,x)).
\end{align}
In this section we consider the case $b=\bar{b}$, $\sigma=\bar{\sigma}$. So, the first equation in \eqref{eq:2.2'}  is just \eqref{EQ56} and the second reads
\begin{align}\label{EQ57}
\partial_t\nu_t=\frac12\Delta\Big(\frac{\beta(u(t,\cdot))}{u(t,\cdot)}\nu_t\Big)-\mathrm{div}(D(\cdot)b(u(t,\cdot))\nu_t),
\end{align}
(as always in this paper) meant in the weak sense.

In particular, if $\zeta:=u_0\d x\in\scr P_0$ is the initial condition for the solution of \eqref{EQ56}, then according to our notation in Definition \ref{44DF1*}(2)(b) the solution to \eqref{EQ57} starting from $s\in[0,\infty)$ with $\theta\in\scr P_0$ is denoted by $\nu_{s,t}^{\zeta,\theta}$, $t\geq s$.\\
The corresponding McKean-Vlasov equation reads:
\begin{align}\label{EQ58}
\d X_t=b(\scr L_{X_t}(X_t))D(X_t)\d t + \frac12\frac{\beta(\scr L_{X_t}(X_t))}{\scr L_{X_t}(X_t)}\d W_t.
\end{align}
We recall that by Theorem \ref{thm24} we have that \eqref{EQ56} and \eqref{EQ58} are equivalent (with $\mu_t=\scr L_{X_t},\ t\geq0$). \eqref{EQ56} and consequently weak solutions of \eqref{EQ58} have been analyzed in \cite{RWBR19}, \cite{RWVB19} and the solution process to \eqref{EQ58} has been named \textit{nonlinear distorted Brownian motion}, because in the linear case, i.e.~$b=$constant and $\beta=$identity, \eqref{EQ58} reduces to the SDE for distorted Brownian and \eqref{EQ54} to the corresponding linear Kolmogorov operator.

Now let
\begin{align*}
\scr A:= C([0,\infty)\rightarrow\scr P_0)\cap L^\infty([0,\infty)\times\R^d).
\end{align*}
Then obviously condition {\bf (P)} in Section 2 holds. The following is our main result on NLDBM.
\begin{thm}\label{thm51}
Assume {\rm\ref{condH}(i)-(iv)} hold. Then:
\begin{enumerate}
\item[$($i$)$] \eqref{EQ58} is weakly well-posed in $(\scr P_0,\scr A)$.
\item[$($ii$)$]  Nonlinear distorted Brownian motion has the Markov property. More precisely, for every $(s,\zeta)\in[0,\infty)\times\scr P_0$ the law $\P_{\zeta,(s,\zeta)}$ of the (unique weak) solution of \eqref{EQ58} started from $s$ at $\zeta$ satisfies \eqref{EQ435*} with $p$ as in \eqref{EQ434*}.
\end{enumerate}
\end{thm}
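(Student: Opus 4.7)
The plan is to deduce both parts from the general machinery of Section 4. Concretely, I will apply Corollary \ref{thm42} for (i) and Corollary \ref{cor46} for (ii); both of these require verifying \eqref{EQ431*} for the coupled system \eqref{eq:2.2'} in $((\scr P_0,\scr A),(\scr P_0,\scr A))$, specialized to $b=\bar b$, $\sigma=\bar\sigma$ as in \eqref{EQ55}. So the entire work reduces to establishing the well-posedness of this coupled system on $(\scr P_0,\scr A)$ defined in \eqref{EQ51}. Note that property {\bf (P)} for $(\scr P_0,\scr A)$ is already recorded just before the theorem, and $\scr P_0 = \tilde{\scr P}_0$ here, so Remark \ref{rem47} plus Corollary \ref{cor46} will give the claimed Markov property once well-posedness is in hand.

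First I would handle the nonlinear equation \eqref{EQ56}, i.e.\ condition (a) of Definition \ref{44DF1*}(2). Under \ref{condH} the existence and uniqueness of a solution $u(t,\cdot)$ with $u(0,\cdot) = u_\mu$, $\mu \in \scr P_0$, that remains in $L^\infty$ and produces a weakly continuous probability path in $\scr P_0$, together with the Borel measurability of the solution map in the initial datum, is precisely what has been established in \cite{RWBR19,RWVB19}. I would just quote their results to produce the flow $\zeta \mapsto \mu_{s,t}^\zeta$ with values in $\scr A$; no new analysis on this side is needed.

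The core new step is (b): well-posedness of the \emph{linearized} equation \eqref{EQ57}, i.e.\ uniqueness in $\scr A$ of a weakly continuous solution $(\nu_{s,t}^{\zeta,\theta})_{t\ge s}$ with $\nu_{s,s}^{\zeta,\theta}=\theta\in\scr P_0$, where $\mu_{s,t}^\zeta = u(t,\cdot) \d x$ is now frozen. Once $u(t,x)$ is fixed with $u \in L^\infty_{\rm loc}([0,\infty)\times\R^d)$, the coefficients of \eqref{EQ57} become \emph{non-random measurable} functions of $(t,x)$ only: the diffusion matrix is $\tfrac{1}{2}\tfrac{\beta(u(t,x))}{u(t,x)}\mathrm{Id}$, which by \ref{condH}(i) is bounded between $\tfrac{\gamma}{2}\mathrm{Id}$ and $\tfrac{\gamma_1}{2}\mathrm{Id}$ (setting the quotient equal to $\beta'(0)\in[\gamma,\gamma_1]$ on $\{u=0\}$), and the drift $b(u(t,x))D(x)$ is bounded by \ref{condH}(ii),(iii). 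Hence \eqref{EQ57} is a uniformly parabolic linear Fokker-Planck equation with bounded Borel-measurable coefficients. For such equations, existence, uniqueness, and Borel measurability in the initial condition in the class of absolutely continuous probability measures with bounded density follow from the classical linear Fokker-Planck theory (see \cite{RWBKRS15}), and the $L^\infty$-bound on $u_{\nu_{s,t}^{\zeta,\theta}}$ propagates from $t=s$ via parabolic maximum principle/heat kernel estimates, hence the solution path indeed lies in $\scr A$. Borel measurability in $(\zeta,\theta)$ is inherited, since the flow of the linear problem depends measurably on $u(\cdot,\cdot)$, which in turn depends measurably on $\zeta$ by step one.

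With (a) and (b) verified, \eqref{EQ431*} holds with $(\scr P_0,\scr A)=(\tilde{\scr P}_0,\tilde{\scr A})$. Theorem \ref{prp41}/Corollary \ref{thm42} then yield the weak well-posedness of \eqref{EQ58} in $(\scr P_0,\scr A)$, proving (i). Finally, Corollary \ref{cor46} produces the Markov property \eqref{EQ435*} for $\P_{\zeta,(s,\zeta)}$ with the kernel $p$ of \eqref{EQ434*}, proving (ii). The main obstacle I anticipate is the care needed in step (b): one has to make sure the $L^\infty$-class is really preserved and the uniqueness is established in exactly the class required by Definition \ref{44DF1*}(2)(b); the singular Nemytskii-type composition $x\mapsto u(t,x)$ in the coefficients is only measurable (and only defined $\d x$-a.e.), so I would take the pointwise representative described right after \eqref{EQ52} and then treat \eqref{EQ57} as a linear PDE with merely measurable (but uniformly elliptic and bounded) coefficients, which is where \cite{RWBKRS15} provides the decisive tools.
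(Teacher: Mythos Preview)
Your overall strategy coincides with the paper's: reduce both assertions to the restricted well-posedness \eqref{EQ431*} of the coupled system \eqref{eq:2.2'} (with $b=\bar b$, $\sigma=\bar\sigma$) in $((\scr P_0,\scr A),(\scr P_0,\scr A))$, and then invoke Corollary~\ref{thm42} for (i) and Corollary~\ref{cor46} for (ii). The paper's proof is exactly this, in two lines.

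The only point of divergence is how part (b) of Definition~\ref{44DF1*}(2) is established. The paper does not split (a) and (b) and argue the linearized equation separately via general linear Fokker--Planck theory from \cite{RWBKRS15}; instead it cites \cite[Proposition~2.2]{RWBR19} together with \cite[Theorems~2.1 and 3.1]{RWVB19} for the well-posedness of the \emph{entire} coupled system \eqref{eq:2.2'} in $(\scr P_0,\scr A)$ in one stroke. Your route via \cite{RWBKRS15} for the frozen linear equation \eqref{EQ57} is in principle viable (bounded, uniformly elliptic, measurable coefficients), but you would have to pin down precisely which uniqueness theorem there delivers uniqueness in the class $\scr A$ of absolutely continuous probability paths with uniformly bounded densities, and separately justify the $L^\infty$-propagation you invoke. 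The paper avoids this by relying on \cite{RWVB19}, which was written to furnish exactly the needed uniqueness (nonlinear and linearized) for this porous-media-with-transport setting; this buys a shorter argument at the cost of a more specialized citation, whereas your approach is more self-contained but carries a heavier verification burden for step (b).
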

\begin{proof}
(i): By \cite[Proposition 2.2]{RWBR19} and \cite[Theorems 2.1 and 3.1]{RWVB19} in the case of NLDBM we have well-posedness for \eqref{eq:2.2'}  in $(\scr P_0,\scr A)$ (as defined above). Hence the assertion follows by Corollary \ref{thm42} above.\\
(ii) is then a consequence of (i) and Corollary \ref{cor46}.
\end{proof}
\begin{rem}
If additionally we make the following assumptions:
\begin{enumerate}[label=$($H$)$]
\item \label{condH2}
\begin{enumerate}
	\item[$($v$)$] $b(r)\geq b_0>0$ for all $r\in\R$
	\item[$($vi$)$] $\gamma_1\Delta\Phi-b_0|\nabla\Phi|^2\leq0$,
\end{enumerate}
\end{enumerate}
then it has been proved in \cite{RWBR19} (see Theorems 6.1 and 6.5 in there) that \eqref{EQ56} has a stationary solution which is unique in a slightly restricted class of probability densities in $L^1(\R^d,\d x)$. Hence nonlinear distorted Brownian motion as a unique invariant measure in this class.
\end{rem}
\section{Exponential ergodicity of ${\bf P}_t$}
In this section, we let $b(t,x,\mu)=b(x,\mu)$ and $\si(t,x,\mu)=\si(x,\mu)$ do not depend on $t$, and consider the exponential convergence  of the Markov process generated by $\tilde{\HL}=\tilde{\HL}_t$ on the Wasserstein space
$$ \scr P_2:=\bigg\{\mu\in \scr P:  \|\mu\|_2:=\bigg(\int_{\R^d} |x|^2\mu(\d x)\bigg)^{\ff 1 2 } <\infty\bigg\}.$$ To this end, we will take $\scr P_0=\tt{\scr P_0}=\scr P_2$,
which is a Polish space under the Wasserstein distance
 $$\W_2(\mu,\nu):= \inf_{\pi\in \C(\mu,\nu)} \bigg(\int_{\R^d\times\R^d} |x-y|^2\pi(\d x,\d y)\bigg)^{\ff 1 2},$$
 where $\C(\mu,\nu)$ is the set of couplings for $\mu$ and $\nu$.

 We will need the following linear growth and monotone conditions.
\beg{enumerate} \item[{\bf (A)}] $b,\bar b,\si,\bar \si$ are continuous on $\R^d\times\scr P_2$ and there exist constants $K,\ll,\kk,\bar\ll,\bar\kk\ge 0 $ such that for any $(x,\mu),(y,\nu)\in   \R^d\times \scr P_2,$ we have
\beq\label{A-1}  \big\{|b|+\|\si\| +|\bar b|+\|\bar \si\| \big\}(x,\mu) \le K (1+|x|+\|\mu\|_2),\end{equation}
 \beq\label{44LST}  2\<b(x,\mu)- b(y,\nu), x-y\> +\|\si(x,\mu)-\si(y,\nu)\|_{HS}^2
 \le \kk \W_2(\mu,\nu)^2 -\ll |x-y|^2,  \end{equation}
\beq\label{44LST'}  2\<\bar b(x,\mu)- \bar b(y,\nu), x-y\> +\|\bar\si(x,\mu)-\bar\si(y,\nu)\|_{HS}^2
 \le \bar\kk \W_2(\mu,\nu)^2 -\bar\ll |x-y|^2.  \end{equation}
  \end{enumerate}
According to \cite[Theorem 2.1]{RWRW18}, under {\bf (A)} both the SDE \eqref{eq:1.8} and the second equation in \eqref{44EE} are   well-posed for initial distributions in $\scr P_2$.
 We denote $P_t^*\zeta=\L_{X_t} $ for $X_t$ solving \eqref{eq:1.8} with $\L_{X_0}=\zeta\in \scr P_2$.
 Then by Theorem \ref{thm24}, the coupled equation \eqref{44EE} is well-posed for
 $$\scr P_0=\tt{\scr P_0}=\scr P_2,\ \ \scr A=\tt{\scr A}= C([0,\infty)\to\scr P_2),$$
 with $\mu_t= P_t^*\mu_0$ for $\mu_0=\zeta, t\ge 0.$ We denote by $(X_t^{\zeta,x})_{t\ge 0}$ the solution to the second equation in \eqref{44EE} starting at $x$. Then we have
 \beq\label{PPS} \d X_t^{\zeta,x}=  \bar b( X_t^{\zeta,x}, P_t^*\zeta)\d t+ \bar\si(  X_t^{\zeta,x},P_t^*\zeta)\d W_t,\ \ X_0^{\zeta,x}=x.\end{equation}

 Thus, Theorem \ref{prp41} implies condition \eqref{EQ432}, so that
   by Proposition \ref{P4.8} we see that \eqref{EQ433} gives a time-homogenous Markov transition kernel
\beq\label{44BMN}{\bf P}_t(x,\zeta;\cdot):={\bf P}_{0,t}(x,\zeta;\cdot)=\L_{X_t^{\zeta,x}}\times \dd_{P_t^*\zeta},\ \   t\ge  0, (x,\zeta)\in \R^d\times \scr P_2,\end{equation}
 generated by
 $\tilde{\HL}(=\tilde{\HL}_t,t\ge 0)$. For any $\nu,\mu\in \scr P_2$,   the distribution of the $\tilde{\HL}$-diffusion process $(X_t,\mu_t)$ at time $t$ with $\L_{X_0}=\theta$ and $\mu_0=\zeta$ is given by
 \beq\label{44BMO} \PP_t (\theta,\zeta;\cdot):=\int_{\R^d}  \PP_t (x,\zeta;\cdot)\theta(\d x)
 =  \L_{X_t^{\zeta,\theta}}\times \dd_{P_t^*\zeta},\ \ \zeta,\theta\in \scr P_2,t\ge 0,\end{equation} where
 $X_t^{\zeta,\theta}$ solves \eqref{44E2} for $s=0$, i.e.
\beq\label{*QP} \d X_t^{\zeta,\theta}=  \bar b(X_t^{\zeta.\theta}, P_t^*\zeta)\d t+ \bar\si( X_t^{\zeta,\theta},P_t^*\zeta)\d W_t,\ \ \L_{X_0^{\zeta,\theta}}=\theta. \end{equation}

 Let $\W_2^\rr$ be the $L^2$-Wasserstein distance induced by the following metric on $\R^d\times \scr P_2$:
 $$\rr((x,\mu), (y,\nu)):=\ss{|x-y|^2+\W_2(\mu,\nu)^2}.$$ Then  for any two probability measures $\LL_1,\LL_2$ on $E:=\R^d\times \scr P_2$,
 $$\W_2^\rr(\LL_1,\LL_2)^2:= \inf_{\Pi\in \C(\LL_1,\LL_2)} \int_{E\times E} \rr((x,\mu), (y,\nu))^2 \Pi((\d x,\d\mu); (\d y,\d\nu)),$$
 where $\C(\LL_1,\LL_2)$ is the set of all couplings of $\LL_1$ and $\LL_2$.

\beg{thm}\label{44T4.2} Assume {\bf (A)}. If   $\ll>\kk\ge 0$,
then $\PP_t$ has a unique invariant probability measure $\LL=\nu_\infty\times\dd_{\mu_\infty}$ for some $\mu_\infty,\nu_\infty\in \scr P_2$, such that  for any  $\zeta,\theta\in \scr P_2$,
$$ \W_2^{\rr} (  \PP_t (\theta,\zeta;\cdot), \LL)^2\le \W_2(\zeta,\mu_\infty)^2\bigg(\e^{-(\ll-\kk)t} +\ff{\bar\kk(\e^{-(\ll-\kk)t}-\e^{-\bar\ll t})}{\kk+\bar\ll-\ll}\bigg) +   \W_2(\theta,\nu_\infty)^2\e^{-\bar\ll t},\ \ t\ge 0,$$ where when $\kk+\bar\ll=\ll,$
    $$\ff{\e^{-(\ll-\kk)t}- \e^{-\bar\ll t} }{\kk+\bar\ll-\ll}:=  t\e^{-\bar\ll t},\ \ t\ge 0.$$
Consequently,   the  unique solution  $(\mu_t,\nu_t)_{t\ge 0}$ to $\eqref{eq:1.10}$
with $\mu_0,\nu_0\in \scr P_2$    satisfies
\beg{align*}&\W_2(\mu_t,\mu_\infty)^2+ \W_2(\nu_t,\nu_\infty)^2\\
&\le \W_2(\mu_0,\mu_\infty)^2\bigg(\e^{-(\ll-\kk)t} +\ff{\bar\kk(\e^{-(\ll-\kk)t}-\e^{-\bar\ll t})}{\kk+\bar\ll-\ll}\bigg) + \W_2(\nu_0,\nu_\infty)^2 \e^{-\bar\ll t},\ \ t\ge 0.\end{align*}   \end{thm}

\beg{proof} It suffices to prove the first assertion.

Firstly, by \cite[Theorem 3.1]{RWW18}, {\bf (A)}   implies that   $P_t^*$ has a unique invariant probability measure $\mu_\infty$ such that
\beq\label{44LST3} \W_2(P_t^*\zeta,\mu_\infty)^2\le \e^{-(\ll-\kk)t} \W_2(\zeta,\mu_\infty)^2,\ \ t\ge 0, \zeta\in \scr P_2.\end{equation}

Next, by \eqref{44LST'}, there exist constants $c_1,c_2>0$ such that
$$2\<\bar b(x,\mu_\infty), x\>+\|\bar\si(x,\mu_\infty)\|_{HS}^2\le c_1-c_2 |x|^2,\ \ x\in\R^d.$$ It is standard that this implies the existence of an invariant probability measure $ \nu_\infty$ of the diffusion process $\bar X_t$ associated with the SDE
\beq\label{NNE} \d \bar X_t= \bar b(\bar X_t,\mu_\infty)\d t+ \bar\si(\bar X_t,\mu_\infty) \d W_t.\end{equation}
Take an $\F_0$-measurable random variable $(\bar X_0, X_0^{\zeta,\theta})$ on $\R^d\times \R^d$, such that $\L_{(\bar X_0, X_0^{\zeta,\theta})}\in \scr C(\nu_\infty,\theta)$ and
\beq\label{*YYW} \E|\bar X_0-X_0^{\zeta,\theta}|^2 =\W_2(\theta,\nu_\infty)^2.\end{equation}
Let $(\bar X_t)_{t\ge 0}$ solve \eqref{NNE} with initial value $\bar X_0$ which has law $\nu_\infty$. Since $\nu_\infty$  is the invariant probability measure for the solution, we have
\beq\label{44IIN} \L_{\bar X_t}= \nu_\infty,\ \ t\ge 0.\end{equation}
So,
\beq\label{SPPN} \W_2(\L_{X_t^{\zeta,\theta}},\nu_\infty)^2\le \E|X_t^{\zeta,\theta}-\bar X_t|^2,\ \ t\ge 0.\end{equation}

On the other hand, let $\LL= \nu_\infty\times\dd_{\mu_\infty}$. Then \eqref{44BMO} implies
\beq\label{*WYY} \W_2({\bf P}_t(\theta,\zeta;\cdot),\LL)^2 \le \W_2(P_t^*\zeta,\mu_\infty)^2 +\W_2(\L_{X_t^{\zeta,\theta}}, \nu_\infty)^2,\ \ t\ge 0.\end{equation}
By \eqref{44LST'}, \eqref{*QP}, \eqref{NNE} and   It\^o's formula, we obtain
\beg{align*}  &\d |X_t^{\zeta,\theta}-\bar X_t|^2  \\
&= \Big\{2 \big\<\bar b(X_t^{\zeta,\theta}, P_t^*\zeta)- \bar b(\bar X_t, \mu_\infty), X_t^{\zeta,\theta}-\bar X_t\big\>+\|\bar \si(X_t^{\zeta,\theta}, P_t^*\zeta)- \bar \si(\bar X_t,\mu_\infty)\|_{HS}^2\Big\}\d t  +\d M_t\\
&\le \big\{\bar \kk \W_2(P_t^*\zeta,\mu_\infty)^2 -\bar \ll |X_t^{\zeta,\theta}-\bar X_t|^2\big\}\d t  +\d M_t,  \ \ t\ge 0,\end{align*}  where
$$\d M_t:= 2 \big\<(\bar\si(X_t^{\zeta,\theta}, P_t^*\zeta)-\bar\si(\bar X_t,\mu_\infty))\d W_t, X_t^{\zeta,\theta}-\bar X_t\big\>$$
is a martingale.  Combining this with \eqref{44LST3}, \eqref{44IIN} and \eqref{*YYW}, we get
\beq\label{44LST4}\beg{split} \E |X_t^{\zeta,\theta}-\bar X_t|^2&\le \e^{-\bar\ll t} \E|\bar X_0-X_0^{\zeta,\theta}|^2   +\bar \kk  \W_2(\zeta,\mu_\infty)^2
\int_0^t \e^{-\bar \ll(t-s)-(\ll-\kk)s} \d s \\
&=   \e^{-\bar\ll t} \W_2(\nu,\nu_\infty)^2   + \ff{\bar\kk   \W_2(\zeta,\mu_\infty)^2 }{\kk+\bar\ll-\ll} \big(\e^{-(\ll-\kk)t}- \e^{-\bar\ll t}\big),
  \end{split}\end{equation} where when $\kk+\bar\ll=\ll,$
    $$\ff{\e^{-(\ll-\kk)t}- \e^{-\bar\ll t} }{\kk+\bar\ll-\ll}:=\e^{-\bar\ll t}   \lim_{s\to \bar\ll} \ff{\e^{(\bar\ll-s)t}-1}{\bar\ll-s} = t\e^{-\bar\ll t}.$$
Combining \eqref{44LST4}  with \eqref{44LST3}, \eqref{SPPN} and \eqref{*WYY},   we derive
\beg{align*}  \W_2 (  \PP_t (\theta,\zeta;\cdot), \LL)^2\le
  \W_2(\zeta,\mu_\infty)^2\bigg(\e^{-(\ll-\kk)t} +\ff{\bar\kk(\e^{-(\ll-\kk)t}-\e^{-\bar\ll t})}{\kk+\bar\ll-\ll}\bigg) +\e^{-\bar\ll t} \W_2(\theta,\nu_\infty)^2,\ \ t\ge 0.\end{align*}
As a consequence, $\LL$ is the unique invariant probability measure for the $\HL$-diffusion process. \end{proof}

\section{Feynman-Kac formula for PDEs on $\R^d\times \scr P_2$}

In this section we aim to solve the PDE \eqref{44PDE} by using the diffusion process generated by $\tilde{\HL}_t$.
To this end, we   first recall the notion of intrinsic/$L$-derivative, and  present some classes of differentiable functions on $\scr P_2$, see Appendix below for a geometry explanation which implies that the class of cylindrical functions $\F C_b^2(\scr P)$ is included in $C_b^{(1,1)}(\scr P_2).$

\beg{defn}    Let  $f$ be a   real function on $\scr P_2$, and let $\Id:\R^d\to\R^d$ be the identity map. \beg{enumerate} \item[$(1)$] We call $f$ intrinsicly differentiable at $\mu\in \scr P_2$, if
$$L^2(\R^d\to\R^d;\mu)\ni \phi\mapsto \nn^{\scr P}_\phi f(\mu):=\lim_{\vv\downarrow 0} \ff{f(\mu\circ (\Id+\vv\phi)^{-1})- f(\mu)}\vv$$
  is a well defined bounded linear functional. In this case, the intrinsic derivative is the unique element
  $\nn^{\scr P}f(\mu)\in L^2(\R^d\to\R^d;\mu)$ such that
  $$\<\nn^{\scr P}f(\mu),\phi\>_{L^2(\mu)} = \nn^{\scr P}_\phi f(\mu),\ \ \phi\in  L^2(\R^d\to\R^d;\mu).$$     $f$ is called  intrinsicly differentiable if it is intrinsicly differentiable at any $\mu\in \scr P_2$.
  \item[$(2)$] We call $f$ $L$-differentiable on $\scr P_2$ if it is intrinsically differentiable and $$\lim_{\|\phi\|_{L^2(\mu)}\downarrow 0} \ff{|f(\mu\circ({\rm Id} +\phi)^{-1})-f(\mu)-\nn^{\scr P}_\phi f(\mu)|}{\|\phi\|_{L^2(\mu)}}=0,\ \ \mu\in \scr P_2.$$
 Let $C^1 (\scr P_2)$  be the set of all  $L$-differentiable functions $f: \scr P_2\to\R$ with $\nn^{\scr P}f(\mu)(y)$ having a   jointly continuous version in $(\mu,y)\in \scr P_2\times \R^d$, and denote $f\in C_b^1(\scr P_2)$ if moreover $ \nn^{\scr P}f(\mu)(y)$ is bounded.
     \item[$(3)$] We write $f\in C^{(1,1)} (\scr P_2)$ if $f\in C^{1}(\scr P_2)$ and $\nn^{\scr P} f(\mu)(y)$ is  differentiable in $y$, such that  $\nn \{\nn^{\scr P} f(\mu)(\cdot)\}(y)$ is jointly continuous in $(\mu,y)\in \scr P_2\times \R^d$. If moreover $f\in C_b^1(\scr P_2)$ and  $\nn \{\nn^{\scr P} f(\mu)(\cdot)\}(y)$ is bounded, we  denote $f\in  C_b^{(1,1)} (\scr P_2)$ \end{enumerate} \end{defn}

\subsection{Main result}

We   will work with the  following class $C_b^{0,2,(1,1)}([0,T]\times \R^d\times \scr P_2)$.

 \beg{defn} We write $f\in   C_b^{0,2,(1,1)}([0,T]\times\R^d\times\scr P_2)$, if $f(t,x,\mu)$ is   continuous in $(t,x,\mu)\in [0,T]\times\R^d\times \scr P_2$,  $C^2$ in $x\in \R^d$, $C^{(1,1)}$ in $\mu\in\scr P_2$, such that the derivatives
 $$\nn f(t,x,\mu),\ \ \nn^2f(t,x,\mu),\ \ \nn^{\scr P}f(t,x,\mu)(y),\ \ \nn\{\nn^{\scr P}f(t,x,\mu)(\cdot)\}(y)$$ are bounded and jointly continuous in $(t,x,\mu,y)\in [0,T]\times\R^d\times\scr P_2\times\R^d$.    If moreover $\pp_t f(t,x,\mu)$ is continuous in $(t,x,\mu)\in [0,T]\times \R^d\times \scr P_2$, we denote
 $f\in C_b^{1,2,(1,1)}([0,T]\times \R^d\times \scr P_2)$; and write $f\in C_b^{2,(1,1)}(\R^d\times \scr P_2)$  if $f(t,x,\mu)$ does not depend on $t$.
\end{defn}

According to Section 4, for any $(x,\mu)\in \R^d\times \scr P_2$, the $\tilde{\HL}_t$-diffusion  process  $ (X_{s,t}^{\mu,x},  \mu_t)_{t\ge s}$  starting at $(x,\mu)$  generated by $\tilde{\HL}_t$ on $\R^d\times\scr P_2$ can be constructed as follows: $\mu_t=P_{s,t}^*\mu$ is the law of $X_t$ which is the unique solution of \eqref{eq:1.8} from time $s$ with $\L_{X_s}=\mu$, and $(X_{s,t}^{\mu,x})_{t\ge s}$ is the unique solution to the second equation in \eqref{44EE} with $X_{s,s}^{\mu,x}=x$.

The main result in this section is the following.

\beg{thm}\label{44T3.1} Assume that $b, \si,\bar b,\bar\si\in    C_b^{0,2,2}([0,T]\times\R^d\times \scr P_2).$ Then for any
$\V,\f\in   C_b^{0,2,(1,1)}(\R^d\times\scr P_2)$ where $\V$ is bounded, and for any  $\Phi\in  C_b^{2,(1,1)}(\R^d\times\scr P_2)$, the PDE $\eqref{44PDE}$ with $u(T,\cdot,\cdot)= \Phi$ has a unique solution $u\in  C_b^{1,2,(1,1)}([0,T]\times \R^d\times \scr P_2)$, and the solution is given by
\beq\label{44FMM} \beg{split}&  u(t,x,\mu)\\
&=  \E \bigg[\Phi(X_{t,T}^{\mu,x},P_{t,T}^*\mu)\e^{\int_t^T \V(r,X_{t,r}^{\mu,x},P_{t,r}^*\mu )\d r}
 +\int_t^T \f(r,X_{t,r}^{\mu,x},P_{t,r}^*\mu)\e^{\int_t^r \V(\theta, X_{t,\theta}^{\mu,x},P_{t,\theta}^*\mu)\d \theta} \d r\bigg]\end{split}\end{equation} for $ (t,x,\mu)\in [0,T]\times \R^d\times\scr P_2.$
\end{thm}

When $b=\bar b$ and $\si=\bar\si$  do not depend on $t$ and ${\bf V}=0$, this result is included by  \cite[Theorem 9.2]{RWLJ} under slightly strongly conditions where the class $C_b^{0,2,(1,1)}$ is replaced by
 $C_{b,Lip}^{0,2,(1,1)}$: $f\in C_{b,Lip}^{0,2,(1,1)}$ means it is in $C_b^{0,2,(1,1)}$  such that  $$\nn f(t,x,\mu),\ \ \nn^2f(t,x,\mu),\ \ \nn^{\scr P}f(t,x,\mu)(y),\ \ \nn\{\nn^{\scr P}f(t,x,\mu)(\cdot)\}(y)$$ are Lipschtiz continuous in $(x,\mu)\in \R^d\times\scr P_2$. Moreover, \cite[Theorem 9.2]{RWLJ}  generalizes (with jump) and improves (under weaker conditions) the corresponding  earlier results in \cite{RWLP, RWChass, RWCM}.

\subsection{Proof of Theorem \ref{44T3.1}}

We first recall the following result taken from \cite{RWLJ} (see also \cite{RWLP, RWHL}), which was proved for time  independent coefficients $b=\bar b$ and $\si=\bar \si$, but the proof obviously works for the present time dependent  coefficients, since all calculations therein only rely on the regularity of coefficients in the space-distribution variables $(x,\mu)$ but  has nothing to do with derivatives in time.

\beg{lem}\label{44L00} Let  $b,\si,\bar b,\bar \si \in C_b^{0,2,2}([0,T]\times\R^d\times \scr P_2)$. Then  $$ \nn X_{s,t}^{\mu,\cdot}(x),\   \nn^2 X_{s,t}^{\mu,\cdot}(x),\ \nn^{\scr P}X_{s,t}^{\cdot,x}(\mu)(y),\ \nn\{\nn^{\scr P} X_{s,t}^{\cdot,x}(\mu)(\cdot)\}(y)$$ are  jointly  continuous  in
$(t,x,\mu,y)\in [s,T]\times\R^d\times\scr P_2\times\R^d,$  and  there exists a constant $c>0$ such that
$$  \E \big(\|\nn X_{s,t}^{\mu,\cdot}(x)\|^2+ \|\nn^2 X_{s,t}^{\mu,\cdot}(x)\|^2+ \|\nn^{\scr P} X_{s,t}^{\cdot,x}(\mu)\|_{L^2(\mu)},\ \|\nn\{\nn^{\scr P} X_{s,t}^{\cdot,x}(\mu)\}\|_{L^2(\mu)} \big)\le c,$$
  holds for all $0\le s\le T$ and $(x,\mu)\in \R^d\times \scr P_2$.\end{lem}

Replacing $(\bar b,\bar \si)$ in \eqref{44E2} by $(b,\si)$, we consider the SDE
 \beq\label{44E*} \d Y_{s,t}^{\mu,x}= b(t, Y_{s,t}^{\mu,x}, P_{s,t}^*\mu)\d t + \si(t, Y_{s,t}^{\mu,x}, P_{s,t}^*\mu)\d W_t,\ \ Y_{s,s}^{\mu,x}=x.\end{equation}  Then   the Markov property of the solution implies
  \beq\label{44MKK} P_{s,t}^*\mu=\int_{\R^d} \L_{Y_{s,t}^{\mu,x}}\mu(\d x),\end{equation}where
 $P_{s,t}^*\mu:= \L_{X_t}$ for $X_t$ solving \eqref{eq:1.8} from time $s$     with $\L_{X_s}=\mu$.
 Combining this with Lemma \ref{44L00}, which applies to $Y_{s,t}^{\mu,x}$ replacing $X_{s,t}^{\mu,x}$ by taking $(\bar b,\bar\si)=(b,\si)$,  we have the following result.

 \beg{lem}\label{44LM} Let $b,\si  \in    C_b^{0,2,(1,1)}([0,T]\times\R^d\times \scr P_2)$. Then the assertion in Lemma $\ref{44L00} $ holds for $Y_{s,t}^{\mu,x}$ replacing $X_{s,t}^{\mu,x},$  and
 for any $f\in C_b^{(1,0)}(\scr P_2)$,
\beq\label{44DRR2} \beg{split} \nn^{\scr P} f(P_{s,t}^* \cdot)(\mu)(y)= & \int_{\R^d} \E\Big[ \big\{\nn^{\scr P}Y_{s,t}^{\cdot,z}(\mu)(y)\big\}(\nn^{\scr P} f)(P_{s,t}^*\mu)(Y_{s,t}^{\mu,z})\Big]\mu(\d z)\\
&+ \E \Big[\big\{\nn Y_{s,t}^{\mu,\cdot}(y)\big\}(\nn^{\scr P} f)(P_{s,t}^*\mu)(Y_{s,t}^{\mu,y})\Big],\end{split}\end{equation}
where for  vectors $v_1,v_2\in\R^d$,
 $$\big\<\{\nn Y_{s,t}^{\mu,\cdot}(z)\} v_1, v_2\big\>:= \big\<\nn_{v_2} Y_{s,t}^{\mu,\cdot}(z), v_1\big\>,$$
 and $\big\{\nn^{\scr P}Y_{s,t}^{\cdot,z}(\mu) (\cdot) \big\} v_1\in L^2(\R^d\to \R^d;\mu)$ is defined by
 $$ \big\<  \big\{\nn^{\scr P}Y_{s,t}^{\cdot,z}(\mu) (\cdot) \big\} v_1,\phi\big\>_{L^2(\mu)}:= \big\<\nn^{\scr P}_\phi \big\{ Y_{s,t}^{\cdot,z}(\mu) (\cdot) \big\},v_1\big\>,\ \ \phi\in L^2(\R^d\to\R^d;\mu).$$
 \end{lem}
\beg{proof} Let $f\in C_b^{(1,0)}(\scr P_2),$ i.e. $f$ is $L$-differentiable with
$\nn^{\scr P}f(\mu)(y)$ having a bounded jointly continuous version. For a family of random variables $\{\xi^\vv: \vv \in [0,1)\}$ with $\dot \xi^0:=\ff{\d\xi^\vv}{\d\vv}\big|_{\vv=0}$ existing in $L^1(\P)$, we have
\beq\label{44DRR} \lim_{\vv\downarrow 0} \ff{ f(\L_{\xi(\vv)})- f(\L_{X^0})} \vv   = \E\<\nn^{\scr P} f(\L_{\xi(0)})(\xi(0)), \dot\xi^0\>.\end{equation}
See for instance   \cite[Proposition 3.1]{RWRW18b}, which is slightly extended from \cite[Proposition A.2]{RWHSS}.
Since   $P_{s,t}^*\mu= \int_{\R^d} \L_{Y_{s,t}^{\mu,z}}\mu(\d z),$ it follows form \eqref{44DRR} that for any $\phi\in L^2(\mu)$,
\beg{align*}\nn^{\scr P}_\phi f (P_{s,t}^*\cdot)(\mu) &=\ff{\d}{\d\vv} \bigg\{ f\bigg(\int_{\R^d} \L_{Y_{s,t}^{\mu\circ({\rm Id}+\vv \phi)^{-1},z}}\mu(\d z)\bigg)
  + \ff{\d}{\d\vv}  f\bigg(\int_{\R^d} \L_{Y_{s,t}^{\mu, z+\vv\phi(z)}}\mu(\d z)\bigg)\bigg\}\bigg|_{\vv=0} \\
&= \int_{\R^d}  \E \big\<(\nn^{\scr P}f)(P_{s,t}^*\mu)(Y_{s,t}^{\mu,z}), \nn^{\scr P}_\phi Y_{s,t}^{\cdot,z}(\mu)+ \nn_{\phi(z)} Y_{s,t}^{\mu,\cdot}(z) \big\> \mu(\d z) \\
&=  \E\int_{\R^d\times\R^d} \Big[\big\<\big\{\nn^{\scr P}  Y_{s,t}^{\cdot,z}(\mu) (\cdot) \big\} (\nn^{\scr P}f)(P_{s,t}^*\mu)(Y_{s,t}^{\mu,z}),\phi\>_{L^2(\mu)}\\
 &\qquad + \big\<\{\nn Y_{s,t}^{\mu,\cdot}(z)\} (\nn^{\scr P}f)(P_{s,t}^*\mu)(Y_{s,t}^{\mu,z}), \phi(z)\big\>\Big]\mu(\d z).
 \end{align*}
 Therefore, $f(P_{s,t}^*\cdot)$ is  intrinsic differentiable such that \eqref{44DRR2} holds. \end{proof}

 \beg{lem}\label{44LM3}  Assume that $b,\si,\bar b, \bar\si\in   C_b^{0,2,(1,1)}([0,T]\times\R^d\times \scr P_2)$. For $\V,\f\in C_b^{0,2,(1,1)}([0,T]\times\R^d\times \scr P_2)$,    let $u$ be in $\eqref{44FMM}$. Then $u\in  C_b^{0,2,(1,1)}([0,T]\times \R^d\times \scr P_2)$.\end{lem}

\beg{proof}  The proof is   more or less standard, but for  completeness we present below a brief proof.

Obviously, $u(t,x,\mu)$ is joint continuous in $(t,x,\mu)$. Next,
  by making derivatives in $(x,\mu)$ for $u$ in \eqref{44FMM}, we obtain
\beg{align*}  \nn^i u(t,\cdot,\mu)(x)= &\E\bigg[\e^{\int_t^T \V(X_{t,r}^{\mu,x}, P_{t,r}^*\mu)\d r}\bigg(\<\nn^i \Phi(\cdot,P_{s,T}^*\mu)(X_{t,T}^{\mu,x}),\nn^i X_{t,T}^{\mu,\cdot}(x)\>\\
&  + \Phi(X_{t,T}^{\mu,x},P_{s,T}^*\mu)\int_t^T \<\nn^i \V(\cdot,P_{s,r}^*\mu)(X_{t,r}^{\mu,x}),\nn^i X_{t,r}^{\mu,\cdot}(x)\>\d r\bigg) \\
&  + \int_t^T  \e^{\int_t^r \V(\theta, X_{t,\theta}^{\mu,x}, P_{t,\theta}^*\mu)\d \theta}\bigg((\<\nn^i \f(\cdot,P_{s,r}^*\mu)(X_{t,r}^{\mu,x}),\nn^i X_{t,r}^{\mu,\cdot}(x)\> \d r \\
& +   \f(X_{t,r}^{\mu,x},P_{s,r}^*\mu)  \int_t^r \big\<\nn^i \V(\theta, \cdot,  P_{t,\theta}^*\mu)(X_{t,\theta}^{\mu,x})\d \theta,\nn^i X_{t,\theta}^{\mu,\cdot}(x)\big\>\d\theta \bigg)\d r \bigg],\ \ i=1,2.\end{align*}
and
\beg{align*} & \nn^{\scr P} u(t,x,\cdot)(\mu)(y)\\
&= \E\bigg[\e^{\int_t^T \V(X_{t,r}^{\mu,x}, P_{t,r}^*\mu)\d r}\bigg(\big\<\nn^{\scr P} \Phi(X_{t,T}^{\cdot,x}, P_{t,T}^*\mu)(\mu)(y),\nn^{\scr P} X_{t,T}^{\cdot,x}(\mu)(y)\big\>  \\
&  + \Phi(X_{t,T}^{\mu,x},P_{s,T}^*\mu)\int_t^T \big\<\nn^{\scr P} \V(X_{t,r}^{\cdot,x},P_{s,r}^*\mu)(\mu)(y),\nn^{\scr P} X_{t,r}^{\cdot,x}(\mu) (y)\big\>\d r \\
& + \nn^{\scr P}\Phi(X_{s,t}^{x,\mu}, P_{s,t}^*\cdot)(\mu)(y) + \Phi(X_{t,T}^{x,\mu},P_{s,T}^*\mu)\int_t^T \nn^{\scr P} \V(X_{t,r}^{x,\mu},P_{s,r}^*\cdot)(\mu)(y)  \d r \bigg)\\
 &  +    \int_t^T \e^{\int_t^r \V(\theta, X_{t,\theta}^{\mu,x}, P_{t,\theta}\mu)\d\theta} \bigg(\big\<\nn^{\scr P} \f(X_{t,r}^{\cdot,x},P_{s,r}^*\mu)(\mu),\nn^{\scr P} X_{t,r}^{\cdot,x}(\mu)\big\>(y)+\nn^{\scr P} \f (r,X_{s,r}^{\mu,x},P_{s,r}^*\cdot)(\mu)(y)\\
 &+ \f(r,X_{t,r}^{\mu,x},P_{t,r}^*\mu)\int_t^r \big\{\nn^{\scr P} \V(\theta, X_{t,\theta}^{\cdot,x}, P_{t,\theta}\mu )(\mu)(y) + \nn^{\scr P} \V(\theta, X_{t,\theta}^{\mu,x}, P_{s,\theta}\cdot)(\mu)(y)\big\} \d\theta\bigg) \d r  \bigg].
 \end{align*} Combining these with Lemma \ref{44LM}, we  finish the proof.
\end{proof}

We will need the following lemma, which follows from   \cite[Theorem 3.3]{RWChass} or \cite[Proposition A.6]{RWHSS} under the  stronger  condition
\beq\label{44*AD}  \int_s^T   \E\big(|\aa_t|^2+\|\bb_t\|^4\big)\d t <\infty,\ \ T\in (s,\infty),\end{equation}
where $\|\cdot\|$ is the operator norm of matrices.

\beg{lem}\label{44LLN} Let $\aa: [s,\infty)\to \R^d$ and $\bb: [s,\infty)\to\R^{d\otimes m}$ be progressively measurable with
\beq\label{44**AD} \E\bigg(\int_s^T  |\aa_t|\d t\bigg)^2+\E\int_s^T\|\bb_t\|^2 \d t <\infty,\ \ T\in (s,\infty).\end{equation} For $X_s\in L^2(\OO\to\F_s;\P)$, let
$\mu_t=\L_{X_t}$ for  $$X_t:=X_s+\int_s^t \aa_r\d r+ \int_s^t \bb_r\d W_r,\ \ t\ge s.$$ Then $\mu_\cdot\in C([s,\infty)\to\scr P_2)$ and    for any $f\in   C_b^{(1,1)}(\scr P_2)$,
$$\ff{ \d f(\mu_t)}{\d t}= \E   \Big[\ff 1 2  \big\<\bb_t\bb_t^*, \nn\{\nn^{\scr P} f (\mu_t)(\cdot)\}(X_t)\big\> _{HS}
 +\big\<\aa_t,  (\nn^{\scr P} f(\mu_t))(X_t)\big\>\Big],\ \ t\ge s.$$
\end{lem}
\beg{proof} Since $\mu_s\in \scr P_2$ and \eqref{44**AD} holds, it is easy to see that $\mu_\cdot\in C([s,\infty)\to\scr P_2)$.  For any $n\ge 1$, let $\aa_t^n=\aa_t1_{\{|\aa_t|\le n\}},\ \bb_t^n=\bb_t1_{\{\|\bb_t\|\le n\}},$ and let $\mu_t^n=\L_{X_t^n}$ for
$$X_t^n:=X_s+\int_s^t \aa_r^n\d r+ \int_s^t \bb_r^n\d W_r,\ \ t\ge s.$$
 Then
$$\lim_{n\to\infty}\sup_{t\in [s,T]} W_2(\mu_t^n,\mu_t)=0$$ and   \eqref{44*AD} holds for $(\aa_t^n,\bb_t^n)$ replacing $(\aa_t,\bb_t)$. So, by
 \cite[Theorem 3.3]{RWChass} or \cite[Proposition A.6]{RWHSS}, we obtain
 $$f(\mu_t^n)= f(\mu_s) +\E\int_s^t   \Big[\ff 1 2  \big\<\bb_r^n\{\bb_r^n\}^*, \nn\{\nn^{\scr P}f (\mu_r^n)(\cdot)\}(X_r^n)\big\> _{HS}
 +\big\<\aa_r^n,  (\nn^{\scr P} f(\mu_r^n)(X_r^n))\big\>\Big]\d r$$   for $t\ge s.$
Since $\nn^{\scr P}f(\mu)(y)$ and $\nn\{\nn^{\scr P}f(\mu)(\cdot)\}(y)$ are bounded and continuous in $(\mu,y)$, by \eqref{44**AD}  we may apply the dominated convergence theorem with $n\to\infty$ to derive   $$f(\mu_t)= f(\mu_s) +\E\int_s^t    \Big[\ff 1 2  \big\<\bb_r\bb_r^*, \nn\{\nn^{\scr P}f (\mu_r)(\cdot)\}(X_r)\big\> _{HS}
 +\big\<\aa_r,  (\nn^{\scr P} f(\mu_r)(X_r))\big\>\Big]\d r,\ \ t\ge s.$$ Then the proof is finished.
\end{proof}
We now  apply Lemma \ref{44LLN} to prove the following It\^o's formula  for $(X_{s,t}^{x,\mu}, P_{s,t}^*\mu)$.

 \beg{lem}\label{44LM2} Assume that there exists an increasing function $K: [0,\infty)\to (0,\infty)$ such that
\beq\label{AAA}  \big\{|b|+\|\si\| +|\bar b|+\|\bar \si\| \big\}(t,x,\mu) \le K(t) (1+|x|+\|\mu\|_2),\ \ (t,x,\mu)\in [0,\infty)\times \R^d\times \scr P_2.\end{equation}
    Then for any $f\in  C^{2,(1,1)}(\R^d\times \scr P_2)$ and   $s\in [0,T)$,
 $$\d f (X_{s,t}^{\mu,x}, P_{s,t}^*\mu) = \tilde{\HL}_T f(X_{s,t}^{\mu,x}, P_{s,t}^*\mu)\d t + \<\nn f(\cdot, P_{s,t}^*\mu)(X_{s,t}^{\mu,x}), \si(t, X_{s,t}^{\mu,x},P_{s,t}^*\mu)\d W_t\>,\ \ t\in [s,T].$$
 \end{lem}
 \beg{proof} It is easy to see that  \eqref{AAA}  implies \eqref{44**AD} for
 $$\aa_t:= b(t,X_t, P_{s,t}^*\mu),\ \ \bb_t:=\si(t,X_t,P_{s,t}^*\mu)$$ for $X_t$ solving \eqref{eq:1.8} from time $s$ with $\L_{X_s}=\mu$. By Lemma \ref{44LLN}
 and the definition of $\HL_t$  in \eqref{eq:1.7},  for any $z\in\R^d$ we have
 $$\d f(z, P_{s,t}^*\mu)= \HL_t f(z, P_{s,t}^*\mu)\d t,\ \ t\ge s.$$ Combining this with  It\^o's formula for $X_{s,t}^{x,\mu}$ in \eqref{44E2}, we obtain
 \beg{align*} \d f(X_{s,t}^{\mu,x}, P_{s,t}^*\mu)& = \big\{\d f(z, P_{s,t}^*\mu)\big\}\big|_{z=X_{s,t}^{\mu,x}} + \big\{\d f(X_{s,t}^{\mu,x},\nu)\big\}\big|_{\nu= P_{s,t}^*\mu}\\
 &=  \HL_t f(X_{s,t}^{\mu,x}, P_{s,t}^*\mu)\d t + \<\nn f(\cdot, P_{s,t}^*\mu)(X_{s,t}^{\mu,x}), \si(t, X_{s,t}^{\mu,x},P_{s,t}^*\mu)\d W_t\>.\end{align*}  \end{proof}

We are now ready to prove Theorem \ref{44T3.1} as follows.

\beg{proof}[Proof of Theorem \ref{44T3.1}] {\bf (a)} We first prove that $u$ in \eqref{44FMM} solves \eqref{44PDE}. Let $(t,x,\mu)\in [0,T]\times \R^d\times\scr P_2$. For any $\vv\in (0,T-t)$ we have
\beg{align*} u(t,x,\mu)&:= \E\bigg[\Phi(X_{t,T}^{x,\mu}, P_{t,T}^*\mu)\e^{\int_t^T\V(r,X_{t,r}^{\mu,x}, P_{t,r}^*\mu)\d r} +\int_t^T \e^{\int_t^r \V(\theta, X_{t,\theta}^{\mu,x}, P_{t,\theta}^*\mu)\d \theta}\f(r, X_{t,r}^{x,\mu}, P_{t,r}^*\mu)\d r\bigg]\\
&= I_1+I_2,\end{align*} where
\beg{align*} &I_1:=   \E\bigg[\Phi(X_{t,T}^{\mu,x}, P_{t,T}^*\mu)\e^{\int_{t+\vv}^T\V(r,X_{t,r}^{\mu,x}, P_{t,r}^*\mu)\d r} +\int_{t+\vv}^T \e^{\int_{t+\vv}^r \V(\theta, X_{t,\theta}^{\mu,x}, P_{t,\theta}^*\mu)\d \theta} \f(r, X_{t,r}^{\mu,x}, P_{t,r}^*\mu)\d r\bigg],\\
&I_2:= \E\bigg[\Phi(X_{t,T}^{\mu,x}, P_{t,T}^*\mu)\Big\{\e^{\int_t^T\V(r,X_{t,r}^{\mu,x}, P_{t,r}^*\mu)\d r}- \e^{\int_{t+\vv}^T\V(r,X_{t,r}^{\mu,x}, P_{t,r}^*\mu)\d r} \Big\} \\
&\qquad \qquad +\int_t^{t+\vv}\e^{\int_t^r \V(\theta, X_{t,\theta}^{\mu,x}, P_{t,\theta}^*\mu)\d \theta}  \f(r, X_{t,r}^{\mu,x}, P_{t,r}^*\mu)\d r\\
&\qquad \qquad+ \int_{t+\vv}^T \Big\{\e^{\int_{t}^r \V(\theta, X_{t,\theta}^{\mu,x}, P_{t,\theta}^*\mu)\d \theta} - \e^{\int_{t+\vv}^r \V(\theta, X_{t,\theta}^{\mu,x}, P_{t,\theta}^*\mu)\d \theta}\Big\} \f(r, X_{t,r}^{\mu,x}, P_{t,r}^*\mu)\d r\bigg].\end{align*}
By the Markov property of $(X_{t,r}^{\mu,x}, P_{t,r}^*\mu)_{r\in [t,T]}$, we obtain
\beg{align*} I_1&= \E\bigg\{\E\bigg[\Phi(X_{t+\vv,T}^{\nu,y}, P_{t+\vv,T}^*\nu)\e^{\int_{t+\vv}^T \V(r,X_{t+\vv,r}^{\nu,y}, P_{t+\vv,r}^*\nu)\d r}\\
&\qquad \qquad + \int_{t+\vv}^T \f(r,X_{t+\vv,r}^{\nu,y}, P_{t+\vv,r}^*\nu)\e^{\int_{t+\vv}^r \V(\theta,X_{t+\vv,\theta}^{\nu,y}, P_{t+\vv,\theta}^*\nu)\d\theta}\d r\bigg]\bigg|_{(y,\nu)=(X_{t,t+\vv}^{\mu,x}, P_{t,t+\vv}^*\mu)}\bigg\}\\
&= \E u(t+\vv, X_{t,t+\vv}^{\mu,x}, P_{t,t+\vv}^*\mu).\end{align*} Combining this with Lemma \ref{44LM3} and Lemma \ref{44LM2}, we arrive at
\beg{align*} I_1  = u(t+\vv, x,\mu)+\E \int_t^{t+\vv} \tilde{\HL}_r u(t+\vv, \cdot,\cdot)(X_{t,r}^{\mu,x}, P_{t,r}^*\mu) \d r.\end{align*}
Noting that $u(t,x,\mu)= I_1+I_2$,     $b,\si,\Phi$ and $ \f$ are continuous with linear growth,  $\V$ is continuous and bounded, and $u\in C_b^{0,2,(1,1)}([0,T]\times \R^d\times \scr P_2)$, we may apply the dominated convergence theorem to derive
\beg{align*} &\lim_{\vv\downarrow 0}\ff{u(t,x,\mu)- u(t+\vv,x,\mu)}\vv \\
&=  \lim_{\vv\downarrow 0}\ff 1 \vv \E\bigg\{\int_t^{t+\vv} \tilde{\HL}_r u(t+\vv, \cdot,\cdot)(X_{t,r}^{\mu,x}, P_{t,r}^*\mu) \d r + I_2\bigg\}\\
&= \tilde{\HL}_t u(t,\cdot,\cdot)(x,\mu) + (u\V)(t,x,\mu)+ \f(t,x,\mu).\end{align*}
Therefore, $u$ solves \eqref{44PDE}, and $\pp_t u$ is continuous on $[0,T]\times \R^d\times \scr P_2)$, so that by Lemma \ref{44LM3} and the definition, we have $u\in C_b^{1,2,(1,1)}([0,T]\times \R^d\times\scr P_2)$.

{\bf (b)} Let $u\in    C^{1,2,(1,1)}([0,T]\times \R^d\times\scr P_2)$  be a solution to \eqref{44PDE}, we prove that it satisfies \eqref{44FMM}. Indeed, let
$$\eta_t= u(t, X_{s,t}^{\mu,x}, P_{s,t}^*\mu)\e^{\int_s^t \V(r,X_{s,r}^{\mu,x}, P_{s,r}^*\mu)\d r} + \int_s^t \f(r,X_{s,r}^{\mu,x}, P_{s,r}^*\mu)
\e^{\int_s^r \V(\theta,X_{s,\theta}^{\mu,x}, P_{s,\theta}^*\mu)\d \theta}\d r,\ \ t\in [0,T].$$ By Lemma \ref{44LM2} and \eqref{44PDE}, for any $s\in [0,T)$,  we have
\beg{align*} \d \eta_t
&=\Big\{(\pp_t+\tilde{\HL}_t) u(t, \cdot,\cdot)(X_{s,t}^{\mu,x}, P_{s,t}^*\mu) + (u\V)(t,X_{s,t}^{\mu,x}, P_{s,t}^*\mu)+\f(t,X_{s,t}^{\mu,x}, P_{s,t}^*\mu)\\
&\qquad \qquad \qquad  + \big\< \nn u(t, \cdot, P_{s,t}^*\mu)(X_{s,t}^{\mu,x}), \si(t, X_{s,t}^{\mu,x}, P_{s,t}^*\mu) \d W_t\big\>\Big\}\e^{\int_s^t \V(r,X_{s,r}^{\mu,x}, P_{s,r}^*\mu)\d r}\\
&= \e^{\int_s^t \V(r,X_{s,r}^{\mu,x}, P_{s,r}^*\mu)\d r}\big\< \nn u(t, \cdot, P_{s,t}^*\mu)(X_{s,t}^{\mu,x}), \si(t, X_{s,t}^{\mu,x}, P_{s,t}^*\mu) \d W_t\big\>,\ \ \ t\in [s,T].\end{align*}
 Therefore, for any $s\in [0,T]$,
\beg{align*} &u(s,x,\mu)= \E \eta_s=\E\eta_T\\
&= \E\bigg\{u(T, X_{s,T}^{\mu,x}, P_{s,T}^*\mu)\e^{\int_s^T \V(r,X_{s,r}^{\mu,x}, P_{s,r}^*\mu)\d r} + \int_s^T \f(r,X_{s,r}^{\mu,x}, P_{s,r}^*\mu)
\e^{\int_s^r \V(\theta,X_{s,\theta}^{\mu,x}, P_{s,\theta}^*\mu)\d \theta}\d r\bigg\},\end{align*}
that is, $u$ satisfies \eqref{44FMM}.
\end{proof}
\appendix
\section{ A natural tangent bundle over $\scr P $}
It is well-known how to identify natural tangent bundles over $``$manifold-like" state spaces $M$ and their corresponding gradients. For this one has to fix a large enough space of $``$smooth" real-valued functions $\scr F$ ($``$test functions") on $M$ and for each $x\in M$ a set of $``$suitable curves" $\gamma^x:[0,1]\rightarrow M,\ \gamma(0)=x$ along which we can differentiate $t\mapsto f(\gamma^x(t))$ at $t=0$ for all $f\in\scr F$. This construction has been performed in \cite{RWAKR}, \cite{AKR2} (see also \cite{RWAKR2}, \cite{RWAKR157}, \cite{MR}, \cite{RWORS}, \cite{RWOR}, \cite{RSAM}, \cite{RWR98}) with the space $M$ being the space $\Gamma$ of all $\mathbb{Z}_+$-valued Radon-measures on a Riemannian manifold, i.e. $\Gamma$ is the \textit{configuration space} over this manifold. The space $\scr F$ there consists of so-called finitely based smooth functions on $\Gamma$. It turns out that in this case the resulting tangent bundle $(T_\mu\Gamma)_{\mu\in\Gamma}$ consists of linear spaces $T_\mu\Gamma$ given by $\mu$-square integrable sections over the underlying Riemannian manifold. Let us now present the completely analogous construction in the case we are concerned with in this paper, where the underlying Riemannian manifold is $\R^d$ and $\Gamma$ is replaced by $\scr P(\R^d) =:\scr P$. Define (see \eqref{44FC})
\begin{align*}
\scr F:=\scr F C_b^2(\scr P)
\end{align*}
and for $\mu\in\scr P $, $\phi\in L^2(\R^d\rightarrow\R^d,\mu):$
\begin{align}\label{A.1}
\gamma_\phi^\mu(t):=\mu\circ (\operatorname{Id}+t\phi)^{-1},\quad t\geq0.
\end{align}
So, in our case the set of $``$suitable curves" starting at $\mu\in\scr P $ are labelled by $\phi\in L^2(\R^d\rightarrow\R^d,\mu)$.\\\\
\underline{Claim:} The resulting tangent bundle over $\scr P $ is $(T_\mu\scr P)_{\mu\in\scr P}:=(L^2(\R^d\rightarrow\R^d,\mu))_{\mu\in\scr P}$\begin{proof}
Let $F\in\scr F=\scr FC^2_b(\scr P)$ and $\mu\in\scr P$. Then
\begin{align}\label{eq:A.1}
F(\mu)=f(\mu(h_1),\dots,\mu(h_n)),
\end{align}
for some $n\in\N$, $h_1,\dots h_n\in C_b^2(\R^d)$, $f\in C^1_b(\R^n)$ and thus by the chain rule for all $\phi\in L^2(\R^d\rightarrow\R^d,\mu)$
\begin{align}\label{eq:A.2}
\frac{\d}{\d t}F&(\gamma_\phi^\mu(t))_{|t=0}\notag\\
&=\sum_{i=1}^n(\partial_if)(\mu(h_1),\dots,\mu(h_n))\mu(\langle\nabla h_i,\phi\rangle_{\R^d})\notag\\
&=\Big\langle\sum_{i=1}^n(\partial_if)(\mu(h_1),\dots,\mu(h_n))\nabla h_i,\phi\rangle_{L^2(\R^d\rightarrow\R^d,\mu)}
\end{align}
\end{proof}
Define the corresponding gradient for $F\in\scr FC_b^2(\scr P)$ by
\begin{align}\label{eq:A.22}
\nabla^\scr P F(\mu):=\sum_{i=1}^n(\partial_if)(\mu(h_1),\cdots,\mu(h_n))\nabla h_i\in L^2(\R^d\rightarrow\R^d,\mu).
\end{align}
Then by \eqref{eq:A.2} we have for all $\phi\in L^2(\R^d\rightarrow\R^d,\mu)$
\begin{align}\label{eq:A.3}
\frac{\d}{\d t}F(\gamma_\phi^\mu(t))_{|{t=0}}=\langle\nabla^\scr PF(\mu),\phi\rangle_{L^2(\R^d\rightarrow\R^d,\mu)}.
\end{align}
In particular, the definition of $\nabla^\scr PF$ is independent of the particular representation of F in \eqref{eq:A.2}.
\begin{rem}
We note that for $F\in\scr FC_b^2(\scr P)$ we have that $\nabla^\scr PF$ is bounded on $\R^d$. So, the right hand side  of \eqref{eq:A.3}  is well-defined also if merely $\phi\in L^1(\R^d\rightarrow\R^d,\mu)$. For simplicity of notation and extending the inner product we keep the notation
\begin{align*}
\int\langle\nabla^\scr PF(\mu),\phi\rangle_{\R^d}d\mu=:\langle\nabla^\scr PF(\mu),\phi\Big\rangle_{L^2(\R^d\rightarrow\R^d,\mu)}
\end{align*}
also in this case in the entire paper.
\end{rem}

\section*{Acknowledgements} We would like to thank Marco Rehmeier for pointing out serveral misprints and small errors in an earlier version of this paper.

\end{document}